\newtheorem{thm}{Theorem}[section]
\newtheorem{cor}[thm]{Corollary}
\newtheorem{lem}[thm]{Lemma}
\newtheorem{prop}[thm]{Proposition}
\theoremstyle{definition}
\newtheorem{defn}[thm]{Definition}
\newtheorem{rem}[thm]{Remark}
\newtheorem*{defn*}{Definition}
\newtheorem*{rems*}{Remarks}
\newtheorem*{rem*}{Remark}
\newtheorem{alg}[thm]{Algorithm}
\numberwithin{equation}{section}
\newcommand{\Eq}{{\text{E}}}
\newcommand{\M}{{M}}
\newcommand{\Css}{{\text{CSS}}}
\newcommand{\C}{{C}}
\renewcommand{\d}{\mathrm{d}}
\newcommand{\minm}{\mathbbm{m}}
\newcommand{\maxm}{\mathbb{M}}
\newcommand{\overarc}[2]{\underline{#1\frown #2}}
\newcommand{\p}{p}
\begin{document}

\title[The geometry of the Wigner caustic] {The Geometry of the Wigner Caustic and a~Decomposition of a Curve into Parallel Arcs}
\author{Wojciech Domitrz, Micha\l{} Zwierzy\'nski}
\address{Faculty of Mathematics and Information Science\\
Warsaw University of Technology\\
ul. Koszykowa 75, 00-662 Warszawa\\
Poland
\\}

\email{domitrz@mini.pw.edu.pl, zwierzynskim@mini.pw.edu.pl}
\thanks{The work of W. Domitrz and M. Zwierzy\'nski was partially supported by NCN grant no. DEC-2013/11/B/ST1/03080. }

\subjclass[2010]{53A04, 53A15, 58K05, 81Q20.}

\keywords{semiclassical dynamics, affine equidistants, Wigner caustic, singularities, planar curves}

\begin{abstract}
In this paper we study global properties of the Wigner caustic  of parameterized closed planar curves. We find new results on its geometry and singular points. In particular, we consider the Wigner caustic of rosettes, i.e. regular closed parameterized curves with non-vanishing curvature.

We present a decomposition of a curve into parallel arcs to describe smooth branches of the Wigner caustic. By this construction we can find the number of smooth branches, the rotation number, the number of inflexion points and the parity of the number of cusp singularities of each branch. We also study the global properties of the Wigner caustic on shell (the branch of the Wigner caustic connecting two inflexion points of a curve). We apply our results to whorls -- the important object to study the dynamics of a quantum particle in the optical lattice potential.
\end{abstract}

\maketitle

\section{Introduction}

In 1932  Eugene Wigner introduced the celebrated Wigner function   to study quantum corrections to classical statistical mechanics (\cite{Wigner}).This function relates the wavefunction that appears in Schrödinger's equation to a probability distribution in phase space. The Wigner function of a pure state  is defined in the following way
$$
{\mathcal W}_{\psi}(p,q)=\frac{1}{\pi\hbar}\int^{\infty}_{-\infty} \psi^*(q-\zeta)\psi(q+\zeta)\exp{(2ip\zeta/\hbar)} \, \d\zeta,
$$
where $(p,q)\in \mathbb R^2$ are momentum and position, and $\psi\in L^2_{\mathbb C}(\mathbb R)$ is the wavefunction.
In  \cite{B1} M. Berry studied  the semiclassical limit of Wigner's phase-space representation of quantum states. He proved that for $1$-dimensional systems, that correspond to  smooth (Lagrangian) curves $M$ in the phase space $(\mathbb R^2, \omega=\d p \wedge \d q)$,  the semiclassical limit of the Wigner function of the classical correspondence $\M$ of a pure quantum state takes on high values at points in a neighborhood of $\M$ and also in a neighborhood of a singular closed curve, which is called the Wigner caustic of $\M$  or the Wigner catastrophe (see \cite{B1, OH, DMR1, BW} for details). Geometrically the Wigner caustic of a planar curve $M$ is the locus of midpoints of chords connecting points on $\M$ with parallel tangent lines (\cite{B1, DMR1, DR1, OH}). It is caustic of a
Lagrangian map defined in the following way (see \cite{OH, DR1,DMR1} for details).

For the  the canonical symplectic form $\displaystyle\omega=\mathrm{d}p\wedge \mathrm{d}q$ on $\mathbb{R}^2$ the map $\flat: T\mathbb{R}^{2}\ni v\mapsto \omega(v, \cdot)\in T^\ast\mathbb{R}^{2}$ is an isomorphism between the bundles $T\mathbb{R}^{2}$ and $T^\ast\mathbb{R}^{2}$.  Then $\displaystyle\dot{\omega}=\flat^\ast\mathrm{d}\alpha=\mathrm{d}\dot{p}\wedge \mathrm{d}q+\mathrm{d}p\wedge\mathrm{d}\dot{q}$ is a symplectic form on $T\mathbb{R}^{2}$, where  $\alpha$ be the canonical Liouville $1$-form on $T^\ast\mathbb{R}^2$
 The linear diffeomorphism $\Phi_{\frac{1}{2}}:\mathbb{R}^{2}\times\mathbb{R}^{2}\to T\mathbb{R}^2=\mathbb{R}^{2}\times\mathbb{R}^{2},$
\begin{align*}
\Phi_{\frac{1}{2}}(p^+, q^+, p^-, q^-)=(p,q,\dot{p},\dot{q})=\frac{1}{2}\left(p^++p^-, q^++q^-,p^+-p^-, q^+-q^-\right)
\end{align*}
pullbacks  the symplectic form $\dot{\omega}$ on $T\mathbb{R}^{2}$ the canonical symplectic form $\frac{1}{2}(\pi_+^\ast\omega-\pi_-^\ast\omega)$ on the product $\mathbb{R}^{2}\times\mathbb{R}^{2}$, where $\pi_+, \pi_-:\mathbb{R}^{2}\times\mathbb{R}^{2}\to\mathbb{R}^{2}$ are the projections on the first and on the second component, respectively. If $M$ is a smooth regular planar curve then $M$ is an immersed Lagrangian submanifold of $(R^2, \omega)$. Hence $\Phi_{\frac{1}{2}}(M\times M)$ is an immersed  Lagrangian submanifold of $(T\mathbb{R}^{2}, \dot{\omega})$. Let $\pi_1, \pi_2:T\mathbb{R}^2=\mathbb{R}^{2}\times\mathbb{R}^{2}\to\mathbb{R}^{2}$ be the projections on the first and on the second component, respectively. Then $\pi_1$ and $\pi_2$ define Lagrangian fibre bundles with the symplectic structure $\dot{\omega}$. Then the caustic of the Lagrangian map (the set of its critical values) $\pi_1\circ\Phi_{\frac{1}{2}}\big|_{M\times M}$ is the Wigner caustic \cite{OH, DR1, DMR1, CDR1, CDR2}. On the other hand the Lagrangian map  $\pi_2\circ\Phi_{\frac{1}{2}}\big|_{M\times M}$ is the secant map of $M$ \cite{DRZ1}.  If $M$ is (locally) described as 
$$M=\left\{(p,q)\in \mathbb R^2\,|\, p=\frac {\d S}{\d q}(q)\right\}$$ 
then the generating family of the Lagrangian submanifold $\Phi_{\frac{1}{2}}(M\times M)$ has the following form
$$
F(p,q,\beta)=\frac{1}{2}\left(S(q+\beta)-S(q-\beta)\right)-p\beta.
$$
The front of the Legendrian submanifold of the contact manifold $(T{\mathbb R}^2\times \mathbb R, \d z+\flat^{\ast}\alpha)$
generated by $F$ is a singular 2-dimensional improper affine sphere, where $z$ is a coordinate on $\mathbb R$. The caustic of this front is composed of the curve $M$ and its Wigner caustic. Hence the geometry of the Wigner caustics provides information on singularities of improper affine spheres. In Fig. \ref{FigIntroIASProjection} we present a non-convex planar curve with its Wigner caustic and in Fig. \ref{FigIntroIASBeans} we show the improper affine sphere generated by $M$ in the construction described above (see \cite{CDR1, CDR2} for details).  

\begin{figure}[h]
\centering
\includegraphics[scale=0.30]{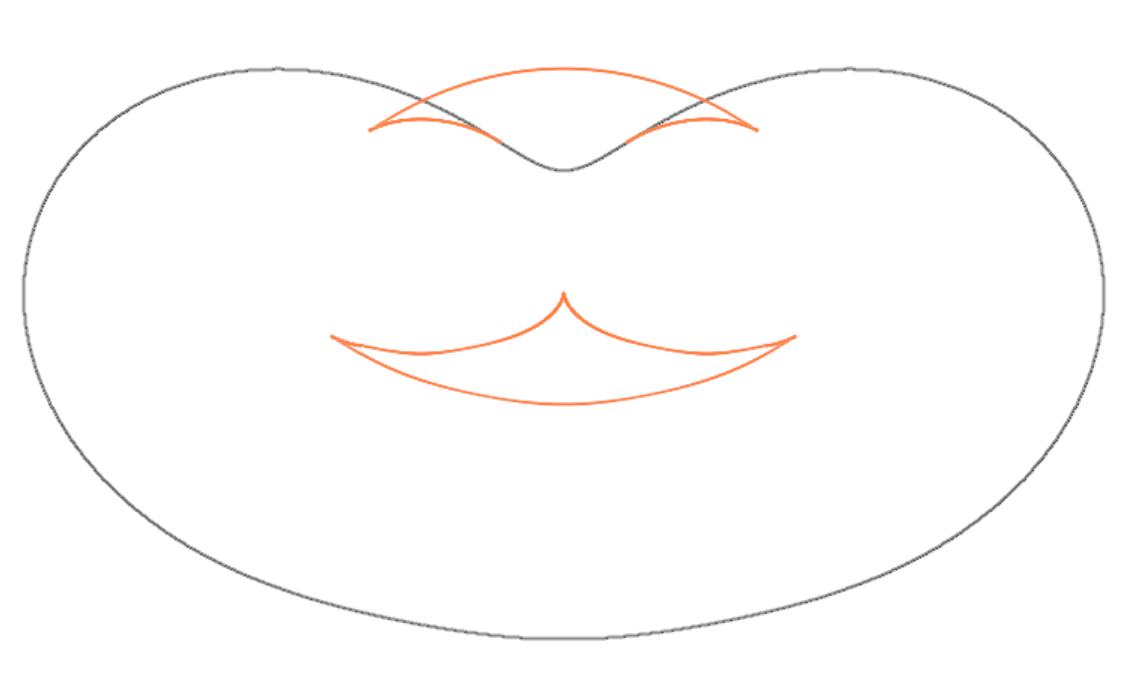}
\caption{A non-convex curve together with its Wigner caustic}
\label{FigIntroIASProjection}
\end{figure}

\begin{figure}[h]
\centering
\includegraphics[scale=0.15]{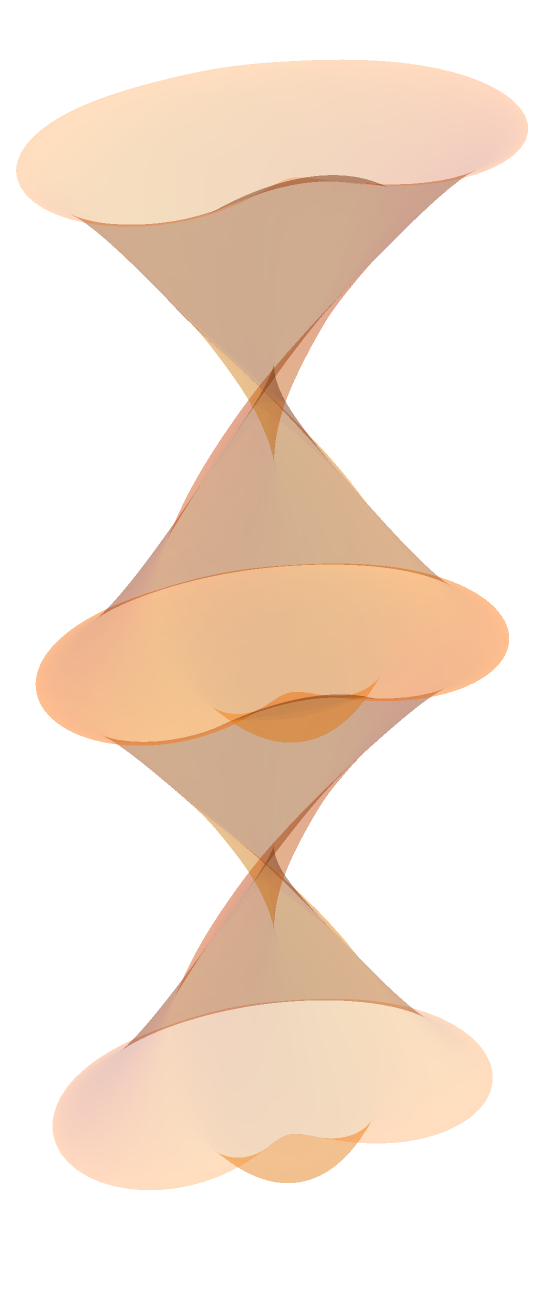}
\includegraphics[scale=0.15]{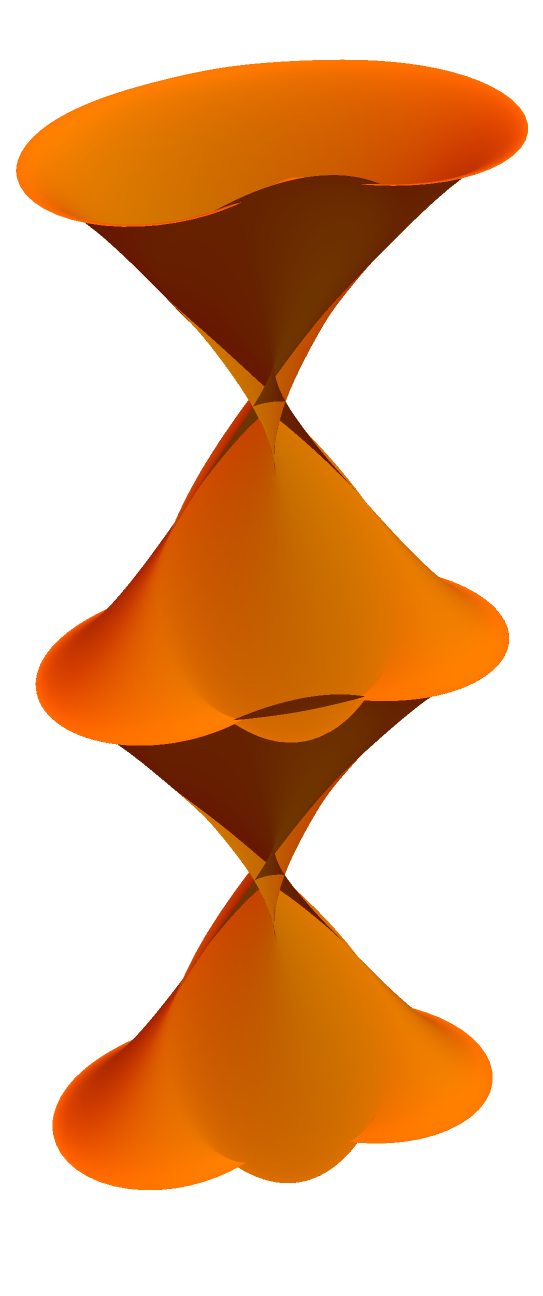}
\includegraphics[scale=0.15]{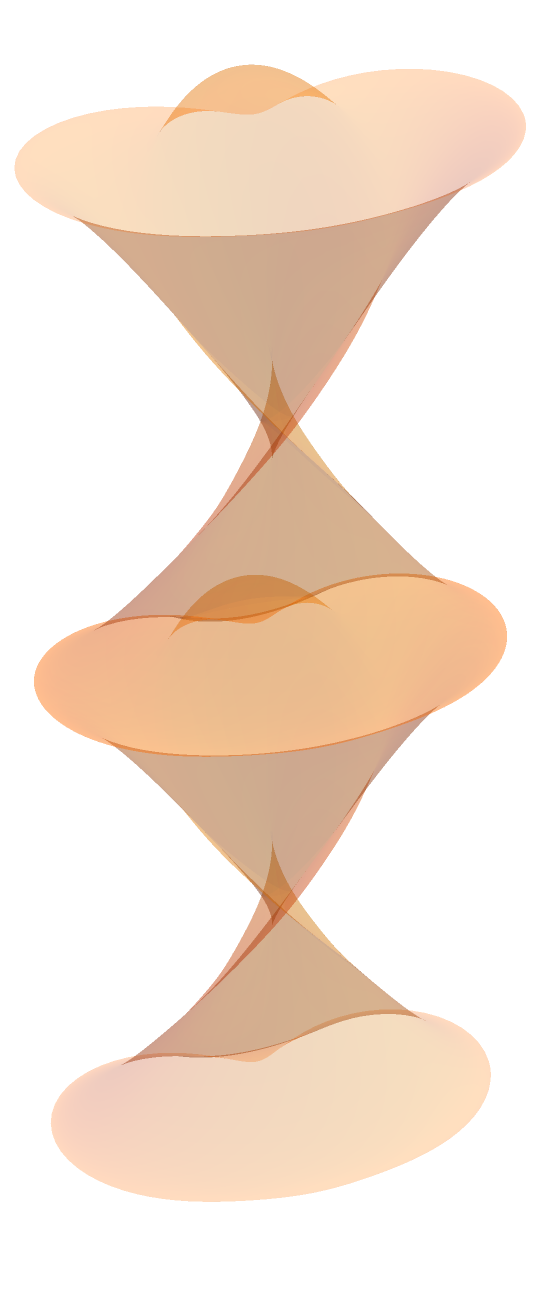}
\includegraphics[scale=0.15]{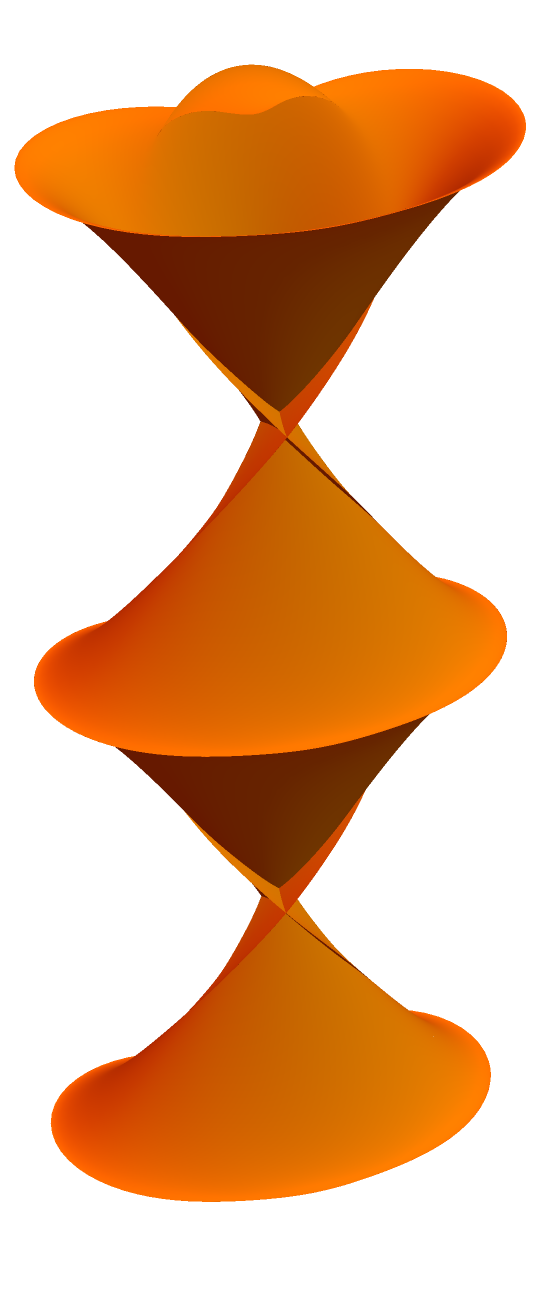}
\caption{An improper affine sphere (with different opacities) generated from a curve in Fig. \ref{FigIntroIASProjection}}
\label{FigIntroIASBeans}
\end{figure}

 In \cite{CPB} (see also \cite{BO, BW, BB}) the dynamics of a quantum particle in the optical lattice potential was investigated. The authors analyze evolution of the Wigner function. The function undergoes a number of catastrophic changes. For a semiclassical approximation the Wigner caustic  consists of the rainbow diagram (the original curve $M$) and a locus of midpoints of chords joining points on rainbow diagram  with parallel tangent lines. But the catastrophe set of exact Wigner function, in addition, contains a locus of midpoints of chords joining points on neighboring rainbow diagrams with parallel tangents. Hence the Wigner caustic of the curve $M$ should be investigated not only locally but globally too.  It turn out that its global geometry is very important for understanding of the quantum-classical correspondence breakdown. It allows to extract important information without using simplifying approximations.  
   
 Singularities of the Wigner caustic for ovals occur exactly from an antipodal pair (the tangent lines at the two points are parallel and the curvatures are equal). The well-known Blaschke-S\"uss theorem states that there are at least three pairs of antipodal points on an oval (\cite{G1, L1}).  The absolute value of the oriented area of the Wigner caustic gives the exact relation between the perimeter and the area of the region bounded by closed regular curves of constant width and improves the classical isoperimetric inequality for convex curves (\cite{Zhang1, Zhang2, Z2, Z3, Z4}). Furthermore this oriented area improves the isoperimetric defect in the reverse isoperimetric inequality (\cite{CGR-Hurwitz-ineq}). Recently the properties of the middle hedgehog, which is a generalization of the Wigner caustic for non-smooth convex curves,  were studied in \cite{S2, S3}. The Wigner caustic in the literature regarding hedgehogs is known also as a projective hedgehog (see \cite{MMY1, MMY2} and the literature therein).  The Wigner caustic could be generalized to obtain an affine $\lambda$-equidistant, which is the locus of points of the above chords which divide the chord segments between base points with a fixed ratio $\lambda$. The singular points of affine equidistants create the Centre Symmetry Set, the natural generalization of the center of symmetry, which is widely studied in \cite{DR1, GH1, GR1, GZ1, J1}. The geometry of an affine extended wave front, i.e. the set $\displaystyle\bigcup_{\lambda\in[0,1]}\{\lambda\}\times E_{\lambda}(M)$, where $E_{\lambda}(M)$ is an affine $\lambda$-equidistant of a~manifold $M$, was studied in \cite{DR1, DZ-gaussbonnet}.

Local properties of singularities of the Wigner caustic and affine equidistants were studied in many papers \cite{CDR1, DJRR1, DMR1, DR1, DRR1, GWZ1, JJR1, OH}. In this paper we study global properties of the Wigner caustic of a generic planar closed curve. In \cite{B1} Berry proved that if $\M$ is a convex curve, then generically the Wigner caustic is a parametrized connected curve with an odd number of cusp singularities and this number is not smaller than $3$. It is not true in general for any closed planar curve. If $\M$ is a~parametrized closed curve with self-intersections or inflexion points then the Wigner caustic has at least two branches (smoothly parametrized components). We present a decomposition of a curve into parallel arcs and thanks to this decomposition we are able to describe the geometry of branches of the Wigner caustic. In general the geometry of the Wigner caustic of a regular closed curve is quite complicated (see Fig. \ref{FigIntro}).

\begin{figure}[h]
\centering
\includegraphics[scale=0.3]{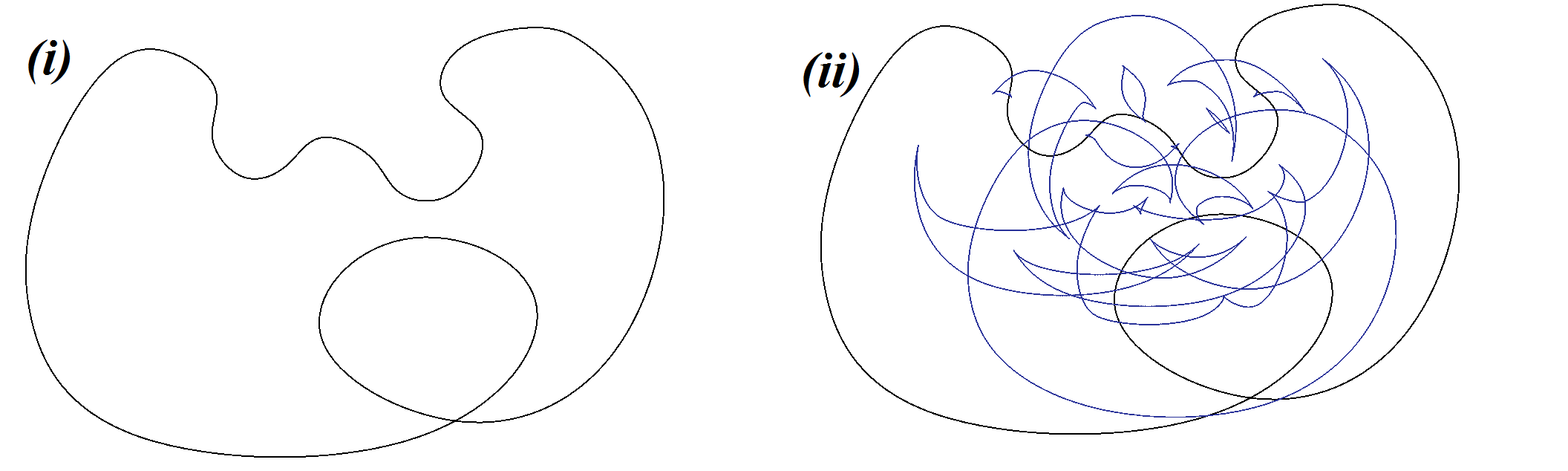}
\caption{(i) A closed regular curve $\M$, (ii) $\M$ and $\Eq_{\frac{1}{2}}(\M)$}
\label{FigIntro}
\end{figure}

In Section \ref{SecGeneralProperties} we briefly sketch the known results on the Wigner caustic and affine equidistants. 

Section \ref{SectionAlgorithm} contains the algorithm to describe branches of the Wigner caustic and affine equidistants of any generic regular parameterized closed curve. Subsection \ref{SectionExample} provides an example of an application of this algorithm to a particular curve.

In the beginning of Section \ref{SectionOnShellGeometryOfConvex} we present global propositions on the number of cusps and inflexion points of the Wigner caustic. We show that the procedure based on a decomposition presented in Section \ref{SectionAlgorithm} can be applied to obtain the number of branches of the Wigner caustic, the number of inflexion points and the parity of the number of cusp singularities of each branch. After that we study global properties of the Wigner caustic on shell, i.e. the branch of the Wigner caustic which connects two inflexion points of a curve. We  present the results on the parity of the number of cusp points of the branches of the Wigner caustic on shell. We also prove that each such branch has even number of inflexion points and there are even number of inflexion points on a path of the original curve between the endpoints of this branch. 

In Section \ref{SectionRosettes} we use the decomposition introduced in Section \ref{SectionAlgorithm} to study the geometry of the Wigner caustic of generic regular closed parameterized curves with non-vanishing curvature and of some generic regular closed parameterized curves with two inflexion points.

All the pictures of the Wigner caustic in this manuscript were made in the application created by the second author \cite{ZwMsc} and in Mathematica \cite{mathematica}.


\section{Preliminaries}\label{SecGeneralProperties}

Let $\M$ be a smooth parameterized curve on the affine plane $\mathbb{R}^2$, i.e. the image of the $C^{\infty}$ smooth map from an interval to $\mathbb{R}^2$. A smooth curve is \textit{closed} if it is the image of a $C^{\infty}$ smooth map from $S^1$ to $\mathbb{R}^2$. A~smooth curve is \textit{regular} if its velocity does not vanish. A regular curve is \textit{simple} if it has no self-intersection points. A~regular simple closed curve is \textit{convex} if its signed curvature has a constant sign. Let $(s_1,s_2)\ni s\mapsto f(s)\in\mathbb{R}^2$ be a parameterization of $\M$. A point $f(s_0)$ is a \textit{$C^k$ regular point} of $\M$ for $k=1, 2, \ldots$ or $k=\infty$ if there exists $\varepsilon>0$ such that $f\big((s_0-\varepsilon, s_0+\varepsilon)\big)$ is a $C^k$ smooth $1$-dimensional manifold. A point $f(s_0)$ is a \textit{singular point} if it is not $C^k$ regular for any $k>0$. A curve is \textit{singular} if it has at least one singular point. A singular point $p$ is called a \textit{cusp} if $\M$ is locally diffeomorphic at $p$ to a curve $(-1,1)\ni t\mapsto (t^2,t^3)\in\mathbb{R}^2$ at $0$. A point $f(s_0)$ is a~cusp of $M$ if and only if $f'(s_0)=0$ and the vectors $f''(s_0)$ and $f'''(s_0)$ are linearly independent. A point $f(s_0)$ is an \textit{inflexion point} of $\M$ if its signed curvature changes sign. An inflexion point $f(s_0)$ is \textit{non-degenerate} (or \textit{ordinary}) if $\det\big(f'(s_0),f'''(s_0)\big)\neq 0$ which means that the order of contact of $\M$ with the tangent line to $\M$ at $f(s_0)$ is equal to $2$. If the curvature vanishes at $f(s_0)$ but does not change sign, then this point is called an \textit{undulation point}.

\begin{rem}
Let $(a,b)\ni s\mapsto f(s)\in\mathbb{R}^2$ be a parameterization of $\M$, then the order of contact of $\M$ with the tangent line to $\M$ at $p=f(t)$ is $k$ if and only if
\begin{align}
\det\left[\frac{\mathrm{d}^if}{\mathrm{d}s^i}(t), \frac{\mathrm{d}f}{\mathrm{d}s}(t)\right]=0\ \text{for }i=1, 2, \ldots, k\text{ and }\det\left[\frac{\mathrm{d}^{k+1}f}{\mathrm{d}s^{k+1}}(t), \frac{\mathrm{d}f}{\mathrm{d}s}(t)\right]\neq 0.
\end{align}
\end{rem}

\begin{defn}\label{parallelpair}
A pair of points $a,b\in\M$ ($a\neq b$) is called a \textit{parallel pair} if the tangent lines to $\M$ at $a$ and $b$ are parallel.
\end{defn}

\begin{defn}\label{chord}
A \textit{chord} passing through a pair $a,b\in\M$, is the line:
$$l(a,b)=\left\{\lambda a+(1-\lambda)b\ \big|\ \lambda\in\mathbb{R}\right\}.$$
\end{defn}

Let $A$ be a subset of $\mathbb{R}^2$, then $\mbox{cl}{A}$ denotes the closure of $A$.

\begin{defn}\label{equidistantSet}
The \textit{Wigner caustic} of $\M$ is the following set
$$\Eq_{\frac{1}{2}}(\M)=\textrm{cl}{\left\{\frac{a+b}{2}\ \big|\ a,b \text{ is a parallel pair of } \M\right\}}.$$
\end{defn}

\begin{rem}
The Wigner caustic of $\M$ is an example of an affine $\lambda$-equidistant set of $\M$,
$$\Eq_\lambda(\M)=\textrm{cl}\left\{\lambda a+(1-\lambda)b\ \big|\ a,b\text{ is a parallel pair of }\M\right\},$$ where $\lambda=\frac{1}{2}$.
Definition \ref{equidistantSet} is different from definitions in papers \cite{CDR1, DMR1, DR1, DRR1, GR1, GWZ1, JJR1, Z1, Z2, Z3}. The closure in  the definition is needed to include inflexion points of $\M$ in $\Eq_{\lambda}(\M)$. For details see Remark \ref{RemInflPoint}.
\end{rem}

Note that, for any given $\lambda\in\mathbb{R}$, we have $\Eq_{\lambda}(\M)=\Eq_{1-\lambda}(\M)$. Thus, the case $\lambda=\frac{1}{2}$ is special. In particular we have $\Eq_0(\M)=\Eq_1(\M)=\M$ if $\M$ is closed.

\begin{defn}
The \textit{Centre Symmetry Set} of $\M$, denoted by $\Css(\M)$, is the envelope of all chords passing through parallel pairs of $\M$.
\end{defn}

Bitangent lines of $\M$ are parts of $\Css(\M)$ (\cite{DR1, GH1}). If $\M$ is a generic convex curve, then $\Css(\M)$, the Wigner caustic and $\Eq_{\lambda}(\M)$ for a generic $\lambda$ are smooth closed curves with at most cusp singularities (\cite{B1, GH1, GZ1, J1}), cusp singularities of all $\Eq_{\lambda}(\M)$ are on regular parts of $\Css(\M)$ (\cite{GZ1}), the number of cusps of $\Css(\M)$ and $\Eq_{\frac{1}{2}}(\M)$ is odd and not smaller than $3$ (\cite{B1, GH1}, see also \cite{G1}), the number of cusps of $\Css(\M)$ is not smaller than the number of cusps of $\Eq_{\frac{1}{2}}(\M)$ (\cite{DR1}).

Let us denote by $\kappa_{\M}(p)$ the signed curvature of a smooth regular curve $\M$ at $p$. Let $a, b$ be a parallel pair of $\M$. Assume that $\kappa_{\M}(b)\neq 0$. Let us fixed local arc length parameterizations of $\M$ nearby the points $a,b$ by $f:(s_0,s_1)\to\mathbb{R}_2$ and by $g:(t_0,t_1)\to\mathbb{R}^2$, respectively. Let's assume that the parameterizations at $a$ and $b$ are in opposite directions, i.e. the velocities at $a,b$ are opposite. Then there exists a function $t:(s_0,s_1)\to(t_0,t_1)$ such that
\begin{align}
f'(s)=-g'(t(s)).
\end{align}
It is easy to see that by the implicit function theorem the function $t$ is smooth and 
\begin{align}
t'(s)=\dfrac{\kappa_{\M}(f(s))}{\kappa_{\M}(g(t(s)))}.
\end{align}
Then by 
\begin{align}
\label{eq_WCNatural}\gamma_{\Eq_{\frac{1}{2}}}(s)=\dfrac{1}{2}\big(f(s)+g(t(s))\big)
\end{align}
we will denote a \textit{local natural parameterization of the Wigner caustic}. Whenever we will write about singular points of the Wigner caustic we will denote these points as the singular points of the parameterization given by \eqref{eq_WCNatural}.

 By direct calculations we get the following lemma.

\begin{lem}\label{LemParallelCurvature}
Let $\M$ be a regular curve. Let $a,b$ be a parallel pair of $\M$, such that $\M$ is parameterized at $a$ and $b$ in opposite directions and $\kappa_{\M}(b)\neq 0$. Let $p=\frac{a+b}{2}$ be a regular point of $\Eq_{\frac{1}{2}}(\M)$.
Then 
\begin{enumerate}[(i)]
\item the tangent line to $\Eq_{\frac{1}{2}}(\M)$ at $p$ is parallel to the tangent lines to $\M$ at $a$ and $b$.

\item the curvature of $\Eq_{\frac{1}{2}}(\M)$ at $p$ is equal to

$$\kappa_{\Eq_{\frac{1}{2}}(\M)}(p)=\frac{2\kappa_{\M}(a)|\kappa_{\M}(b)|}{\left|\kappa_{\M}(b)-\kappa_{\M}(a)\right|}.$$
\end{enumerate}
\end{lem}

Lemma \ref{LemParallelCurvature}(ii) implies the following propositions.

\begin{prop}\label{PropSingularPointOfEq} \cite{GZ1}
Let $a,b$ be a parallel pair of a regular curve $\M$, such that $\M$ is parameterized at $a$ and $b$ in opposite directions and one of $a$ and $b$ is not an inflexion point. Then the point $\frac{a+b}{2}$ is a singular point of $\Eq_{\frac{1}{2}}(\M)$ if and only if $\kappa_{\M}(a)=\kappa_{\M}(b)$.
\end{prop}

\begin{prop}\label{PropInflOfEq}
Let $a, b$ be a parallel pair of a regular closed curve $\M$. Then $\frac{a+b}{2}$ is an inflexion point of $\Eq_{\frac{1}{2}}(\M)$ if and only if one of the points $a$, $b$ is an inflexion point of $\M$.
\end{prop}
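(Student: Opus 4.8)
The plan is to establish both implications by a local analysis of $\Eq_{\lambda}(\M)$ near the points $\lambda a+(1-\lambda)b$ and $(1-\lambda)a+\lambda b$, splitting according to whether one of $a,b$ is an inflexion point of $\M$. I would use the standing genericity assumptions: inflexion points of $\M$ are nondegenerate (so $\kappa_{\M}'\neq 0$ there) and isolated, no two distinct inflexion points form a parallel pair (hence if $a$ is an inflexion point of the parallel pair $a,b$ then $\kappa_{\M}(b)\neq 0$), and near a point of nonzero curvature the tangent angle $\theta$ is a local coordinate on $\M$. Fix an orientation and an arc-length parameterization $f$ of $\M$, and recall that $f(s),f(u)$ form a parallel pair iff $\det\big(f'(s),f'(u)\big)=0$, and that at an inflexion point the normal component of $f''$ vanishes, so $f''\parallel f'$ there.

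For the ``only if'' direction I would prove the stronger contrapositive: if neither $a$ nor $b$ is an inflexion point of $\M$, then neither $\lambda a+(1-\lambda)b$ nor $(1-\lambda)a+\lambda b$ is an inflexion point of $\Eq_{\lambda}(\M)$. Since $\kappa_{\M}\neq 0$ near $a$ and near $b$, I parameterize the one-parameter family of parallel pairs near $(a,b)$ by the tangent angle $\theta$ of the first point, getting local branches $\alpha(\theta)$ near $a$ and $\beta(\theta)$ near $b$ of $\M$, with the corresponding arc of $\Eq_{\lambda}(\M)$ equal to $E(\theta)=\lambda\alpha(\theta)+(1-\lambda)\beta(\theta)$. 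Because the tangent lines at $\alpha(\theta)$ and $\beta(\theta)$ are parallel, both $\alpha'(\theta)$ and $\beta'(\theta)$ are nonzero scalar multiples of $(\cos\theta,\sin\theta)$, so $E'(\theta)=h(\theta)(\cos\theta,\sin\theta)$ for a smooth scalar $h$, and a one-line computation gives $\det\big(E'(\theta),E''(\theta)\big)=h(\theta)^{2}$. This vanishes exactly where $E'(\theta)=0$, i.e. at the cusps of $\Eq_{\lambda}(\M)$ (as in Proposition~\ref{PropSingularPointOfEq} in the opposite-direction case), which are not regular points and hence not inflexion points; on the regular part $\kappa_{\Eq_{\lambda}(\M)}\neq 0$. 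Applying the same argument to $(1-\lambda)a+\lambda b$ finishes this direction. (For opposite-direction pairs one may instead quote Lemma~\ref{LemParallelCurvature}(ii) directly, the curvature there being $\kappa_{\M}(a)|\kappa_{\M}(b)|/|\lambda\kappa_{\M}(b)-(1-\lambda)\kappa_{\M}(a)|\neq 0$.)

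For the ``if'' direction, by genericity it suffices to treat the case where $a=f(0)$ is an inflexion point and $b=f(u_{0})$ is not. Since $\kappa_{\M}(u_{0})\neq 0$ one has $\det\big(f'(0),f''(u_{0})\big)\neq 0$, so by the implicit function theorem the parallel-pair condition defines near $(0,u_{0})$ a smooth partner $u=u(s)$ with $u(0)=u_{0}$, and because $f''(0)=0$ one gets $u'(0)=0$. For the branch $E(s)=\lambda f(s)+(1-\lambda)f(u(s))$ of $\Eq_{\lambda}(\M)$ this gives $E'(0)=\lambda f'(0)\neq 0$ (so $\lambda a+(1-\lambda)b$ is a regular point of this branch) and $E''(0)=\lambda f''(0)+(1-\lambda)u''(0)f'(u_{0})$, which is parallel to $f'(0)$ because $f''(0)=0$ and $f'(u_{0})\parallel f'(0)$; hence $\det\big(E'(0),E''(0)\big)=0$, i.e. $\kappa_{\Eq_{\lambda}(\M)}$ vanishes at $\lambda a+(1-\lambda)b$. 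To see this is a genuine inflexion I would pass to the next jet: using $u'(0)=0$ one finds $\tfrac{d}{ds}\det(E',E'')\big|_{0}=\det\big(E'(0),E'''(0)\big)=\lambda^{2}\det\big(f'(0),f'''(0)\big)=\lambda^{2}\kappa_{\M}'(0)\neq 0$, so the curvature changes sign there. Running the identical computation with $\lambda$ and $1-\lambda$ interchanged --- on the branch $(1-\lambda)f(s)+\lambda f(u(s))$ through $(1-\lambda)a+\lambda b$, which corresponds to the partner pairs $\big(f(u(s)),f(s)\big)$ --- shows $(1-\lambda)a+\lambda b$ is an inflexion point of $\Eq_{\lambda}(\M)$ as well; the hypothesis $\lambda\neq 0,1$ is used precisely to keep $E'(0)$, $\det(E'(0),E'''(0))$ and their tilded analogues nonzero.

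The difficulty here is organizational rather than a single hard estimate. The points to get right are: (i) identifying which branches of $\Eq_{\lambda}(\M)$ pass through $\lambda a+(1-\lambda)b$ and $(1-\lambda)a+\lambda b$ when $a$ is an inflexion point, and noting that Lemma~\ref{LemParallelCurvature}(ii) is \emph{not} available there since its derivation presupposes nonzero curvature at both points of the pair --- so the vanishing of the curvature of $\Eq_{\lambda}(\M)$ must be obtained from the direct jet computation above; (ii) distinguishing a true inflexion from a point where the curvature merely touches zero, which is exactly where the nondegeneracy $\kappa_{\M}'\neq 0$ at inflexion points of $\M$ enters; and (iii) making sure both the same-direction and opposite-direction types of parallel pair are covered, which the tangent-angle computation handles uniformly in the ``only if'' part, while in the ``if'' part the sign of the parameterization of $\M$ near $b$ does not affect the conclusion $E''(0)\parallel f'(0)$.
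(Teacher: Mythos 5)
Your proof is correct, but it is considerably more self-contained than what the paper actually does: the paper offers no separate argument for this proposition and simply reads it off from the curvature formula of Lemma~\ref{LemParallelCurvature}(ii), where $\kappa_{\Eq_{\lambda}(\M)}(p)=\kappa_{\M}(a)|\kappa_{\M}(b)|\,/\,|\lambda\kappa_{\M}(b)-(1-\lambda)\kappa_{\M}(a)|$ vanishes exactly when one of $\kappa_{\M}(a)$, $\kappa_{\M}(b)$ does (note that the formula \emph{is} applicable with $b$ taken to be the non-inflexion point of the pair, since its hypothesis is only $\kappa_{\M}(b)\neq 0$ together with regularity of $p$, which Proposition~\ref{PropSingularPointOfEq} guarantees for $\lambda\neq 0,1$; so your caveat (i) is slightly overstated, though your caution is reasonable because the lemma is phrased only for opposite-direction pairs). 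What your argument buys beyond the paper's is twofold: the tangent-angle computation $\det(E',E'')=h^{2}$ gives a clean, orientation-independent proof of the ``only if'' direction (and en passant reproves that the curvature of a regular branch has constant sign), and the third-jet computation $\det(E'(0),E'''(0))=\lambda^{2}\kappa_{\M}'(0)\neq 0$ verifies that the curvature of $\Eq_{\lambda}(\M)$ genuinely changes sign at $\lambda a+(1-\lambda)b$ rather than merely touching zero --- a point the paper leaves implicit and which is exactly where the genericity of $\M$ (ordinary inflexions, no parallel pairs of inflexions) is needed. Both routes are sound; yours is longer but closes a small gap the paper glosses over.
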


Let $\tau_{p}$ denote the translation by a vector $p\in\mathbb{R}^2$.

\begin{defn}
A curve $\M$ is \textit{curved in the same side at $a$ and $b$} (resp. \textit{curved in the different sides}), where $a, b$ is a parallel pair of $\M$, if the center of curvature of $\M$ at $a$ and the center of curvature of  $\tau_{a-b}(\M)$ at $a=\tau_{a-b}(b)$ lie on the same side (resp. on the different sides) of the tangent line to $\M$ at $a$.
\end{defn}

We illustrate above definition in Fig. \ref{FigCurved}.

\begin{figure}[h]
\centering
\includegraphics[scale=0.5]{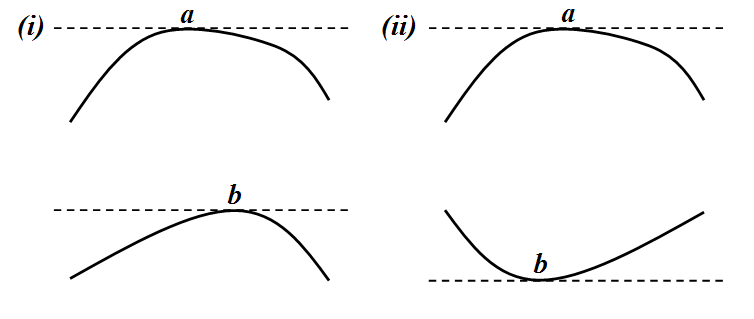}
\caption{(i) A curve curved in the same side at a parallel pair $a,b$, (ii) a curve curved in the different sides at a parallel pair $a,b$}
\label{FigCurved}
\end{figure}

\begin{cor}\label{PropRegularPointOfEq}
If $\M$ is curved in the same side at a parallel pair $a,b$, then $\frac{a+b}{2}$ is a regular point of the Wigner caustic of $\M$.
\end{cor}
\begin{proof}
Let us locally parameterize $\M$ at $a$ and $b$ in opposite directions. Then by Proposition \ref{PropSingularPointOfEq} a point $\frac{a+b}{2}$ is a singular point of $\Eq_{\frac{1}{2}}(\M)$ if and only if 
\begin{align}\label{FormulaPropRegularPointOfEq}
\displaystyle\frac{\kappa_{\M}(a)}{\kappa_{\M}(b)}=1.
\end{align}
The right hand side of (\ref{FormulaPropRegularPointOfEq}) is positive, then $\kappa_{\M}(a)$ and $\kappa_{\M}(b)$ have the same sign, therefore $\M$ is curved in the different sides at $a$ and $b$.

\end{proof}

We denote by $C^{\infty}(S^1,\mathbb{R}^2)$ the set of $C^{\infty}$ mappings from $S^1$ to $\mathbb{R}^2$, i.e. the set of smooth closed parameterized planar curves, and by $\cdot$ the dot product in $\mathbb{R}^2$.

\begin{rem}\label{RemSingCuspWC}
Let $f(s_1),f(s_2)$ be a parallel pair of $f\in C^{\infty}(S^1,\mathbb{R}^2)$. A singular point $\frac{f(s_1)+f(s_2)}{2}$ of the Wigner caustic of $f$ (that is a point for which $\kappa_f(s_1)=\kappa_f(s_2)$ and $f'(s_1)\cdot f'(s_2)<0$) is a cusp if and only if $\kappa_f'(s_1)\neq\kappa_f'(s_1)$, where $\kappa_f$ denotes the signed curvature of $f$ with respect to the parameterization of $f$, and $\kappa_f'$ denotes the derivative of the curvature with the respect to the arc length parameter (\cite{DMR1}).
\end{rem}

\begin{thm}
Let $\mathcal{G}$ be the subset of $C^{\infty}(S^1,\mathbb{R}^2)$ such that each curve $f$ in $\mathcal{G}$ satisfies the following conditions:
\begin{enumerate}[(i)]
\item $f$ is a regular curve with only non-degenerate inflexion points and no undulation points,
\item $f$ has only transversal self crossings,
\item if $f(s_1),f(s_2)$ is a parallel pair of $f$, then $f(s_1)$ or $f(s_2)$ is not an inflexion point,
\item if $f(s_1),f(s_2)$ is a parallel pair of $f$, the points $f(s_1),f(s_2)$ are not inflexion points of $f$, the dot product $f'(s_1)\cdot f'(s_2)$ is negative, and $\kappa_f(s_1)=\kappa_f(s_2)$, then $\kappa_f'(s_1)\neq \kappa_f'(s_1)$, where $\kappa'$ denote the derivative of the curvature with respect to the arc length parameter.
\end{enumerate}
Then $\mathcal{G}$ is a generic subset of $C^{\infty}(S^1,\mathbb{R}^2)$ with Whitney $C^{\infty}$ topology and the Wigner caustic of $f\in\mathcal{G}$ is the finite union of smooth curves with at most cusp singularities.
\end{thm}
\begin{proof} 

Since the intersection of two generic subsets is still a generic subset, it is enough to show that properties from each point are generic. The set of smooth regular closed curves is an open and dense subset of $C^{\infty}(S^1,\mathbb{R}^2)$ because the set of $1$-jets of smooth non-regular closed curves is a smooth submanifold of $J^1(S^1,\mathbb{R}^2)$ of codimension $2$. Let $f:S^1\to\mathbb{R}^2$ be smooth and regular. Having only non-degenerate inflexion points and no undulation points is equivalent to the following property:
\begin{align}
\label{GenOneCond}\det(f'(s),f''(s))=0\ \Rightarrow\ \det(f'(s),f'''(s))\neq 0.
\end{align}

Condition \eqref{GenOneCond} means that the map $j^3f:S^1\to J^3(S^1,\mathbb{R}^2)$ is transversal to the following submanifold of $J^3(S^1,\mathbb{R}^2)$:
$$\left\{j^3g(s)\in J^3(S^1,\mathbb{R}^2)\, \big|\, g'(s)\neq 0, \det\big(g'(s),g''(s)\big)=0\right\}.$$
By the Thom Transversality Theorem (e.g. see Theorem 4.9 in \cite{GGBook}) Property (i) is generic.

To prove generecity of the conditions (ii-iv) we will use the Thom Transversality Theorem for multijets (e.g. see Theorem 4.13 in \cite{GGBook} for details). We denote by $J^k_s(S^1,\mathbb{R}^2)$ the \textit{$s$-fold $k$-jet bundle} and by $(S^1)^{(2)}$ the set $\left(S^1\times S^1\right)\setminus\{(s,s)\ |\ s\in S^1\}$.

Genericity of (ii) follows from transversality of $j^1_2f:(S^1)^{(2)}\to J^1_2(S^1,\mathbb{R}^2)$
to the following submanifold of $J^1_2(S^1,\mathbb{R}^2)$:
$$\left\{(j^1g(s_1),j^1h(s_2))\in J^1_2(S^1,\mathbb{R}^2)\,\big|\, g(s_1)=h(s_2), g'(s_1)\neq 0, h'(s_2)\neq 0\right\}.$$
Transversality means that if $f(s_1)=f(s_2)$ for $s_1\neq s_2$, then $\det(f'(s_1),f'(s_2))\neq 0$. Therefore, Condition (ii) is generic.

Generecity of Property (iii) follows from transversality of the second multijet $j^2_2f:(S^1)^{(2)}\to J^2_2(S^1,\mathbb{R}^2)$ to the submanifold 
$$\left\{(j^2g, j^2h)\in J^2_2(S^1,\mathbb{R}^2)\,\big|\, \det\big(g'(s_1),h'(s_2)\big)=0, g'(s_1)\neq 0, h'(s_2)\neq 0\right\}.$$
This means that if $\det\big(f'(s_1),f'(s_2)\big)=0$ for $s_1\neq s_2$, then $\kappa_f^2(s_1)+\kappa_f^2(s_2)\neq 0$. Hence, Property (iii) is generic.

Now we assume that $f$ satisfies (iii). Genericity of Property (iv) follows from the transversality of  $j^3_2f:(S^1)^{(2)}\to J^3_2(S^1,\mathbb{R}^2)$ to the submanifold 
\begin{align*}
W:=\big\{(j^3g,j^3h)\in 
&
J^3_2(S^1,\mathbb{R}^2)\,\big|\, g'(s_1)\neq 0, h'(s_2)\neq 0, \det(g'(s_1), h'(s_2))=0,\\ &
g'(s_1)\cdot h'(s_2)<0, \kappa_g(s_1)=\kappa_h(s_2)\big\}.
\end{align*}
By direct calculations one can show that this means that if $j^3_2f(s_1,s_2)\in W$, then $\kappa'_f(s_1)\neq\kappa_f'(s_2)$, which is equivalent to the condition for cusp singularity in a singular point of the Wigner caustic (see Remark \ref{RemSingCuspWC}).

\end{proof}

From now one, when we will talk about \textit{generic curves}, we will mean a curve from the set $\mathcal{G}$. Furthermore, generecity of $f$ implies the following geometric properties of $f$.

\begin{prop}\cite{DRZ1}
If $f\in C^{\infty}(S^1, \mathbb{R}^2)$ has only non-degenerate inflexion points and has no undulation points, then the number of inflexion points of $f$ and the rotation number of $f$ are finite.
\end{prop}
 
\begin{defn}
The \textit{tangent line of} $\Eq_{\frac{1}{2}}(\M)$ \textit{at a cusp point} $p$ is the limit of a sequence of $1$-dimensional vector spaces $T_{q_n}\M$ in $\mathbb{R}P^1$ for any sequence $q_n$ of regular points of $\Eq_{\frac{1}{2}}(\M)$ converging to $p$.
\end{defn}

This definition does not depend on the choice of a converging sequence of regular points. By Lemma \ref{LemParallelCurvature}(i) we can see that the tangent line to $\Eq_{\frac{1}{2}}(\M)$ at the cusp point $\frac{a+b}{2}$ is parallel to tangent lines to $\M$ at $a$ and $b$.

\begin{rem}
If $\M$ is an oval, then we have well defined the continuous normal vector field on the double covering $\M$ of $\Eq_{\frac{1}{2}}(\M)$, $\M\ni a\mapsto\frac{a+b}{2}\in\Eq_{\frac{1}{2}}(\M)$ by taking the normal vector to $\M$ compatible with the parameterization of $M$ at the point $a$, and defining this vector as a normal vector to the Wigner caustic at $\frac{a+b}{2}$.
\end{rem}

Let us notice that the continuous normal vector field to $\Eq_{\frac{1}{2}}(\M)$ at regular and cusp points is perpendicular to the tangent line to $\Eq_{\frac{1}{2}}(\M)$. Using this fact and the above definition we define the rotation number in the following way.

\begin{defn}\label{DefRotationNumber}
The \textit{rotation number} of the Wigner caustic of a generic curve $\M$ is the rotation number of the continuous normal vector field of the Wigner caustic.
\end{defn}

Moreover by Lemma \ref{LemParallelCurvature}(i) we can easily get the next two propositions.

\begin{rem}\label{RemInflPoint}
Let $p$ be a inflexion point of $\M$. Then $\Css(\M)$ is tangent to this inflexion point and has an endpoint there. The set $\Eq_{\lambda}(\M)$ for $\lambda\neq \frac{1}{2}$ has an inflexion point at $p$ (as the limit point) and is tangent to $\M$ at $p$. The Wigner caustic is tangent to $\M$ at $p$ too and it has an endpoint there. The Wigner caustic and the Centre Symmetry Set approach $p$ from opposite sides (\cite{B1, DMR1, GH1, GWZ1}). This branch of the Wigner caustic is studied in Section \ref{SectionOnShellGeometryOfConvex}.
\end{rem}

If $\M$ is a generic regular closed curve then $\Eq_{\frac{1}{2}}(\M)$ is a union of smooth parametrized curves. Each of these curves we will call a \textit{smooth branch} of the Wigner caustic of $\M$. In Fig. \ref{PictureAlgExam} we illustrate a non-convex curve $\M$, $\Eq_{\frac{1}{2}}(\M)$, and different smooth branches of $\Eq_{\frac{1}{2}}(\M)$.

\begin{figure}[h]
\centering
\includegraphics[scale=0.27]{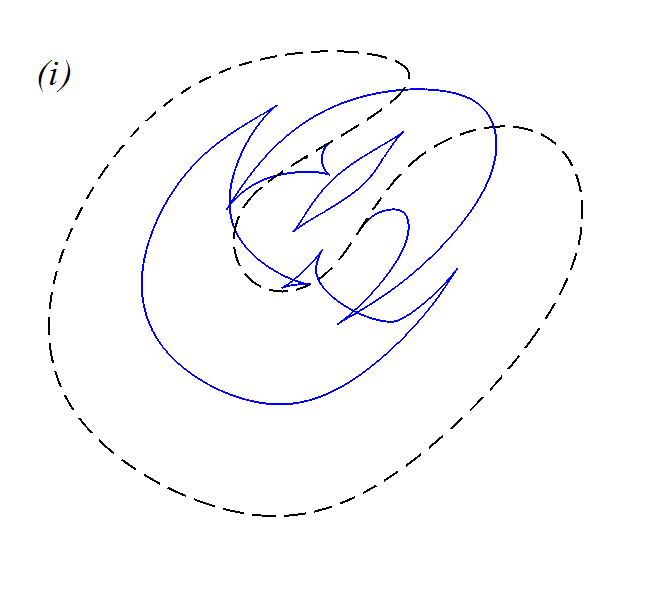}
\includegraphics[scale=0.27]{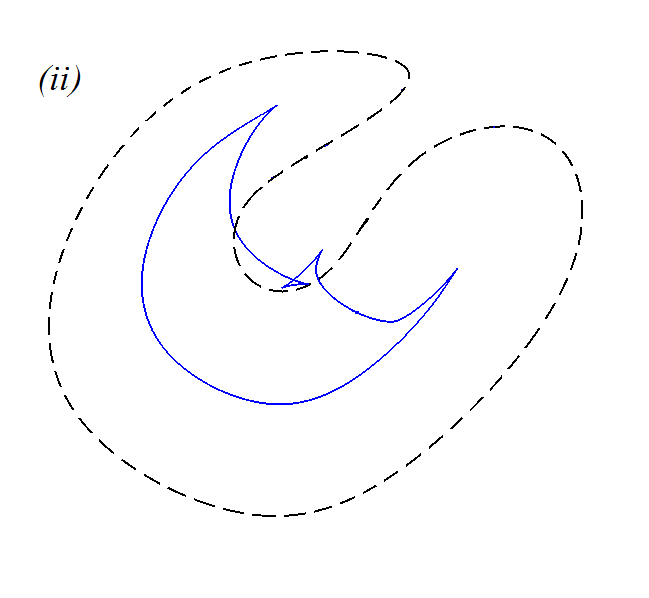}
\vspace{0.25cm}
\includegraphics[scale=0.27]{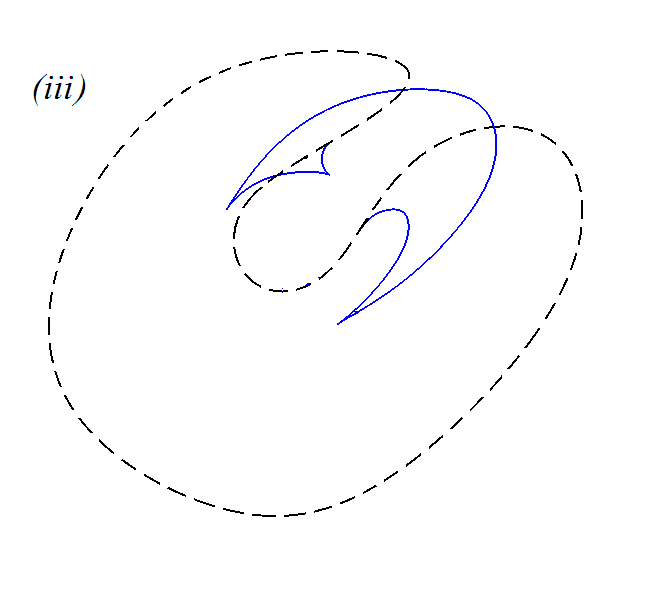}
\includegraphics[scale=0.27]{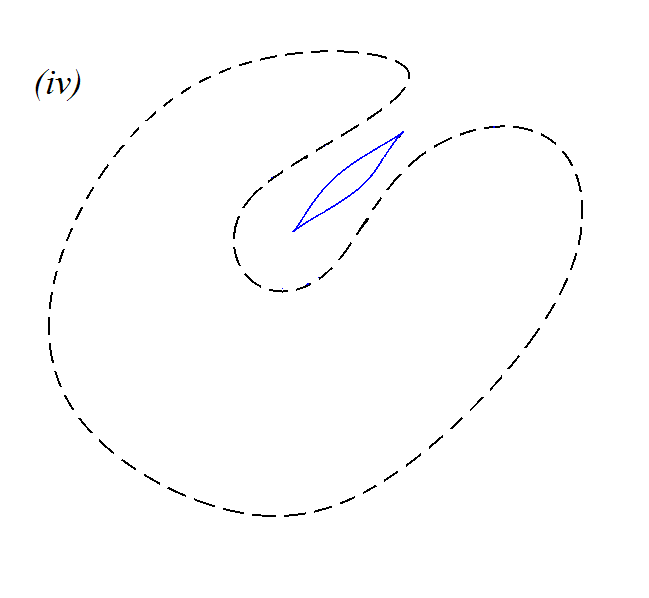}
\caption{(i) A non-convex curve $\M$ with two inflexion points (the dashed line) and $\Eq_{\frac{1}{2}}(\M)$,
 (ii-vi) $\M$ and different smooth branches of $\Eq_{\frac{1}{2}}(\M)$}
\label{PictureAlgExam}
\end{figure}

\section{A Decomposition of a Curve into Parallel Arcs}\label{SectionAlgorithm}

In this section we assume that $\M$ is a generic regular closed curve. We will present a decomposition of $M$ infto parallel arcs which will help us to study the geometry of the smooth branches of the Wigner caustic of $\M$.

\begin{defn}\label{DefAngleFunctin}
Let $S^1\ni s\mapsto f(s)\in\mathbb{R}^2$ be a parameterization of a smooth closed curve $\M$, such that $f(0)$ is not an inflexion point. A function $\varphi_{\M}:S^1\to [0,\pi]$ is called an \textit{angle function of} $\M$ if $\varphi_{\M}(s)$ is the oriented angle between $f'(s)$ and  $f'(0)$ modulo $\pi$. We identify the set $[0,\pi]$ with $S^1$ modulo $\pi$.
\end{defn}

\begin{defn}\label{DefLocalExtrema}
A point $\varphi$ in $S^1$ is a \textit{local extremum }of $\varphi_{\M}$ if there exists $s$ in $S^1$ such that $\varphi_{\M}(s)=\varphi$, $\varphi'_{\M}(s)=0$, $\varphi''_{\M}(s)\neq 0$. The local extremum $\varphi$ of $\varphi_{\M}$ is a \textit{local maximum (resp. minimum)} if $\varphi''_{\M}(s)<0$ (resp. $\varphi''_{\M}>0$). We denote by $\mathcal{M}(\varphi_{\M})$ the set of local extrema of $\varphi_{\M}$.  
\end{defn}

The angle function has the following properties.

\begin{prop}\label{PropAlgAngleFun}
Let $\M$ be a generic regular closed curve. Let $f$ be the arc length parameterization of $\M$ and let $\varphi_{\M}$ be the angle function of $\M$. Then
\begin{enumerate}[(i)]
\item $f(s_1), f(s_2)$ is a parallel pair of $\M$ if and only if $\varphi_{\M}(s_1)=\varphi_{\M}(s_2)$,
\item $\varphi_{\M}'(s)$ is equal to the signed curvature of $\M$ with respect to the parameterization of $\M$,
\item $\M$ has an inflexion point at $f(s_0)$ if and only if $\varphi_{\M}(s_0)$ is a local extremum.
\item if $\varphi_{\M}(s_1), \varphi_{\M}(s_2)$ are local extrema and there is no extremum on $\varphi_{\M}\big((s_1,s_2)\big)$, then one of extrema $\varphi_{\M}(s_1), \varphi_{\M}(s_2)$ is a local maximum and the other one is a local minimum.
\end{enumerate}
\end{prop}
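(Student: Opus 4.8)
The plan is to prove each of the four properties directly from Definition \ref{DefAngleFunctin}, exploiting that $\varphi_{\M}(s)$ is defined modulo $\pi$ as the oriented angle between $f'(s)$ and $f'(0)$, together with the genericity of $\M$. Recall that if $f$ is the arc length parameterization and $\theta(s)$ is a (continuous, $\mathbb R$-valued) lift of the turning angle of the unit tangent $f'(s)$, then $\varphi_{\M}(s) = \theta(s) - \theta(0) \bmod \pi$. The global structure of the argument is: (i) translates immediately into the statement that two tangent directions coincide in $\mathbb{R}P^1$ iff their turning angles differ by a multiple of $\pi$; (ii) is the standard fact $\theta'(s) = \kappa_{\M}(f(s))$ for arc length parameterization, and since $\varphi_{\M}$ and $\theta$ differ locally by a constant, $\varphi_{\M}'(s) = \kappa_{\M}(f(s))$; (iii) then follows since an inflexion point is exactly a zero of $\kappa_{\M}$, hence of $\varphi_{\M}'$, with the genericity assumption guaranteeing $\varphi_{\M}'' \neq 0$ there so that it is a genuine local extremum; (iv) is an elementary consequence of the intermediate value theorem applied to $\varphi_{\M}'$.

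In more detail, for (i) I would argue as follows: $f(s_1), f(s_2)$ is a parallel pair precisely when the tangent lines $T_{f(s_1)}\M$ and $T_{f(s_2)}\M$ are parallel, i.e. $f'(s_1)$ and $f'(s_2)$ span the same line in $\mathbb{R}P^1$, which holds iff the oriented angle between them is a multiple of $\pi$, i.e. $\theta(s_1) - \theta(s_2) \in \pi\mathbb{Z}$, which is exactly $\varphi_{\M}(s_1) = \varphi_{\M}(s_2)$ as elements of $[0,\pi]$ (with the endpoints $0$ and $\pi$ identified appropriately). One caveat to handle: the definition requires $a \neq b$, so strictly one should note that $\varphi_{\M}(s_1) = \varphi_{\M}(s_2)$ with $f(s_1) = f(s_2)$ (a self-intersection with parallel tangents) is excluded for generic $\M$, or simply observe this does not affect the stated equivalence for a parallel \emph{pair}. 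For (ii) I would fix $s_0$ and work on a neighborhood where $\varphi_{\M}$ has a continuous lift agreeing with $\theta - \theta(0)$; differentiating, $\varphi_{\M}'(s) = \theta'(s)$, and the classical formula for arc length parameterization gives $\theta'(s) = \kappa_{\M}(f(s))$, which is precisely the signed curvature with respect to the (arc length) parameterization.

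For (iii), by (ii) we have $\varphi_{\M}'(s_0) = \kappa_{\M}(f(s_0))$, so $f(s_0)$ is an inflexion point iff $\varphi_{\M}'(s_0) = 0$. For a generic curve the inflexion points are nondegenerate, i.e. $\kappa_{\M}'(s_0) \neq 0$, equivalently $\varphi_{\M}''(s_0) \neq 0$; thus $\varphi_{\M}'(s_0) = 0$ together with $\varphi_{\M}''(s_0) \neq 0$ is exactly the condition in Definition \ref{DefLocalExtrema} for $\varphi_{\M}(s_0)$ to be a local extremum. Conversely, a local extremum forces $\varphi_{\M}'(s_0) = 0$, hence $\kappa_{\M}(f(s_0)) = 0$. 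For (iv), suppose $\varphi_{\M}(s_1)$ and $\varphi_{\M}(s_2)$ are both local maxima (the case of two minima is symmetric); then $\varphi_{\M}'(s_1) = \varphi_{\M}'(s_2) = 0$ with $\varphi_{\M}''(s_1) < 0$ and $\varphi_{\M}''(s_2) < 0$, so $\varphi_{\M}'$ is negative just after $s_1$ and negative just before $s_2$, while being positive just before $s_1$ and positive just after $s_2$; in particular $\varphi_{\M}'$ changes sign on $(s_1,s_2)$ an even number of times counted properly, but each interior zero of $\varphi_{\M}'$ on $(s_1,s_2)$ is by genericity a nondegenerate one where $\varphi_{\M}'$ changes sign, hence corresponds to a local extremum of $\varphi_{\M}$, contradicting the hypothesis that there is no extremum in $\varphi_{\M}((s_1,s_2))$. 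I expect the main obstacle to be purely bookkeeping: being careful about the mod-$\pi$ ambiguity when lifting $\varphi_{\M}$ to an $\mathbb{R}$-valued function near a point, and pinning down exactly which genericity conditions (nondegenerate inflexions, no self-tangencies) are being invoked in parts (i), (iii), and (iv); the analytic content is entirely classical.
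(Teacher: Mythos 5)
The paper states this proposition without proof, treating all four items as elementary consequences of the definition of the angle function, and for (i)--(iii) your argument is exactly the intended one: parallelism of tangent lines is equality of directions in $\mathbb{R}P^1\cong\mathbb{R}/\pi\mathbb{Z}$; a local lift of $\varphi_{\M}$ differs from the turning angle $\theta$ by a constant, so $\varphi_{\M}'=\theta'=\kappa_{\M}$ for the arc length parameterization; and nondegeneracy of inflexions for generic $\M$ is what matches the requirement $\varphi_{\M}''\neq 0$ in Definition \ref{DefLocalExtrema}. Those parts are correct.

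Part (iv), however, contains a sign error that breaks the deduction as written. If $s_2$ is a local maximum, so $\varphi_{\M}'(s_2)=0$ and $\varphi_{\M}''(s_2)<0$, then $\varphi_{\M}'$ is \emph{positive} just before $s_2$ and negative just after, not the other way around. With the correct signs, $\varphi_{\M}'$ is negative just after $s_1$ and positive just before $s_2$, hence changes sign an \emph{odd} number of times on $(s_1,s_2)$ --- in particular at least once --- and each sign change is a nondegenerate zero of $\varphi_{\M}'$, i.e.\ an interior local extremum, which gives the desired contradiction. Your version concludes that the number of sign changes is even; but an even count is consistent with zero sign changes and hence with no interior extremum, so no contradiction follows from what you wrote. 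The fix is immediate; alternatively, argue directly: since $\varphi_{\M}'$ has no zeros on $(s_1,s_2)$ it has constant sign there, and that single sign forces $\varphi_{\M}''(s_1)$ and $\varphi_{\M}''(s_2)$ to have opposite signs (e.g.\ $\varphi_{\M}'>0$ on the interval gives $\varphi_{\M}''(s_1)>0$ and $\varphi_{\M}''(s_2)<0$), so one endpoint is a minimum and the other a maximum.
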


\begin{lem}\label{LemmaAlgEvenNumOfInfl}
Let $\varphi_{\M}$ be the angle function of a generic regular closed curve $\M$. Then the function $\varphi_{\M}$ has an even number of local extrema, i.e. $\M$ has an even number of inflexion points .
\end{lem}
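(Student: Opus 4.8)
The plan is to analyze the angle function $\varphi_{\M}: S^1 \to [0,\pi]$ as a continuous (indeed smooth, away from the endpoints $0$ and $\pi$ of the target interval) map on the circle, and count its local extrema by a parity argument. The key observation I would use is Proposition \ref{PropAlgAngleFun}(iv): between two consecutive local extrema of $\varphi_{\M}$, one is a local maximum and the other a local minimum. Hence, as one traverses $S^1$ once, the local maxima and local minima of $\varphi_{\M}$ alternate. Since $S^1$ is a closed loop, an alternating cyclic sequence of two symbols (max, min) must have even length: the number of maxima equals the number of minima, and the total number of extrema is twice the number of maxima, hence even.

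Concretely, first I would fix the arc length parameterization $f$ with $f(0)$ not an inflexion point, so that (by Proposition \ref{PropAlgAngleFun}(iii)) $\varphi_{\M}$ has no extremum at $s=0$; genericity guarantees finitely many inflexion points and hence finitely many extrema, all nondegenerate in the sense of Definition \ref{DefLocalExtrema}. List the extrema in cyclic order $s_1 < s_2 < \cdots < s_{2k}$ or $s_1 < \cdots < s_m$ around $S^1$. Applying Proposition \ref{PropAlgAngleFun}(iv) to each consecutive pair $(s_i, s_{i+1})$ (indices mod $m$, using the arc of $S^1$ between them containing no other extremum) shows the type alternates strictly along the cyclic sequence. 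An alternating $\mathbb{Z}/2$-valued cyclic word has even length, so $m$ is even. By Proposition \ref{PropAlgAngleFun}(iii), the inflexion points of $\M$ are exactly the preimages of the extrema, so $\M$ has an even number of inflexion points.

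The only subtle point — the part that needs a careful word rather than being routine — is the cyclic nature of the alternation: Proposition \ref{PropAlgAngleFun}(iv) is a statement about an arc $(s_1, s_2)$, so one must verify it also applies to the arc that "wraps around" through $s = 0$, i.e. the arc between the last extremum before $0$ and the first extremum after $0$. This is legitimate precisely because $f(0)$ was chosen to be a non-inflexion point, so that arc is an honest extremum-free arc of $S^1$ and the proposition applies verbatim; alternatively, one can simply reparameterize so that the "seam" falls in any chosen extremum-free arc. Once this is in place, the parity count closes the argument with no further computation.
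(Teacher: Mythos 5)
Your proof is correct and is essentially the paper's argument spelled out in full: the paper simply cites the fact that a generic smooth map from $S^1$ to $S^1$ has an even number of local extrema, and your alternation argument (maxima and minima alternate cyclically by Proposition \ref{PropAlgAngleFun}(iv), and an alternating cyclic word in two symbols has even length) is precisely the standard proof of that fact. Your care about the wrap-around arc through $s=0$ is a reasonable extra precaution, but nothing beyond the paper's one-line appeal is really at stake.
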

\begin{proof}
It is a consequence of the fact that the number of local extrema of a generic smooth function from $S^1$ to $S^1$ is even.

\end{proof}

Let $\varphi_{\M}$ be the angle function of $\M$. 

\begin{defn}\label{DefSeqOfLocExtr}
The \textit{sequence of local extrema} is the following sequence\linebreak $(\varphi_0, \varphi_1, \ldots, \varphi_{2n-1})$ where $\{\varphi_0, \varphi_1, \ldots, \varphi_{2n-1}\}=\mathcal{M}(\varphi_{\M})$ and the order is compatible with the orientation of $S^1=\varphi_{\M}(S^1)$.
\end{defn} 

\begin{defn}\label{DefSeqOfParallPts}
The \textit{sequence of division points} $\mathcal{S}_{\M}$ is the following sequence\linebreak $(s_0, s_1, \ldots, s_{k-1})$, where $\{s_0, s_1, \ldots, s_{k-1}\}=\varphi_{\M}^{-1}\left(\mathcal{M}(\varphi_{\M})\right)$ if $\mathcal{M}(\varphi_{\M})$ is not empty, otherwise $\{s_0, s_1, \ldots, s_{k-1}\}=\varphi_{\M}^{-1}\left(\varphi_{\M}(0)\right)$, and the order of $\mathcal{S}_{\M}$ is compatible with the orientation of $\M$.   
\end{defn}

In the sequence of division points there are points $s\in S^1$ such that $f(s), p$ is a parallel pair, where $p$ is an inflexion point of $\M$ (if $M$ has not inflexion point, then $p=f(0)$). In Fig. \ref{PictureAlgFindingPhiSets} we illustrate an example of a closed regular curve $\M$, the angle function $\varphi_{\M}$ and the sequence of division points. Let us notice that the images of points in the sequence of division points divide the curve $M$ in arcs. Some of these arcs (say $\mathcal{A}_1$ and $\mathcal{A}_2$) have the property that for any point $a_i\in\mathcal{A}_i$ there exists a point $a_j\in\mathcal{A}_j$ such that $a_i, a_j$ is a parallel pair for $i\neq j\in\{1,2\}$. Such arcs we will call \textit{parallel arcs}. The set of arcs splits into subsets such that any two arcs in the same subset are parallel (see Definition \ref{DefSetParallArcs}).

\begin{figure}[h]
\centering
\includegraphics[scale=0.33]{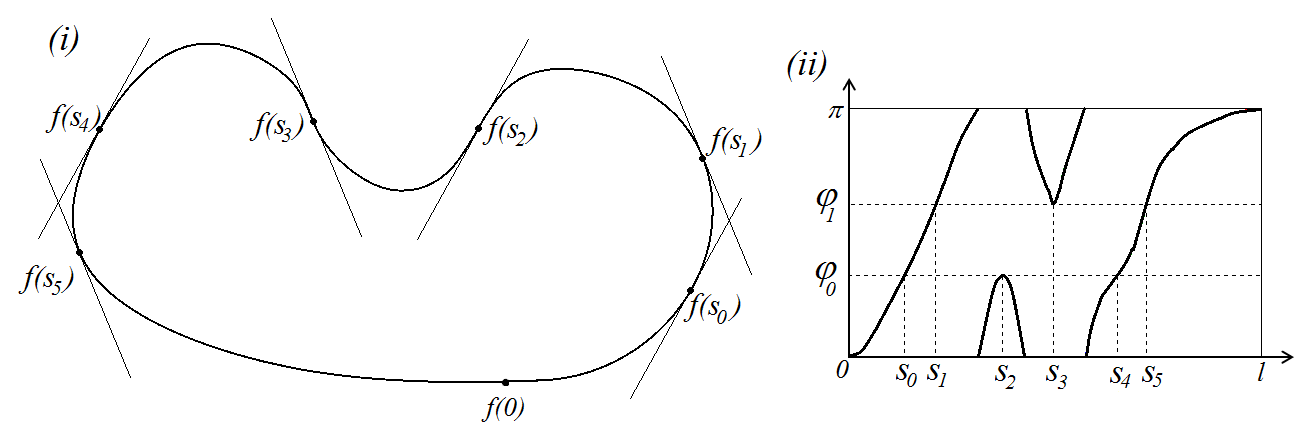}
\caption{(i) A closed regular curve $\M$ with points $f(s_i)$ and tangent lines to $\M$ at these points, (ii) a graph of the angle function $\varphi_{\M}$ with $\varphi_i$ and $s_i$ values}
\label{PictureAlgFindingPhiSets}
\end{figure}

\begin{prop}\label{PropParalllel-infexion}
If $\M$ is a generic regular closed curve and $a\in M$ is an inflexion point then 
the number of points $b\in M$, such that $b\ne a$ and $a$, $b$ is a parallel pair, is even.  
\end{prop}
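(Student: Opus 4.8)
The plan is to use the angle function $\varphi_{\M}$ and Proposition \ref{PropAlgAngleFun}. By part (i) of that proposition, a point $b = f(s)$ forms a parallel pair with $a = f(s_0)$ precisely when $\varphi_{\M}(s) = \varphi_{\M}(s_0)$ and $s \ne s_0$; so the number in question is the number of points in the fiber $\varphi_{\M}^{-1}(\varphi_{\M}(s_0))$ other than $s_0$ itself. By part (iii), $a$ being an inflexion point means $\varphi_{\M}(s_0)$ is a local extremum of $\varphi_{\M}$, say (without loss of generality) a local maximum, with value $c := \varphi_{\M}(s_0) \in [0,\pi]$. The task is to show $\#\varphi_{\M}^{-1}(c)$ is odd.

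First I would set up genericity: for a generic curve the value $c$ is attained only at finitely many points, at each of which $\varphi_{\M}'$ either is nonzero (a transverse crossing of the level) or vanishes with $\varphi_{\M}'' \ne 0$ (a local extremum attaining the value $c$); moreover no two extrema share the same value, so $s_0$ is the unique point of $\varphi_{\M}^{-1}(c)$ at which $\varphi_{\M}'=0$. The key step is a parity/degree count: think of $\varphi_{\M}$ as a smooth map $S^1 \to S^1 = \Real/\pi\Real$ (using that tangent directions are taken mod $\pi$). A regular value of this map has an even number of preimages when the map is considered as an unoriented covering-type count, but more precisely one counts preimages with the sign of $\varphi_{\M}'$ and the signed count equals the degree; here the relevant fact is simpler. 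Pick a value $c' $ slightly below $c$ (on the side into which the graph of $\varphi_{\M}$ descends from the maximum at $s_0$); since $c'$ is a regular value, $\varphi_{\M}^{-1}(c')$ consists of an even number of points, because as one traverses $S^1$ the function $\varphi_{\M}$ returns to its starting value, so it must cross any regular level an even number of times (each ascending crossing paired with a descending one). Near $s_0$, the level $c'$ is crossed twice (once on each side of the maximum), whereas the level $c$ is attained once at $s_0$; at every other point of $\varphi_{\M}^{-1}(c)$ the crossing is transverse and persists as a single nearby crossing of level $c'$. Hence $\#\varphi_{\M}^{-1}(c') = \#\varphi_{\M}^{-1}(c) - 1 + 1 + 1 = \#\varphi_{\M}^{-1}(c) + 1$ — more carefully, the two crossings of $c'$ near $s_0$ replace the single point $s_0$, contributing a net change of $+1$, while all other points contribute nothing — so $\#\varphi_{\M}^{-1}(c)$ is odd, and the number of $b \ne a$ forming a parallel pair with $a$ is $\#\varphi_{\M}^{-1}(c) - 1$, which is even.

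The main obstacle is making the local bookkeeping near $s_0$ and near the other preimages of $c$ rigorous and genericity-proof: one must ensure that the chosen side for $c'$ is the side on which the maximum at $s_0$ actually produces two nearby level crossings (it does, by $\varphi_{\M}'' \ne 0$), that no other extremum of $\varphi_{\M}$ has value exactly $c$ (generic), and that for $c'$ sufficiently close to $c$ no new preimages appear away from $\varphi_{\M}^{-1}(c)$ (compactness of $S^1$ and continuity). Once these points are pinned down, the evenness of $\#\varphi_{\M}^{-1}(c')$ — equivalently the statement that a continuous function $S^1 \to \Real$ hits a regular value an even number of times — closes the argument. One could alternatively phrase the whole thing via Lemma \ref{LemmaAlgEvenNumOfInfl}-style reasoning (an arc of $S^1$ between consecutive points of $\varphi_{\M}^{-1}(c)$ on which $\varphi_{\M} < c$ or $\varphi_{\M} > c$ alternate, except that the extremum at $s_0$ is a "degenerate" crossing), but the regular-value count is the cleanest route.
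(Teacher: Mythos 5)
Your overall strategy --- reduce to showing that the fiber $\varphi_{\M}^{-1}(c)$ over the extremal value $c=\varphi_{\M}(s_0)$ has odd cardinality, by perturbing to a nearby regular value $c'$ and tracking how the preimage count changes at the extremum --- is sound, and it is in substance the same parity count the paper performs; the paper does the bookkeeping by cutting the graph of $\varphi_{\M}$ into paths running between the levels $0$ and $\pi$ and counting intersections path by path, which is clumsier than your regular-value perturbation. Your local analysis (two nearby crossings of $c'$ replacing the single extremal point $s_0$, one nearby crossing persisting at each transverse preimage, no other extremum on the level $c$ by genericity) is correct.

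The gap is in the one step you label ``the relevant fact'': the justification that $\#\varphi_{\M}^{-1}(c')$ is even. The argument ``as one traverses $S^1$ the function returns to its starting value, so each ascending crossing pairs with a descending one'' is valid only for real-valued functions, equivalently for circle-valued maps of degree $0$. But $\varphi_{\M}$ is a map $S^1\to\mathbb{R}/\pi\mathbb{Z}$, and for a circle-valued map the numbers of ascending and descending crossings of a regular level differ by the degree, so the unsigned preimage count is congruent mod $2$ to the degree rather than automatically even: the identity map of $S^1$ returns to its starting value yet meets every regular level exactly once. What makes the claim true here is that the degree of $\varphi_{\M}$ equals twice the rotation number of the closed curve $\M$ and is therefore an even integer; this is precisely the point at which the paper invokes ``the rotation number of $\M$ is an integer, so the number of paths from $0$ to $\pi$ or from $\pi$ to $0$ is even.'' You actually set up the degree framework and then discarded it in favour of the invalid pairing argument; reinstating it (degree of $\varphi_{\M}$ equals $2\cdot(\text{rotation number})$, hence even, hence every regular value has an even number of preimages) closes the gap, after which the rest of your proof goes through. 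A final small point: to pass from parameter values in the fiber to points $b\in\M$ you should record, as the paper does at the outset, that generically no relevant preimage is a self-intersection point of $\M$ with mutually tangent branches, so the fiber count equals the point count.
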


\begin{proof}
There are no inflexion points $b\in M$ such that $a$, $b$ is a parallel pair and the point $a$ is not a self-intersection point of $M$, since the curve $M$ is generic. The inflexion points of $M$ correspond to local extrema of the angle function  $\varphi_{\M}$.
We divide the graph of the angle function $\varphi_{\M}$ into continuous paths of the form 
\begin{align*}
\big\{(t,\varphi_{M}(t))\ |\ t\in[t_1,t_2], \varphi_{M}(t_1),\varphi_{M}(t_2)\in\{0,\pi\}, \forall t\in(t_1,t_2)\ \varphi_{\M}(t)\notin \{0,\pi\}\big\}.
\end{align*}

Let $\alpha$ belong to $(0,\pi)$. First we assume that $\alpha$ is not equal to a local extremum of a path $\mathcal{P}$.  Then a line $\varphi=\alpha$  intersects the path $\mathcal{P}$ an even number of times if $\mathcal{P}$ is a path from $0$ to $0$ or from $\pi$ to $\pi$, since both the beginning and the end  of  $\mathcal P$ are on the same side of the line (see Fig. \ref{FigSmEven}(i)). This line intersects $\mathcal P$  an odd number of times if $\mathcal{P}$ is a path from $0$ to $\pi$ or from $\pi$ to $0$, since the beginning and the end of $\mathcal P$ are on different sides of the line (see Fig. \ref{FigSmEven}(ii)).

\begin{figure}[h]
\centering
\includegraphics[scale=0.3]{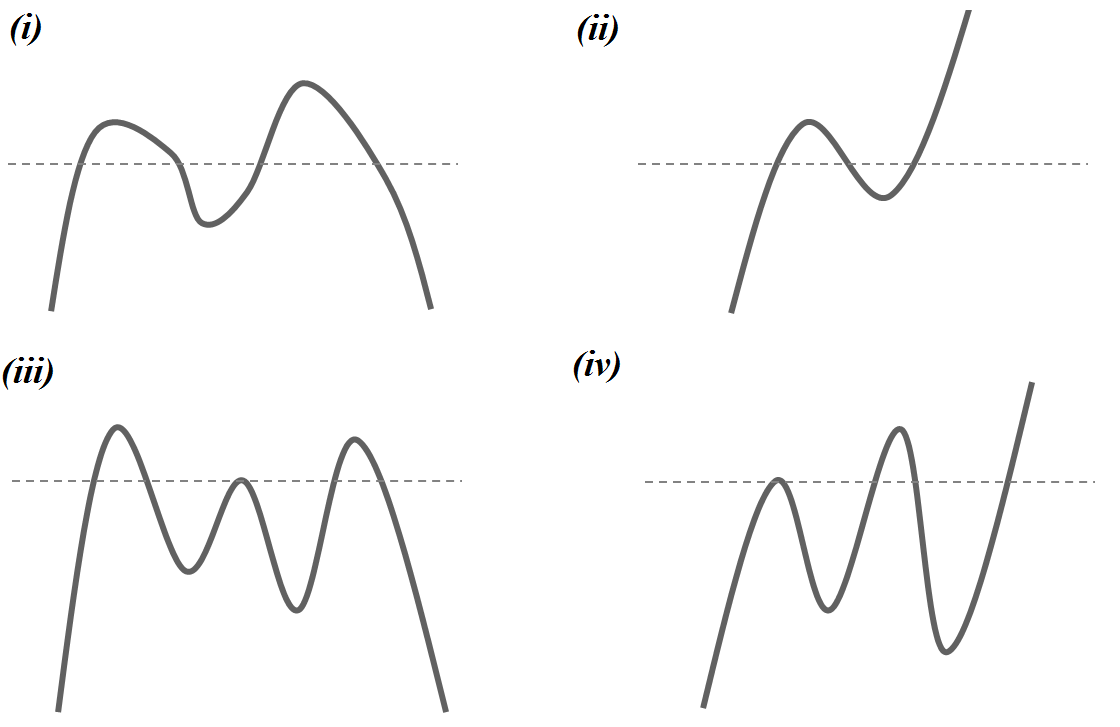}
\caption{Continuous paths}
\label{FigSmEven}
\end{figure}

Now we assume that  $\alpha$ is equal to a local extremum of $\mathcal{P}$. In this case the line $\varphi=\alpha$ intersects a path $\mathcal{P}$ an odd number of times if $\mathcal{P}$ is a path from $0$ to $0$ or from $\pi$ to $\pi$ (see Fig. \ref{FigSmEven}(iii)) and this line intersects $\mathcal P$  an even number of times if $\mathcal{P}$ is a path from $0$ to $\pi$ or from $\pi$ to $0$ (see Fig. \ref{FigSmEven}(iv)), since by a small local vertical perturbation around the extremum point we obtain the previous cases and the numbers of intersection points have a difference $\pm 1$ (see Fig. \ref{FigVertPert}).

\begin{figure}[h]
\centering
\includegraphics[scale=0.3]{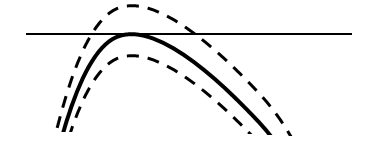}
\caption{Vertical perturbation nearby a local maximum}
\label{FigVertPert}
\end{figure}

Let us notice that a path from $0$ to $0$ or from $\pi$ to $\pi$ corresponds to an arc of a curve with the rotation number equals $0$ and a path from $0$ to $\pi$ or from $\pi$ to $0$ corresponds to an arc of a curve with t he rotation number equals $\pm\frac{1}{2}$. Since the rotation number of $\M$ is an integer, the number of paths from $0$ to $\pi$ or from $\pi$ to $0$ in the graph of $\varphi_{\M}$ is even. Each path of this type intersects every horizontal line $\varphi =\alpha$ at least once. Thus the number of intersections of $\varphi_{\M}$ and the line $\varphi=\varphi_{\M}(f^{-1}(a))$ is odd. But the number of points $b\ne a$ such that $a, b$ is a parallel pair is one less than the number of intersection points of the graph of $\varphi_{\M}$ and the line $\varphi=\varphi_{\M}(f^{-1}(a))$.
\end{proof}

The number of inflexion points of a generic regular closed curve is even.  Thus by Proposition \ref{PropParalllel-infexion} we have the following corollary.

\begin{cor}\label{PropMisEven}
If $\M$ is a generic regular closed curve then $\#\mathcal{S}_M$ is even.
\end{cor}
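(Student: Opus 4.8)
The plan is to count the elements of $\mathcal{S}_{\M} = \varphi_{\M}^{-1}(\mathcal{M}(\varphi_{\M}))$ (or $\varphi_{\M}^{-1}(\varphi_{\M}(0))$ in the inflexion-free case) by partitioning them according to which level set of $\varphi_{\M}$ they lie on, and to show each block has even size so the total is even. First I would dispose of the easy case: if $\M$ has no inflexion points, then $\varphi_{\M}$ is a submersion and $\mathcal{S}_{\M} = \varphi_{\M}^{-1}(\varphi_{\M}(0))$; since $f(0)$ is not an inflexion point and $\M$ is convex-arc-wise generic, $\varphi_{\M}(0)$ is a regular value, so $\varphi_{\M}^{-1}(\varphi_{\M}(0))$ consists of $f(0)$ together with all points forming a parallel pair with $f(0)$, and by Proposition \ref{PropParalllel-infexion} the number of the latter is even, hence $\#\mathcal{S}_{\M}$ is odd plus... — wait, that gives odd. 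So the inflexion-free case needs a different argument: in fact if $\M$ has no inflexion points it is convex (its curvature never vanishes and the rotation number is an integer), and then for a generic convex curve every value in $(0,\pi)$ is attained exactly twice, so $\#\varphi_{\M}^{-1}(\varphi_{\M}(0)) = 2$, which is even. I would state this as the base case.

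For the main case where $\mathcal{M}(\varphi_{\M})$ is nonempty, let $\varphi_0,\dots,\varphi_{2n-1}$ be the sequence of local extrema (there are evenly many by Lemma \ref{LemmaAlgEvenNumOfInfl}). Then $\mathcal{S}_{\M} = \bigsqcup_{j=0}^{2n-1} \varphi_{\M}^{-1}(\varphi_j)$, and it suffices to show $\#\varphi_{\M}^{-1}(\varphi_j)$ is even for each $j$. Fix such a $\varphi_j$ and let $s^\star$ be the parameter with $\varphi_{\M}(s^\star)=\varphi_j$, $\varphi'_{\M}(s^\star)=0$; by genericity $s^\star$ is the unique critical preimage of $\varphi_j$, so $f(s^\star)$ is the unique inflexion point on that level set. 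Every other point of $\varphi_{\M}^{-1}(\varphi_j)$ is, by Proposition \ref{PropAlgAngleFun}(i), a point forming a parallel pair with $f(s^\star)$. Now apply Proposition \ref{PropParalllel-infexion} with $a = f(s^\star)$: the number of $b \neq a$ with $a,b$ a parallel pair is even. Hence $\#\varphi_{\M}^{-1}(\varphi_j) = 1 + (\text{even}) $ — which is odd, not even.

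So a cruder bound is forced: instead of splitting level by level, I would split $\mathcal{S}_{\M}$ into the set $I$ of inflexion points (even cardinality, by the hypothesis recalled just before the corollary) and the complement $R = \mathcal{S}_{\M}\setminus I$, and show $\#R$ is even. The cleanest route: pair each point $s\in R$ with the inflexion point $f(s^\star_{s})$ on whose level set it sits — but this is many-to-one. Better: observe that $\mathcal{S}_{\M}$ is exactly the preimage of a finite subset of $S^1$ under the map $\varphi_{\M}\colon S^1\to S^1$; $\mathcal{M}(\varphi_{\M})$ together with the critical values partition $S^1$ into finitely many arcs, and $\varphi_{\M}$ restricted to the preimage of the closure of each such arc is a covering-with-folds whose boundary behaviour I would track exactly as in the proof of Proposition \ref{PropParalllel-infexion} (counting intersections of horizontal lines with the paths of the graph, using that the number of $0\!-\!\pi$ paths is even because the total rotation number is an integer). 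The upshot of that bookkeeping is that the total number of preimages of the full set $\mathcal{M}(\varphi_{\M})$ — which is $\#\mathcal{S}_{\M}$ — picks up one contribution per path-endpoint (an even count, two per path) plus, for each internal extremum value lying in a path's interior, a parity shift that globally cancels since the extrema come in maximum/minimum pairs along each path by Proposition \ref{PropAlgAngleFun}(iv).

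The main obstacle is precisely the sign/parity bookkeeping in the last step: one must make sure that when a horizontal line $\varphi=\varphi_j$ passes through the value of a local extremum, the off-by-one correction relative to the generic count is applied consistently across all paths and that these corrections sum to an even number. I expect this to follow by noting that summing the intersection count of $\varphi=\varphi_j$ with the graph over all $j$ equals $\#\mathcal{S}_{\M}$ up to the known even contributions, and invoking the even/odd alternation of extrema (Proposition \ref{PropAlgAngleFun}(iv)) together with the evenness of the number of $0\!-\!\pi$ paths. Everything else is a direct transcription of the argument already given for Proposition \ref{PropParalllel-infexion}.
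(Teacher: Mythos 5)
There is a genuine gap, but ironically it is one you created yourself: your first attempt in the main case is essentially the paper's proof, and you abandoned it one observation short of the finish. You correctly decompose $\mathcal{S}_{\M}=\bigsqcup_{j=0}^{2n-1}\varphi_{\M}^{-1}(\varphi_j)$ over the local extrema, and you correctly compute via Proposition \ref{PropParalllel-infexion} that each block has cardinality $1+(\text{even})$, i.e.\ is odd. At that point you declare the approach dead because the blocks are ``odd, not even.'' But you never needed each block to be even --- you needed the \emph{total} to be even, and a sum of $2n$ odd numbers is even. The evenness of the number of blocks is exactly Lemma \ref{LemmaAlgEvenNumOfInfl} (equivalently, the sentence the paper places immediately before the corollary), which you yourself quote. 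That is the whole proof, and it is what the paper intends when it says the corollary follows from Proposition \ref{PropParalllel-infexion} together with the evenness of the number of inflexion points. The replacement argument you then sketch (re-running the path/horizontal-line bookkeeping of Proposition \ref{PropParalllel-infexion} globally) is not carried out: you explicitly defer the ``sign/parity bookkeeping'' and say you ``expect'' it to work, so as written it is a gap, and an unnecessary one.

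Two smaller points. First, in the inflexion-free case your justification is off: a closed regular curve with non-vanishing curvature need not be simple, so it need not be convex in the paper's sense; if its rotation number is $n$ with $|n|>1$, each value of $\varphi_{\M}$ is attained $2|n|$ times, not twice (compare the curves $C_n$ in Section \ref{SectionRosettes}). The count $2|n|$ is still even, so the conclusion survives, but the claim ``attained exactly twice'' should be replaced by ``attained $2|n|$ times.'' Second, for the main case you should note (as genericity guarantees) that the $2n$ extremum values $\varphi_j$ are pairwise distinct --- no two inflexion points form a parallel pair --- so that the level sets really are disjoint and each contains exactly one inflexion point; this is implicit in the proof of Proposition \ref{PropParalllel-infexion} and is needed for the block count $1+(\text{even})$.
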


We recall that in this section we assume that $\M$ is a generic regular closed curve and let $\#\mathcal{S}_{\M}=2m$.

The functions $\minm_{2m}, \maxm_{2m}:\{0,1,\ldots,2m-1\}^2\to\{0,1,\ldots,2m-1\}$ are analogs of the minimum and the maximum functions modulo $2m$, respectively. Namely,
\begin{align*}
\minm_{2m}(k,l):&=\left\{\begin{array}{ll}2m-1, &\text{ if }\{k,l\}=\{0, 2m-1\},\\ \min(k,l), &\text{ otherwise},\end{array}\right.\\
\maxm_{2m}(k,l):&=\left\{\begin{array}{ll}0, &\text{ if }\{k,l\}=\{0, 2m-1\},\\ \max(k,l), &\text{ otherwise}.\end{array}\right.
\end{align*}

We denote by  $(s_{2m-1},s_0)$ an interval $(s_{2m-1}, L_{\M}+s_0)$, where $L_{\M}$ is the length of $M$.

In the following definition indexes $i$ in $\varphi_i$ are computed modulo $2n$, indexes $j, j+1$ in $\overarc{\p_{j}}{\p_{j+1}}$ and $\overarc{\p_{j+1}}{\p_{j}}$ are computed modulo $2m$.

\begin{defn}\label{DefSetParallArcs}
If $\mathcal{M}(\varphi_{\M})=\{\varphi_0, \varphi_1, \ldots, \varphi_{2n-1}\}$, then for every $i\in\{0, 1, \ldots, 2n-1\}$, a \textit{set of parallel arcs} $\Phi_i$ is the following set
\begin{align*}
\Phi_i=\Big\{\overarc{\p_k}{\p_l}\ \big|\ &k-l=\pm 1\ \mbox{mod} (2m),\ \varphi_{\M}(s_k)=\varphi_i,\ \varphi_{\M}(s_l)=\varphi_{i+1}, \\ &\varphi_{\M}\big((s_{\minm_{2m}(k,l)}, s_{\maxm_{2m}(k,l)})\big)=(\varphi_i, \varphi_{i+1})\Big\},
\end{align*}
where $\p_k=f(s_k)$ and $\overarc{\p_k}{\p_l}=f\left(\big[s_{\minm_{2m}(k, l)}, s_{\maxm_{2m}(k, l)}\big]\right)$.

If $\mathcal{M}(\varphi_{\M})$ is empty then we define only one \textit{set of parallel arcs} as follows:
\begin{align*}
\Phi_0=\big\{\overarc{\p_0}{\p_1}, \overarc{\p_1}{\p_2}, \ldots, \overarc{\p_{2m-2}}{\p_{2m-1}}, \overarc{\p_{2m-1}}{\p_0}\big\}.
\end{align*}
\end{defn}

The set of parallel arcs has the following property.

\begin{prop}\label{PropDiffeoBetweenParallelArcs}
Let $f:S^1\to\mathbb{R}^2$ be the arc length parameterization of $\M$.
For every two arcs $\overarc{\p_{k}}{\p_{l}}$, $\overarc{\p_{k'}}{\p_{l'}}$ in $\Phi_i$ the well defined map
\begin{align*}
\overarc{\p_{k}}{\p_{l}}\ni p\mapsto P(p)\in\overarc{\p_{k'}}{\p_{l'}},
\end{align*}
where the pair $p, P(p)$ is a parallel pair of $\M$, is a diffeomorphism.
\end{prop}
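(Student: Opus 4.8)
The plan is to exploit the defining property of the sets of parallel arcs $\Phi_i$: all arcs in $\Phi_i$ are indexed by pairs of consecutive parallel points whose angle-function images traverse the same interval $(\varphi_i,\varphi_{i+1})$ monotonically and without critical points. The key observation is that by Proposition~\ref{PropAlgAngleFun}(i), a parallel pair of $\M$ is precisely a pair of parameter values with equal angle-function value, and by Proposition~\ref{PropAlgAngleFun}(ii)--(iii) the restriction of $\varphi_{\M}$ to any arc $\overarc{\p_k}{\p_l}\in\Phi_i$ is strictly monotone (its derivative is the curvature, which does not change sign on such an arc precisely because there is no local extremum, i.e. no inflexion point, in the open interval). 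Hence $\varphi_{\M}$ restricted to each of $\overarc{\p_k}{\p_l}$ and $\overarc{\p_{k'}}{\p_{l'}}$ is a $C^\infty$ diffeomorphism onto the closed interval $[\varphi_i,\varphi_{i+1}]$ (or $[\varphi_{i+1},\varphi_i]$, depending on orientation of the arc).

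First I would fix arc length parameterizations and write the two arcs as $f|_{[\alpha,\beta]}$ and $f|_{[\alpha',\beta']}$, and let $\psi=\varphi_{\M}|_{[\alpha,\beta]}$, $\psi'=\varphi_{\M}|_{[\alpha',\beta']}$. Both are strictly monotone smooth bijections onto the same interval $I=[\min(\varphi_i,\varphi_{i+1}),\max(\varphi_i,\varphi_{i+1})]$; their derivatives (the signed curvatures) are nowhere zero on the open arcs. Then the map $P$ sending $p=f(s)$ to the unique point $P(p)=f(\sigma)$ with $p,P(p)$ a parallel pair is exactly $f\circ(\psi')^{-1}\circ\psi\circ f^{-1}$ on the relevant arc. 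Well-definedness and the fact that $P$ lands in $\overarc{\p_{k'}}{\p_{l'}}$ follow because $\psi$ and $\psi'$ have the same image $I$; matching endpoints to endpoints handles the closed-arc boundary behavior (the condition $k-l=\pm1 \bmod m$ together with the definition of $\Phi_i$ guarantees the angle values at the endpoints agree appropriately, $\varphi_{\M}(s_k)=\varphi_i=\varphi_{\M}(s_{k'})$ and $\varphi_{\M}(s_l)=\varphi_{i+1}=\varphi_{\M}(s_{l'})$, possibly after swapping which endpoint is which).

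The smoothness of $P$ and of $P^{-1}$ is then immediate from the chain rule and the inverse function theorem, since $(\psi')^{-1}$ is smooth on the interior (because $\psi'$ has nonvanishing derivative there) and, at the endpoints, one invokes that $\M$ is generic: at an inflexion point the curvature has a simple zero, so $\psi$ and $\psi'$ are, in suitable local coordinates, of the form $t\mapsto \varphi_i + c\,(t-\alpha)^2+\cdots$ with $c\neq0$, and the composition $(\psi')^{-1}\circ\psi$ remains a smooth diffeomorphism across the endpoint because the two square-root branches match (this is the same mechanism by which $\Eq_\lambda$ acquires cusps rather than worse singularities). I expect this endpoint analysis to be the only delicate point: at an interior point of the arcs everything is a routine application of the inverse function theorem, but at an endpoint that is an inflexion point one must check that composing the two "half-parabola" germs yields a genuine $C^\infty$ diffeomorphism and not merely a homeomorphism — this uses genericity (nondegeneracy of the inflexion, so the quadratic term is nonzero) in an essential way. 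Once that is in hand, $P$ is a bijective smooth map with smooth inverse, i.e. a diffeomorphism, completing the proof.
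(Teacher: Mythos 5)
The paper states this proposition without proof, so there is nothing to compare against directly; your overall strategy --- identify parallel pairs with level sets of the angle function, note that $\varphi_{\M}$ is strictly monotone on each arc of $\Phi_i$ because consecutive local extrema bound an interval containing no critical value, and write $P=f\circ(\psi')^{-1}\circ\psi\circ f^{-1}$ --- is surely the intended argument, and it is correct on the interiors of the arcs.

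The gap is in your endpoint analysis, and it is the very point you flag as delicate. You treat only the case in which \emph{both} matched endpoints are nondegenerate inflexion points, where the two quadratic germs $\varphi_i+c(t-\alpha)^2+\cdots$ and $\varphi_i+c'(t-\alpha')^2+\cdots$ compose to a genuine diffeomorphism with derivative $\sqrt{c/c'}$ at the endpoint. But for a generic curve an inflexion point is never parallel to a \emph{different} inflexion point (the paper uses exactly this in the proof of Proposition \ref{PropParalllel-infexion}), so the quadratic--quadratic configuration occurs only when the two arcs share that inflexion point as a common endpoint (as $\overarc{\p_3}{\p_2}$ and $\overarc{\p_1}{\p_2}$ do in the example of Section \ref{SectionExample}). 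The generic configuration for two arcs that do not share the endpoint is mixed: one endpoint is an inflexion point ($\psi$ quadratic, $\psi'(s_k)=0$) while the matched endpoint of the other arc is not ($\psi'$ has nonvanishing derivative) --- compare $\overarc{\p_1}{\p_2}$ with $\overarc{\p_5}{\p_0}$ in $\Phi_1$ of that example. There one gets $\sigma-s_{k'}=(s-s_k)^2\,u(s)/w(\sigma)$ with $u(s_k),w(s_{k'})\neq 0$, so $P$ is smooth but $P'(s_k)=0$ and $P^{-1}$ is only H\"older at that endpoint; the map is a smooth homeomorphism of the closed arcs, not a diffeomorphism there. So either the statement must be read as a diffeomorphism of the arcs away from inflexion-point endpoints (which is all that is needed for its use in Theorem \ref{ThmGlueSchemeIsBranch}), or your proof needs the full three-case analysis at the endpoints: neither matched endpoint an inflexion point (routine inverse function theorem), a shared inflexion endpoint (your square-root matching), and the mixed case (where the diffeomorphism property genuinely degenerates). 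As written, your argument asserts a matching of square-root branches in a situation that genericity rules out, and silently skips the case that actually occurs.
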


\begin{defn}\label{DefGlueingScheme}
Let $\overarc{\p_{k_1}}{\p_{k_2}}$, $\overarc{\p_{k_2}}{\p_{l_2}}$ belong to the same set of parallel arcs, then $\begin{array}{ccc}
\p_{k_1} &\frown&\p_{k_2} \\ \hline
\p_{l_1} &\frown&\p_{l_2} \\ \hline \end{array}$ denotes the following set (the \textit{arc}) 
\begin{align*}
\mbox{cl}\Big\{(a,b)\in\M\times\M\ \Big|\ &a\in\overarc{\p_{k_1}}{\p_{k_2}}, b\in\overarc{\p_{l_1}}{\p_{l_2}},\ a,b\text{ is a parallel pair of }\M\Big\}.
\end{align*}

In addition $\begin{array}{ccccc}
\p_{k_1} &\frown &\ldots &\frown & \p_{k_n}\\ \hline
\p_{l_1} &\frown &\ldots &\frown& \p_{l_n}\\ \hline \end{array}$ denotes $\displaystyle\bigcup_{i=1}^{n-1}\begin{array}{ccc} 
\p_{k_i} &\frown &\p_{k_{i+1}} \\ \hline
\p_{l_i} &\frown &\p_{l_{i+1}} \\ \hline \end{array}$. We will call this set a \textit{glueing scheme}.
\end{defn}

\begin{rem}\label{RemInflexionInScheme}
If $\overarc{\p_k}{\p_l}$ belongs to a set of parallel arcs, then  there are neither inflexion points nor points with parallel tangent lines to tangent lines at inflexion points of $\M$ in $\overarc{\p_k}{\p_l}\setminus\{\p_k, \p_l\}$.
\end{rem}

\begin{defn}
The $\frac{1}{2}$-\textit{point map} (\cite{DR1}) is the map
\begin{align*}
\pi_{\frac{1}{2}}:\M\times\M\to\mathbb{R}^2, (a,b)\mapsto\dfrac{a+b}{2}.
\end{align*}
\end{defn}

Let $\mathcal{A}_1=\overarc{\p_{k_1}}{\p_{k_2}}$ and $\mathcal{A}_2=\overarc{\p_{l_1}}{\p_{l_2}}$ be two arcs of $\M$ which belong to the same set of parallel arcs. It is easy to see that $\Eq_{\frac{1}{2}}\Big(\mathcal{A}_1\cup\mathcal{A}_2\Big)$ consists of one arc $\begin{array}{ccc}
\p_{k_1} &\frown&\p_{k_2} \\ \hline
\p_{l_1} &\frown&\p_{l_2} \\ \hline \end{array}$ under $\pi_{\frac{1}{2}}$ (see Fig. \ref{FigEqFromParallelArcs}). From this observation we get the following proposition.

\begin{figure}[h]
\centering
\includegraphics[scale=0.5]{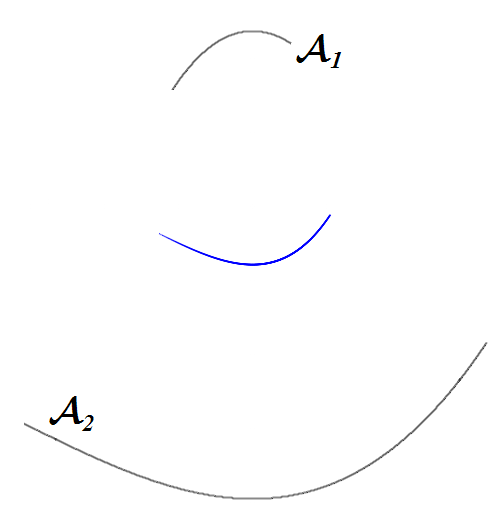}
\caption{Two arcs $\mathcal{A}_1$ and $\mathcal{A}_2$ of $\M$ belonging to the same set of parallel arcs and $\Eq_{\frac{1}{2}}(\mathcal{A}_1\cup\mathcal{A}_2)$}
\label{FigEqFromParallelArcs}
\end{figure}

\begin{prop}\label{PropNumDiffArcs}
The Wigner caustic $\Eq_{\frac{1}{2}}(\M)$ is the image of the union of \linebreak $\displaystyle \sum_{i}{\#\Phi_i\choose 2}$ different arcs under the $\frac{1}{2}$-point map $\pi_{\frac{1}{2}}$.
\end{prop}

\begin{prop}\label{PropAlgAlwaysGoFurhter}
Let $\M$ be a generic regular closed curve which is not convex. If a glueing scheme is of the form $\begin{array}{ccc} 
\p_{k_1} &\frown &\p_{k_2} \\ \hline
\p_{l_1} &\frown &\p_{l_2} \\ \hline \end{array}$, then this scheme can be prolonged in a unique way to $\begin{array}{ccccc} 
\p_{k_1} &\frown &\p_{k_2} &\frown &\p_{k_3}\\ \hline
\p_{l_1} &\frown &\p_{l_2} &\frown &\p_{l_3}\\ \hline \end{array}$ such that $(k_1,l_1)\ne (k_3,l_3)$.
\end{prop}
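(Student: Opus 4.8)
The plan is to analyze what happens at the "free" end of the glueing scheme — say the endpoint $\overarc{\p_{k_2}}{?}$ on the top row paired with $\overarc{\p_{l_2}}{?}$ on the bottom row — and show that there is exactly one way to attach the next pair of parallel arcs. First I would recall that the glueing scheme is built from arcs lying in the sets of parallel arcs $\Phi_i$, and that $\p_{k_2}$ and $\p_{l_2}$ are parallel points of $\M$, i.e. $\varphi_{\M}(s_{k_2})=\varphi_{\M}(s_{l_2})$ is one of the values attained at the parallel points; by Proposition \ref{PropAlgAngleFun}(iii) and Remark \ref{RemInflexionInScheme}, the interiors of $\overarc{\p_{k_1}}{\p_{k_2}}$ and $\overarc{\p_{l_1}}{\p_{l_2}}$ contain no inflexion points, so each of these arcs is a maximal monotone arc of $\varphi_{\M}$ between consecutive local extrema (or, if there are no inflexion points, between consecutive parallel points). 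The key point is that $\p_{k_2}$ sits at one end of such a monotone arc, hence it is either a local extremum of $\varphi_{\M}$ or (in the inflexion-free case) one of the chosen parallel points; in either situation there is a uniquely determined next monotone arc $\overarc{\p_{k_2}}{\p_{k_3}}$ of $\M$ emanating from $\p_{k_2}$ in the direction of increasing parameter, and likewise a unique $\overarc{\p_{l_2}}{\p_{l_3}}$ emanating from $\p_{l_2}$.

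Next I would argue that these two continuation arcs $\overarc{\p_{k_2}}{\p_{k_3}}$ and $\overarc{\p_{l_2}}{\p_{l_3}}$ actually lie in a common set of parallel arcs $\Phi_{i'}$. This is where non-convexity is used: because $\M$ is not convex, $\varphi_{\M}$ has at least two local extrema, and near the common value $\varphi_{\M}(s_{k_2})=\varphi_{\M}(s_{l_2})$ the function $\varphi_{\M}$ on the two monotone continuation arcs runs into the \emph{same} adjacent pair of extremal values. Concretely: the pair $\overarc{\p_{k_1}}{\p_{k_2}}$, $\overarc{\p_{l_1}}{\p_{l_2}}$ belongs to some $\Phi_i$, meaning $\varphi_{\M}$ on both maps diffeomorphically onto the interval between $\varphi_i$ and $\varphi_{i+1}$ with one endpoint equal to $\varphi_{\M}(s_{k_2})$; since the angle function is continuous and its local extrema on the two arcs at $\p_{k_2}$, $\p_{l_2}$ are of the same type (both maxima or both minima, because the arcs were parallel), Proposition \ref{PropAlgAngleFun}(iv) forces the continuation arcs to traverse the interval toward the \emph{other} neighbouring extremum value, and they do so in lockstep. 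Hence $\overarc{\p_{k_2}}{\p_{k_3}}$ and $\overarc{\p_{l_2}}{\p_{l_3}}$ both lie in the set of parallel arcs associated with that neighbouring extremum, so by Proposition \ref{PropDiffeoBetweenParallelArcs} the parallel-point map between them is a diffeomorphism and the prolonged scheme $\begin{array}{ccccc}\p_{k_1}&\frown&\p_{k_2}&\frown&\p_{k_3}\\\hline\p_{l_1}&\frown&\p_{l_2}&\frown&\p_{l_3}\\\hline\end{array}$ is well defined.

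Finally I would verify uniqueness and the condition $(k_1,l_1)\neq(k_3,l_3)$. Uniqueness follows because at $\p_{k_2}$ there is precisely one monotone continuation arc in the positive direction, and similarly at $\p_{l_2}$; any prolongation must use these, so the scheme is determined. For $(k_1,l_1)\neq(k_3,l_3)$: if we had $(k_1,l_1)=(k_3,l_3)$ then $\overarc{\p_{k_2}}{\p_{k_3}}=\overarc{\p_{k_2}}{\p_{k_1}}$, i.e. the continuation arc would just retrace $\overarc{\p_{k_1}}{\p_{k_2}}$ backward; but the continuation arc lies in the set of parallel arcs attached to the extremum on the \emph{opposite} side of the value $\varphi_{\M}(s_{k_2})$ from the one $\overarc{\p_{k_1}}{\p_{k_2}}$ came from (this is exactly where the two-sidedness guaranteed by having at least two extrema, hence non-convexity, matters), so it is a genuinely different arc. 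I expect the main obstacle to be the bookkeeping with the modular index conventions in Definition \ref{DefSetParallArcs} — in particular checking that "the next monotone arc of $\M$ from $\p_{k_2}$" is indexed correctly relative to the $\minm_{2m}$, $\maxm_{2m}$ conventions and the $k-l=\pm1 \bmod m$ condition, and that the degenerate wrap-around interval $(s_{2m-1},s_0)$ is handled — rather than any conceptual difficulty; the geometric content is entirely contained in Proposition \ref{PropAlgAngleFun}(iv) and the non-convexity hypothesis.
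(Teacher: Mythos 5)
Your argument covers only the case in which neither $\p_{k_2}$ nor $\p_{l_2}$ is an inflexion point of $\M$, and it breaks exactly in the remaining case, which is the one that actually needs an argument. The step that fails is the ``lockstep'' claim: you assert that the extrema ``at $\p_{k_2}$, $\p_{l_2}$ are of the same type (both maxima or both minima, because the arcs were parallel)'' and conclude that both continuations cross into the same adjacent set of parallel arcs. But parallel points only share the \emph{value} of $\varphi_{\M}$; an endpoint $s_k\in\varphi_{\M}^{-1}(\mathcal{M}(\varphi_{\M}))$ is generically a transversal crossing of an extremal value, not a local extremum, and for a generic curve $\p_{k_2}$ and $\p_{l_2}$ cannot both be inflexion points (two inflexion points with parallel tangents is a non-generic coincidence). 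So either both endpoints are transversal crossings of $\varphi_{i+1}$ --- then your argument works and both rows pass into $\Phi_{i+1}$ --- or exactly one of them, say $\p_{l_2}$, is an inflexion point. In that case $\varphi_{\M}$ turns around at $s_{l_2}$, so the forward arc of $\M$ issuing from $\p_{l_2}$ has its $\varphi$-values back in the old interval and lies in $\Phi_i$, while the forward arc from $\p_{k_2}$ lies in $\Phi_{i+1}$: these two arcs are \emph{not} in a common set of parallel arcs and cannot be glued to each other. Solving $\varphi_{\M}'(s_{k_2})(s-s_{k_2})=\tfrac{1}{2}\varphi_{\M}''(s_{l_2})(t-s_{l_2})^2+\dots$ on the incidence set $\{\varphi_{\M}(s)=\varphi_{\M}(t)\}$ shows what really happens: the row through the inflexion point continues monotonically while the \emph{other} row reverses and retraces its previous arc, giving a prolongation of the form $\begin{array}{ccccc}\p_{k_1}&\frown&\p_{k_2}&\frown&\p_{k_1}\\ \hline \p_{l_1}&\frown&\p_{l_2}&\frown&\p_{l_3}\\ \hline\end{array}$ with $l_3\neq l_1$. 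This is precisely the phenomenon your last paragraph rules out, and it is visible in the paper's example (\ref{GlueingSchemeEx1}), where $\p_1\frown\p_2\frown\p_3$ (through the inflexion point $\p_2$) sits over $\p_3\frown\p_4\frown\p_3$ (which retraces at $\p_4$). (The display in case (2) of the paper's proof swaps the two rows, but the case split itself is the substance of that proof.)

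Consequently your justification of $(k_1,l_1)\neq(k_3,l_3)$ --- ``no continuation arc ever retraces'' --- proves too much and is false. The correct statement is that at most \emph{one} of the two rows retraces, and that is where genericity enters: since $\p_{k_2}$ and $\p_{l_2}$ are not both inflexion points, at least one row moves to a new arc, so the new index pair differs from the old one. Two smaller slips: non-convexity does not give ``at least two local extrema of $\varphi_{\M}$'' (the curves $C_n$ of Theorem \ref{ThmConvex} with $n\geq 2$ are non-convex and have $\mathcal{M}(\varphi_{\M})=\emptyset$); what non-convexity supplies is $\#\mathcal{S}_{\M}\geq 4$, without which the prolonged pair coincides with $(k_1,l_1)$ exactly as it does for an oval in Remark \ref{RemGlueSchemeOval}. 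And the continuation need not be ``in the direction of increasing parameter'': within a single set of parallel arcs the two paired arcs may already be traversed with opposite orientations of $\M$, since the pairing is governed by the value of $\varphi_{\M}$, not by the arc-length parameter.
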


\begin{proof}

Let us consider
\begin{align}\label{GlueToProlong}
\begin{array}{ccc} 
\p_{k_1} &\frown &\p_{k_2} \\ \hline
\p_{l_1} &\frown &\p_{l_2} \\ \hline \end{array}.
\end{align}
Let $\mathcal{A}_1=\overarc{\p_{k_1}}{\p_{k_2}}\setminus\{\p_{k_1}, \p_{k_2}\}$,  $\mathcal{A}_2=\overarc{\p_{l_1}}{\p_{l_2}}\setminus\{\p_{l_1},\p_{l_2}\}$. By Remark \ref{RemInflexionInScheme} $\mathcal{A}_1$ and $\mathcal{A}_2$ must be curved in the same side or in the opposite sides at any parallel pair in $\mathcal{A}_1\cup\mathcal{A}_2$ (see Fig. \ref{PictureGoFurther}(i--ii)). Let us consider the case in Fig. \ref{PictureGoFurther}(i), the other case is similar. Then (\ref{GlueToProlong}) can be prolonged in the following two ways.
\begin{enumerate}[(1)]
\item Neither $\p_{k_2}$ nor $\p_{l_2}$ is an inflexion point of $\M$. Then (\ref{GlueToProlong}) can be prolonged to 
$\begin{array}{ccccc} 
\p_{k_1} &\frown &\p_{k_2}&\frown &\p_{k_3} \\ \hline
\p_{l_1} &\frown &\p_{l_2}&\frown &\p_{l_3} \\ \hline \end{array}$, where $k_1\neq k_3$ and $l_1\neq l_3$ (see Fig. \ref{PictureGoFurther}(iii)).
\item One of points $\p_{k_2}$, $\p_{l_2}$ is an inflexion point of $\M$. Let us assume that this is $\p_{k_2}$. Then (\ref{GlueToProlong}) can be prolonged to
$\begin{array}{ccccc} 
\p_{k_1} &\frown &\p_{k_2}&\frown &\p_{k_3} \\ \hline
\p_{l_1} &\frown &\p_{l_2}&\frown &\p_{l_1} \\ \hline \end{array}$, where $k_1\neq k_3$ (see Fig. \ref{PictureGoFurther}(iv)).
\end{enumerate}
Since $\M$ is generic, at least one of the points $\p_{k_2}$, $\p_{l_2}$ is not an inflexion point of $\M$.

\begin{figure}[h]
\centering
\includegraphics[scale=0.3]{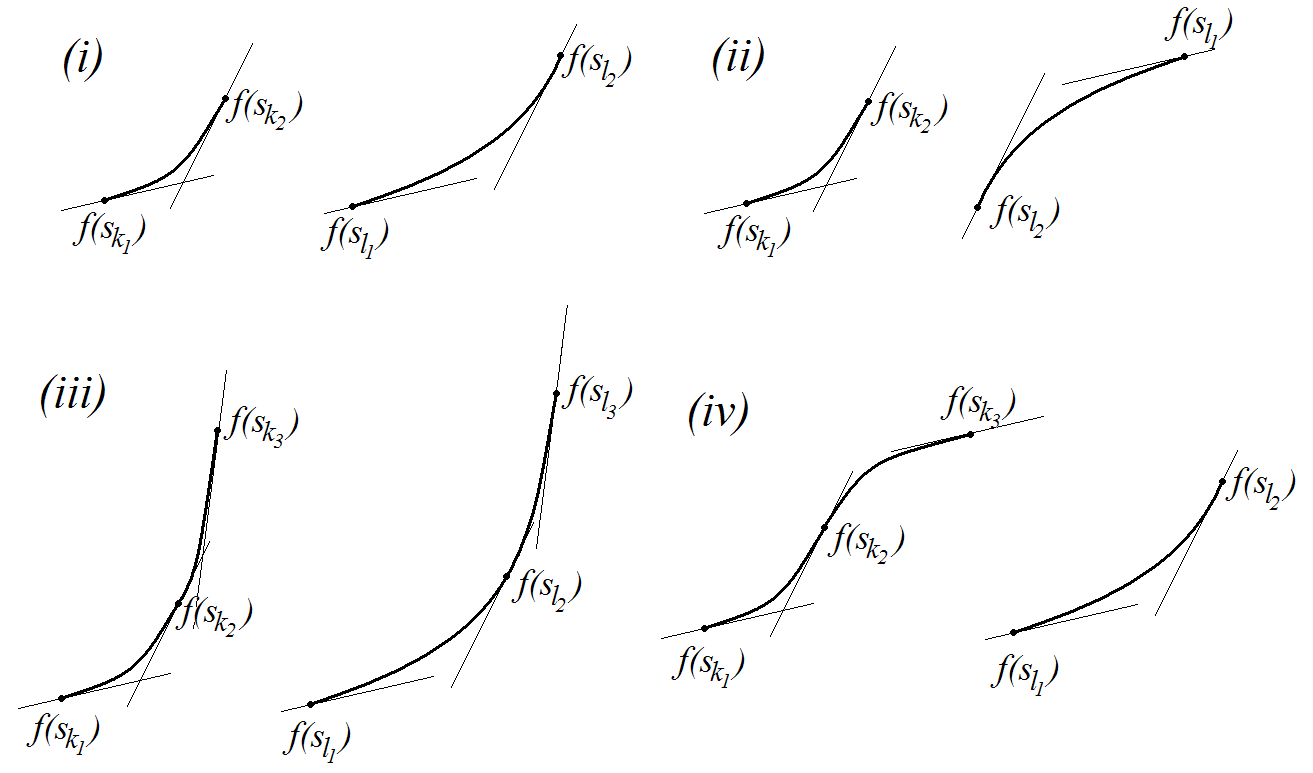}
\caption{Possible prolongations of an arc of a curve}
\label{PictureGoFurther}
\end{figure}

\end{proof}

\begin{rem}
To avoid repetition in the union in Definition \ref{DefGlueingScheme} we assume that no pair $\begin{array}{c} \p_k \\ \hline \p_l \\ \hline \end{array}$ except the beginning and the end can appear twice in the glueing scheme. Furthermore, if the pair $\begin{array}{c}\p_k \\ \hline \p_l \\ \hline \end{array}$ is in the glueing scheme than the pair $\begin{array}{c} \p_l \\ \hline \p_k \\ \hline \end{array}$ does not appear unless they are the beginning and the end of the scheme.
\end{rem}

The image of a glueing scheme under the $\frac{1}{2}$-point map $\pi_{\frac{1}{2}}$ represents parts of branches of the Wigner caustic. If we equip the set of all possible glueing schemes with the inclusion relation, then this set is partially ordered.

There is only finite number of arcs from which we can construct branches of $\Eq_{\frac{1}{2}}(\M)$. Therefore we can define a maximal glueing scheme.

\begin{defn}
A \textit{maximal glueing scheme} is a glueing scheme which is a maximal element of the set of all glueing schemes equipped with the inclusion relation.
\end{defn}

\begin{rem}\label{RemGlueSchemeOval}
If $\M$ is a generic regular convex curve, then the set of parallel arcs is equal to $\Phi_0=\{\overarc{\p_0}{\p_1}, \overarc{\p_1}{\p_0}\}$. Then the only maximal glueing scheme is 
$\begin{array}{ccc}
\p_0&\frown& \p_1\\ \hline
\p_1&\frown& \p_0\\ \hline\end{array}$

\end{rem}

\begin{prop}\label{PropChains}
The set of all glueing schemes equipped with the inclusion relation is the disjoint union of totally ordered sets.
\end{prop}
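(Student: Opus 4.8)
The plan is to deduce the statement from the unique--prolongation property recorded in Proposition~\ref{PropAlgAlwaysGoFurhter}, after first noting that the decomposition of a glueing scheme into blocks is intrinsic.

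\textbf{Step 1 (the block decomposition is canonical).} A glueing scheme $G=\begin{array}{ccccc}\p_{k_1}&\frown&\ldots&\frown&\p_{k_n}\\ \hline \p_{l_1}&\frown&\ldots&\frown&\p_{l_n}\\ \hline\end{array}$ is, by Definition~\ref{DefGlueingScheme}, the union of its blocks, glued at the \emph{joints} $\begin{array}{c}\p_{k_i}\\ \hline \p_{l_i}\\ \hline\end{array}$. Viewing $G$ as a subset of $\M\times\M$, I would show that its joints are exactly those parallel pairs $(a,b)\in G$ one of whose coordinates lies in the finite set $\{\p_0,\dots,\p_{k-1}\}=f(\mathcal{S}_{\M})$: by Remark~\ref{RemInflexionInScheme} no interior point of a block has such a coordinate, while for a joint both coordinates do (if $a\in f(\mathcal{S}_{\M})$ then $\varphi_{\M}(b)=\varphi_{\M}(a)$ is an extremal value of $\varphi_{\M}$, hence $b\in f(\mathcal{S}_{\M})$ as well). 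Thus the blocks of $G$ are the closures of the connected components of $G$ with the joints deleted, and this depends only on $G$ as a set. Consequently, if $G\subseteq G'$ then every block of $G$ is a block of $G'$, and since $G$ is connected these blocks occur consecutively in $G'$; i.e.\ comparability of glueing schemes is precisely the relation ``is a run of consecutive blocks of''.

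\textbf{Step 2 (each block lies in a unique maximal scheme).} For a generic convex $\M$ this is immediate from the explicit list in Remark~\ref{RemGlueSchemeOval}. For a non--convex generic $\M$, apply Proposition~\ref{PropAlgAlwaysGoFurhter} to the last block of a scheme: it can be prolonged by exactly one further block at its right end (the clause $(k_1,l_1)\neq(k_3,l_3)$ rules out only the trivial back--track), and applying the same proposition to $\M$ with reversed orientation it can be prolonged by exactly one block at its left end. Hence, starting from any block $B$, the bidirectional prolongation is forced at every step; since only finitely many arcs are available it must eventually revisit a joint, and the ``no repeated pair except the endpoints'' convention then terminates the scheme. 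This yields a well--defined maximal scheme $M_B$, and uniqueness of each prolongation step forces every maximal scheme containing $B$ to equal $M_B$. (For $\lambda=\tfrac12$ one additionally uses the identification of a pair with its transpose noted after Definition~\ref{DefGlueingScheme}; the argument is unchanged.)

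\textbf{Step 3 (assembling chains) and the main obstacle.} If two glueing schemes are comparable they share a block, hence by Step~2 they lie under the same maximal scheme; conversely, schemes under different maximal schemes share no block and so, by Step~1, are incomparable. Therefore the poset is the disjoint union, over maximal schemes $M$, of the sub--posets $\{G\mid G\subseteq M\}$. Fixing $M$ and the block at which it is built (for a closed $M$, any of its blocks, read off cyclically), each $G\subseteq M$ is a run of consecutive blocks obtained by prolonging that base block, and successive runs differ by a single forced prolongation step, so they are linearly ordered by inclusion; this makes $\{G\mid G\subseteq M\}$ a chain, and the whole poset a disjoint union of chains. The heart of the matter is Step~2 --- that prolongation is always possible and genuinely unique, so the construction of $M_B$ never branches and never stalls --- which is exactly Proposition~\ref{PropAlgAlwaysGoFurhter}, whose proof already handles the delicate case where an endpoint of an arc is an inflexion point of $\M$ (forcing the opposite row to turn back). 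Granting that proposition, all that remains is the set--theoretic bookkeeping of Steps~1 and~3, which uses genericity of $\M$ (e.g.\ that no inflexion point is a self--intersection).
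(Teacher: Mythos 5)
Your proposal follows the same route as the paper: the paper's entire proof of this proposition is the single sentence that it ``follows from uniqueness of the prolongation of the glueing scheme (see Proposition~\ref{PropAlgAlwaysGoFurhter})'', and your Steps 1--3 are an attempt to spell out the bookkeeping behind that citation. The essential content of your argument --- everything reduces to the unique prolongation of Proposition~\ref{PropAlgAlwaysGoFurhter} --- is exactly what the authors intend.

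That said, your elaboration asserts something the paper's terse proof carefully avoids, and which is not literally true. If a maximal scheme $M$ consists of consecutive blocks $B_1,B_2,B_3,B_4$, then the glueing schemes $B_1\frown B_2$ and $B_2\frown B_3$ are both contained in $M$, share the block $B_2$, and are \emph{incomparable} under inclusion, while both are comparable with $B_1\frown B_2\frown B_3$. Hence $\{G\mid G\subseteq M\}$ is not totally ordered, contrary to your Step~3, and comparability is not an equivalence relation on arbitrary runs of consecutive blocks. The same problem is already created by the ``bidirectional'' prolongation you introduce in Step~2: from the single block $B_2$ one may prolong either to $B_1\frown B_2$ or to $B_2\frown B_3$, which destroys the uniqueness you need. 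The statement only holds under the reading the algorithm actually uses: a glueing scheme is grown from a fixed initial arc by repeatedly applying the one-sided, unique prolongation of Proposition~\ref{PropAlgAlwaysGoFurhter}, and the schemes so obtained from a given initial arc form a chain terminating in the unique maximal scheme containing that arc. If you either adopt that restricted notion of glueing scheme or restate the conclusion as ``every glueing scheme extends to a unique maximal one'' (which is all that is used later, e.g.\ in Lemma~\ref{LemPropMaxGlueSchemes} and Theorem~\ref{ThmGlueSchemeIsBranch}), your argument goes through and coincides with the paper's.
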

\begin{proof}
It follows from uniqueness of the prolongation of the glueing scheme (see Proposition \ref{PropAlgAlwaysGoFurhter}).

\end{proof}

\begin{lem}\label{LemPropMaxGlueSchemes}
Let $f:S^1\mapsto\mathbb{R}^2$ be the arc length parameterization of $\M$. Then
\begin{enumerate}[(i)]
\item for every two different arcs $\overarc{\p_{k_1}}{\p_{k_2}}$, $\overarc{\p_{l_1}}{\p_{l_2}}$ in $\Phi_i$ there exists exactly one maximal glueing scheme containing 
$\begin{array}{ccc}
\p_{k_1} &\frown &\p_{k_2} \\ \hline
\p_{l_1} &\frown &\p_{l_2} \\ \hline \end{array}$
or
$\begin{array}{ccc}
\p_{k_2} &\frown &\p_{k_1} \\ \hline
\p_{l_2} &\frown &\p_{l_1} \\ \hline \end{array}$
or
$\begin{array}{ccc}
\p_{l_1} &\frown &\p_{l_2} \\ \hline
\p_{k_1} &\frown &\p_{k_2} \\ \hline \end{array}$
or
$\begin{array}{ccc} 
\p_{l_2} &\frown &\p_{l_1} \\ \hline
\p_{k_2} &\frown &\p_{k_1} \\ \hline \end{array}$.

\item every maximal glueing scheme is in the following form $\begin{array}{ccccc}
\p_k &\frown&\ldots&\frown&\p_{k'} \\ \hline
\p_l &\frown&\ldots&\frown&\p_{l'} \\ \hline \end{array}$, where $\{p_k, p_l\}=\{p_{k'},p_{l'}\}$ whenever $p_k\neq p_l$ and $p_{k'}\neq p_{l'}$.

\item if $\p_k$ is an inflexion point of $\M$, then there exists a maximal glueing scheme which is in the form 
\begin{align*}
\begin{array}{ccccccccc}
\p_k &\frown&\p_{k_1}&\frown&\ldots&\frown&\p_{k_n}&\frown&\p_l \\ \hline
\p_k &\frown&\p_{l_1}&\frown&\ldots&\frown&\p_{l_n}&\frown&\p_l \\ \hline \end{array},
\end{align*}
where $\p_l$ is a different inflexion point of $\M$ and $p_{k_i}\neq p_{l_i}$ for $i=1, 2, \ldots, n$.

\end{enumerate}
\end{lem}
\begin{proof}
(i) is a consequence of the uniqueness of the prolongation of a glueing scheme (see Proposition \ref{PropAlgAlwaysGoFurhter}).

The proof of (ii) follows from (i) and the fact that the following equalities hold: $\begin{array}{ccc}
\p_{k_1} &\frown &\p_{k_2} \\ \hline
\p_{l_1} &\frown &\p_{l_2} \\ \hline \end{array}=\begin{array}{ccc}
\p_{k_2} &\frown &\p_{k_1} \\ \hline
\p_{l_2} &\frown &\p_{l_1} \\ \hline \end{array}$ and $\begin{array}{ccc}
\p_{k_1} &\frown &\p_{k_2} \\ \hline
\p_{l_1} &\frown &\p_{l_2} \\ \hline \end{array}=\begin{array}{ccc}
\p_{l_1} &\frown &\p_{l_2} \\ \hline 
\p_{k_1} &\frown &\p_{k_2} \\ \hline\end{array}$.

To prove (iii) let us prolong $\begin{array}{ccc}
\p_{k} &\frown &\p_{k_1} \\ \hline
\p_{k} &\frown &\p_{l_1} \\ \hline \end{array}$ to the maximal glueing scheme $\mathcal{G}$. Any point $p_l$ in the sequence of division points $\mathcal{S}_{\M}$ belongs to exactly two arcs in all sets of parallel arcs. Then by (ii) this maximal glueing scheme is in the following form
\begin{align}\label{SymGS}
\begin{array}{ccccccccc}
\p_k &\frown&\p_{k_1}&\frown&\ldots&\frown&\p_{k_n}&\frown&\p_l \\ \hline
\p_k &\frown&\p_{l_1}&\frown&\ldots&\frown&\p_{l_n}&\frown&\p_l \\ \hline \end{array},
\end{align}
If \eqref{SymGS} would contain some other inflexion point $\p_r$ in the middle, then \eqref{SymGS} would contain the following part: $$\begin{array}{ccccccccc}
\p_{r'} &\frown &\p_{r}&\frown&\p_{r''} \\ \hline
\p_{r''} &\frown &\p_{r}&\frown&\p_{r'}\\ \hline \end{array}$$
which is impossible by (i).

\end{proof}

\begin{thm}\label{ThmGlueSchemeIsBranch} 
The image of every maximal glueing scheme of $\M$ under the $\frac{1}{2}$-point map $\pi_{\frac{1}{2}}$ is a branch of the Wigner caustic of $\M$ and all branches of the Wigner caustic can be obtain in this way.
\end{thm}
\begin{proof}
Let $f:S^1\to\mathbb{R}^2$ be the arc length parameterization of $\M$.

It is easy to see that 
\begin{align*}
S^1=\bigsqcup_{s\in\mathcal{S}_{\M}}\{s\}\sqcup\bigcup_{i}\bigsqcup_{(k,l)\in\Phi_i} (s_{\minm_{2m}(k,l)},s_{\maxm_{2m}(k,l)})
\end{align*}
and then
\begin{align*}
M=\bigcup_{s\in\mathcal{S}_{\M}}\{f(s)\}\cup\bigcup_{i}\bigcup_{(k,l)\in\Phi_i} f\Big((s_{\minm_{2m}(k,l)},s_{\maxm_{2m}(k,l)})\Big),
\end{align*}
where $\sqcup$ denotes the disjoint union. Then by Proposition \ref{PropDiffeoBetweenParallelArcs} we obtain that

\begin{align}\label{EqUnionOfArcs}
\Eq_{\frac{1}{2}}(\M)=\bigcup_i\bigcup_{\substack{\overarc{\p_{k}}{\p_{l}}, \overarc{\p_{k'}}{\p_{l'}}\in\Phi_i \\ \overarc{\p_{k}}{\p_{l}}\neq\overarc{\p_{k'}}{\p_{l'}}}} \Eq_{\frac{1}{2}}\left(\overarc{\p_{k}}{\p_{l}}\cup\overarc{\p_{k'}}{\p_{l'}}\right).
\end{align}

Since $\displaystyle \Eq_{\frac{1}{2}}\left(\overarc{\p_{k}}{\p_{l}}\cup\overarc{\p_{k'}}{\p_{l'}}\right)=\pi_{\frac{1}{2}}\left(\begin{array}{ccc}
\p_{k} &\frown &\p_{l} \\ \hline
\p_{k'} &\frown &\p_{l'} \\ \hline \end{array}\right)=\pi_{\frac{1}{2}}\left(\begin{array}{ccc}
\p_{k'} &\frown &\p_{l'} \\ \hline
\p_{k} &\frown &\p_{l} \\ \hline \end{array}\right)$ and every arc $\begin{array}{ccc}
\p_{k} &\frown &\p_{l} \\ \hline
\p_{k'} &\frown &\p_{l'} \\ \hline \end{array}$ is in exactly one maximal glueing scheme, then every branch of the Wigner caustic is the image of a maximal glueing scheme under the $\frac{1}{2}$-point map $\pi_{\frac{1}{2}}$.

\end{proof}

As a summary of this section we present an algorithm to find all maximal glueing schemes.

\begin{alg}(Finding all maximal glueing schemes of a generic regular closed curve $\M$ parametrized by $f:S^1\to\mathbb{R}^2$)
\begin{enumerate}[(1)]
\item Find the set of local extrema of the angle function $\varphi_{\M}$ of $\M$ (see Definition \ref{DefAngleFunctin} and Definition \ref{DefLocalExtrema}).
\item Find the sequence of local extrema (see Definition \ref{DefSeqOfLocExtr}).
\item Find the sequence of division points (see Definition \ref{DefSeqOfParallPts}).
\item Find the sets of parallel arcs $\Phi_i$ (see Definition \ref{DefSetParallArcs}).
\item Create the following set 
\begin{align*}
\Lambda:=\Big\{&\big\{\overarc{\p_{k_1}}{\p_{l_1}}, \overarc{\p_{k_2}}{p_{l_2}}\big\}: \overarc{\p_{k_1}}{\p_{l_1}}\neq\overarc{\p_{k_2}}{\p_{l_2}}, \\ 
& \exists_i\ \big(\overarc{\p_{k_1}}{\p_{l_1}}\in\Phi_i\wedge \overarc{\p_{k_2}}{\p_{l_2}}\in\Phi_i\big)\vee\big(\overarc{\p_{l_1}}{\p_{k_1}}\in\Phi_i\wedge \overarc{\p_{l_2}}{\p_{k_2}}\in\Phi_i\big)\Big\}.
\end{align*}
\item If there exists a number $k$ such that $\p_k$ is an inflexion point of $\M$ and there exists the set of arcs $\left\{\overarc{\p_{k}}{\p_{l_1}}, \overarc{\p_k}{\p_{l_2}}\right\}$ or $\left\{\overarc{\p_{l_1}}{\p_k}, \overarc{\p_{l_2}}{\p_k}\right\}$ in $\Lambda$, create a glueing scheme $\begin{array}{ccc} \p_k & \frown & \p_{l_1} \\ \hline \p_k &\frown &\p_{l_2} \\ \hline\end{array}$, remove the used set of arcs from $\Lambda$ and go to step $(7)$. Otherwise go to step $(8)$.
\item If the created glueing scheme is of the form $\begin{array}{ccc} \ldots &\frown & \p_{k_1} \\ \hline\ldots &\frown &\p_{l_1}\\ \hline\end{array}$ and there exists the set of arcs $\left\{\overarc{\p_{k_1}}{\p_{k_2}}, \overarc{\p_{l_1}}{\p_{l_2}}\right\}$ or $\left\{\overarc{\p_{k_2}}{\p_{k_1}}, \overarc{\p_{l_2}}{\p_{l_1}}\right\}$ in $\Lambda$, then prolong the scheme to the following scheme $\begin{array}{ccccc}
\ldots &\frown & \p_{k_1} &\frown &\p_{k_2} \\ \hline
\ldots &\frown & \p_{l_1} &\frown &\p_{l_2} \\ \hline \end{array}$, remove the used set of arcs from $\Lambda$ and go to step $(7)$, otherwise the considered glueing scheme is a maximal glueing scheme and then go to step $(6)$.
\item If $\Lambda$ is empty, then all maximal glueing schemes for $\Eq_{\frac{1}{2}}(\M)$ were created, otherwise find any set of arcs $\left\{\overarc{\p_{k_1}}{\p_{l_1}}, \overarc{\p_{k_2}}{\p_{l_2}}\right\}$ in $\Lambda$, create a glueing scheme  $\begin{array}{ccc} \p_{k_1} & \frown & \p_{l_1} \\ \hline \p_{k_2} &\frown &\p_{l_2} \\ \hline\end{array}$, remove the used set of arcs from $\Lambda$ and go to step $(7)$. 
\end{enumerate}
\end{alg}

\subsection{An example of construction of branches of the Wigner caustic}\label{SectionExample}$ $

Let $\M$ be a curve illustrated in Fig. \ref{PictureAlgFindingPhiSets}. Then the sets of parallel arcs are as follows

\begin{align*}
\Phi_0  &=\left\{\overarc{\p_0}{\p_1}, \overarc{\p_4}{\p_5}\right\},\\
\Phi_1 &=\left\{\overarc{\p_1}{\p_2}, \overarc{\p_3}{\p_2}, \overarc{\p_3}{\p_4}, \overarc{\p_5}{\p_0}\right\}.
\end{align*}

Then there exist two maximal glueing schemes of $\M$:
\begin{align}\label{GlueingSchemeEx3}
\begin{array}{ccccccccc} 
\p_0&\frown &\p_1&\frown &\p_2&\frown &\p_3&\frown &\p_4 \\ \hline
\p_4&\frown &\p_5&\frown &\p_0&\frown &\p_5&\frown &\p_0 \\ \hline 
\end{array},\\ \label{GlueingSchemeEx4}
\begin{array}{ccccccc} 
\p_2&\frown &\p_1&\frown &\p_2&\frown &\p_3 \\ \hline
\p_2&\frown &\p_3&\frown &\p_4&\frown &\p_3 \\ \hline 
\end{array}.
\end{align}

By Proposition \ref{PropAlgParityOfCuspsInBranch} the number of cusps of the branch which correspond to (\ref{GlueingSchemeEx3}) is odd.
By Corollary \ref{CorAlgNumberOfInflInEachComponent}  in the glueing scheme (\ref{GlueingSchemeEx3}) there are two parallel pairs containing an inflexion point of $\M$ -- the pairs: 
\begin{align*}
\begin{array}{ccccccccccccccccc} 
\ldots &	\frown	& 	\p_2	&	\frown & \p_3	&	\frown & \ldots\\ \hline
\ldots &	\frown	& 	\p_0	&	\frown & \p_5	&	\frown & \ldots\\ \hline
\end{array}.
\end{align*}
Therefore this branch of the Wigner caustic has exactly two inflexion points -- see Fig. \ref{PictureAlgExEasy}(ii). The same conclusion holds for the glueing scheme (\ref{GlueingSchemeEx4}) and the branch in Fig. \ref{PictureAlgExEasy}(i). In this case we exclude the first and the last parallel pair.

\begin{figure}[h]
\centering
\includegraphics[scale=0.35]{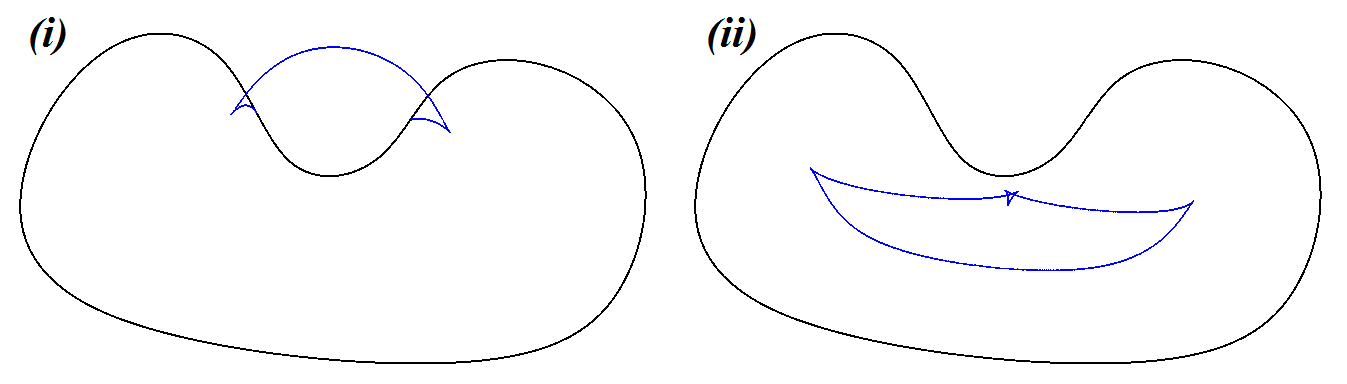}
\caption{A curve $\M$ as in Fig. \ref{PictureAlgFindingPhiSets} and different branches of $\Eq_{\frac{1}{2}}(\M)$}
\label{PictureAlgExEasy}
\end{figure}


\section{The geometry of the Wigner caustic of regular curves}\label{SectionOnShellGeometryOfConvex} 

In this section we start with propositions on numbers of inflexion points and cusp singularities of the Wigner caustic which follows from properties of maximal glueing schemes introduced in Section \ref{SectionAlgorithm}.

\begin{prop}\label{PropAlgPartBetweenIflPt}
Let $\M$ be a generic regular closed curve. If $\M$ has $2n$ inflexion points then there exist exactly $n$ smooth branches of  $\Eq_{\frac{1}{2}}(\M)$ connecting pairs of inflexion points on $\M$ and every inflexion point of $\M$ is the end of exactly one branch of $\Eq_{\frac{1}{2}}(\M)$. Other branches of $\Eq_{\frac{1}{2}}(\M)$ are closed curves.
\end{prop}
\begin{proof}
It is a consequence of Lemma \ref{LemPropMaxGlueSchemes} and Theorem \ref{ThmGlueSchemeIsBranch}.

\end{proof}

\begin{lem}\label{PropCuspsEq}
Let $\C$ be a closed smooth curve with at most cusp singularities. If the rotation number of $\C$ is an integer, then the number of cusps of $\C$ is even and if the rotation number of $\C$ is a half-integer, then the number of $\C$ is odd.
\end{lem}

\begin{proof}

A continuous normal vector field to the germ of a curve with the cusp singularity is directed outside the cusp on the one of two connected regular components and is directed inside the cusp on the other component as it is illustrated in Fig. \ref{PictureNormalVectorToCusp}. That observation end the proof.

\begin{figure}[h]
\centering
\includegraphics[scale=0.16]{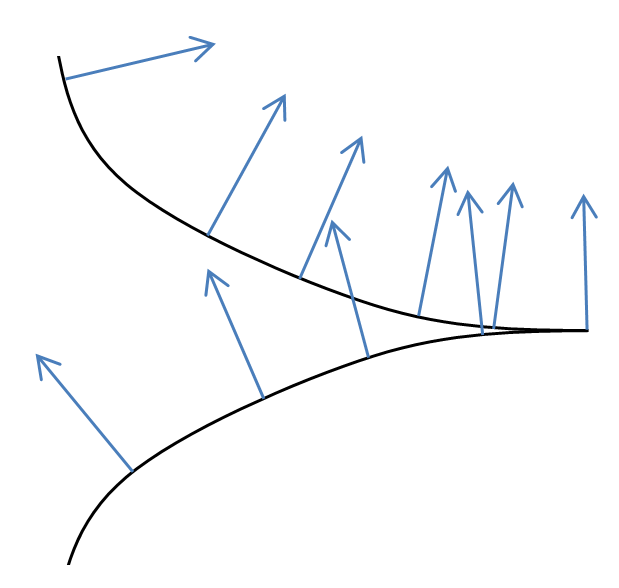}
\caption{A continuous normal vector field at a cusp singularity}
\label{PictureNormalVectorToCusp}
\end{figure}

\end{proof}

\begin{prop}\label{PropAlgParityOfCuspsInBranch}
Let $\M$ be a generic regular closed curve. Let $\mathbbm{n}_{\M}$ be a unit continuous normal vector field to $\M$. Let $\C$ be a smooth branch of $\Eq_{\frac{1}{2}}(\M)$ which does not connect inflexion points. Then the number of cusps of $\C$ is odd if and only if the maximal glueing scheme of $\C$ is in the following form
$\begin{array}{ccccc}
\p_k	&\frown&	 \ldots 	&\frown& \p_l \\ \hline
\p_l 	&\frown&	\ldots 	&\frown& \p_k \\ \hline
\end{array}$ 
and $\mathbbm{n}_{\M}(p_l)=-\mathbbm{n}_{\M}(p_k)$.
\end{prop}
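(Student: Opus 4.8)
The statement is about parity of the number of cusps of a branch $\C$ of the Wigner caustic that does not connect inflexion points, so by Proposition \ref{PropAlgPartBetweenIflPt}(i) such a branch is a closed curve and by Lemma \ref{LemPropMaxGlueSchemes}(ii) its maximal glueing scheme has the form $\begin{array}{ccccc}\p_k&\frown&\ldots&\frown&\p_{k'}\\ \hline \p_l&\frown&\ldots&\frown&\p_{l'}\\ \hline\end{array}$ with $\{\p_k,\p_l\}=\{\p_{k'},\p_{l'}\}$. There are two cases: either $(\p_{k'},\p_{l'})=(\p_k,\p_l)$, or $(\p_{k'},\p_{l'})=(\p_l,\p_k)$. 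I would argue, exactly as in the proof of Theorem \ref{PropCuspsEq}, that the parity of the number of cusps of $\C$ is governed by whether the rotation number of the continuous normal vector field $\mathbbm{n}_{\C}$ (Definition \ref{DefNormalVectorEq}, \ref{DefRotationNumber}) is an integer or a half-integer: by the local model of a cusp (Fig. \ref{PictureNormalVectorToCusp}), crossing a cusp flips the ``inside/outside'' sense of the normal field, so the rotation number is an integer iff the number of cusps is even. So the whole statement reduces to: the rotation number of $\C$ is a half-integer (equivalently, $\C$ has an odd number of cusps) precisely when the scheme closes up as $\p_k\frown\cdots\frown\p_l$ over $\p_l\frown\cdots\frown\p_k$ \emph{and} $\mathbbm{n}_{\M}(\p_l)=-\mathbbm{n}_{\M}(\p_k)$.

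**Key steps.** First, set up the parameterization of $\C$ coming from the glueing scheme: as in Theorem \ref{PropCuspsEq}, a maximal glueing scheme $\begin{array}{ccc}\p_k&\frown&\cdots\\ \hline \p_l&\frown&\cdots\\ \hline\end{array}$ traces out $\C$ via $s\mapsto \tfrac12 f(s)+\tfrac12 P(f(s))$ where the upper row supplies $f(s)$ and the lower row supplies $P(f(s))$, the parallel partner; the normal vector at the image point is $\mathbbm{n}_{\M}(f(s))$. Following the scheme once around a closed branch, I track the total turning of $\mathbbm{n}_{\M}(f(s))$ along the upper row. Second, I distinguish the two closure types. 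If the scheme closes as $\p_k\frown\cdots\frown\p_k$ over $\p_l\frown\cdots\frown\p_l$, then $f(s)$ returns to the \emph{same} point $\p_k$ with the \emph{same} tangent direction, so $\mathbbm{n}_{\M}$ has turned by a multiple of $2\pi$ — integer rotation number, hence an even number of cusps; this is the ``otherwise'' case of the proposition. If the scheme closes as $\p_k\frown\cdots\frown\p_l$ over $\p_l\frown\cdots\frown\p_k$, then in tracing the upper row, $f(s)$ goes from $\p_k$ to $\p_l$ along an arc, and — since the branch is closed — the two endpoint pairs $(\p_k,\p_l)$ and $(\p_l,\p_k)$ are identified as the \emph{same} point of $\C$, with the same unoriented tangent line (both equal the tangent line of $\M$ at $\p_k$ and at $\p_l$, which are parallel). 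The normal direction of $\M$ at $\p_k$ and at $\p_l$ is then either equal or opposite; the total turning of $\mathbbm{n}_{\M}$ along the arc from $\p_k$ to $\p_l$ is a multiple of $\pi$ but an integer multiple of $2\pi$ iff $\mathbbm{n}_{\M}(\p_l)=\mathbbm{n}_{\M}(\p_k)$, and an odd multiple of $\pi$ iff $\mathbbm{n}_{\M}(\p_l)=-\mathbbm{n}_{\M}(\p_k)$. Third, combine: the rotation number of $\C$ is a half-integer (odd cusp count) exactly when the scheme is of the swapped form \emph{and} $\mathbbm{n}_{\M}(\p_l)=-\mathbbm{n}_{\M}(\p_k)$; otherwise it is an integer (even cusp count). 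This is precisely the claimed equivalence.

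**Main obstacle.** The delicate point is making the ``tracing the scheme gives a well-defined closed parameterization of $\C$ whose normal field has the claimed total turning'' argument rigorous — in particular, that the arc from $\p_k$ to $\p_l$ read off the upper row, together with the normal field $\mathbbm{n}_{\M}$ restricted to it, really captures the full rotation of $\mathbbm{n}_{\C}$ once around the closed branch $\C$ (and that no extra half-turn is hidden in the lower row or at the gluing joints $\p_{k_i}$, where the scheme switches which arc plays the role of $f$). This needs Proposition \ref{PropDiffeoBetweenParallelArcs} to guarantee the partner map $P$ is a diffeomorphism on each set of parallel arcs, so that $\mathbbm{n}_{\C}$ extends continuously (including through cusps, by Definition \ref{DefNormalVectorEq} and the remark following it) along the whole branch, and a careful bookkeeping at the endpoints to see that the identification of $(\p_k,\p_l)$ with $(\p_l,\p_k)$ in the swapped case is compatible with the orientation induced by $\mathbbm{n}_{\C}$. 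Everything else — the local cusp model and the parity-versus-rotation-number dictionary — is imported verbatim from the proof of Theorem \ref{PropCuspsEq}.
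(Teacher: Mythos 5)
Your proposal is correct and follows essentially the same route as the paper: the paper's proof likewise reduces the parity of the cusp count to whether the rotation number of the normal field $\mathbbm{n}_{\C}$ (traced via the maximal glueing scheme, as in the proofs of Theorem \ref{PropCuspsEq} and Proposition \ref{PropAlgEvenNoOfCuspAndInfl}) is an integer or a half-integer, concluding that it is a half-integer exactly when the scheme closes in the swapped form with $\mathbbm{n}_{\M}(\p_l)=-\mathbbm{n}_{\M}(\p_k)$. Your write-up is in fact more explicit than the paper's about the case analysis and the bookkeeping at the gluing joints.
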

\begin{proof}
If the normal vectors to $\M$ at $\p_{k}$ and $\p_{l}$ are opposite, then the rotation number of $\C$ is equal to $\frac{r}{2}$, where $r$ is an odd integer. By Lemma \ref{PropCuspsEq} the number of cusps in $\C$ is odd. Otherwise the rotation number of $\C$ is an integer, therefore the number of cusps of $\C$ is even.

\end{proof}

By Proposition \ref{PropInflOfEq}, Corollary \ref{PropMisEven} and Proposition \ref{PropAlgPartBetweenIflPt} we get the following corollaries on inflexion points of branches of the Wigner caustic of $\M$.

\begin{cor}\label{CorAlgNumberOfInflInEachComponent}
Let $\M$ be a generic regular closed curve. Let $\C$ be a smooth branch of the Wigner caustic of $\M$. Then the number of inflexion points of $\C$ is equal to the number of parallel pairs containing an inflexion point of $\M$ in the maximal glueing scheme for $\C$ unless they are the beginning or the end of the maximal glueing scheme which connects the inflexion points of $\M$.
\end{cor}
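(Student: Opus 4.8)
\emph{Plan of proof.} This corollary is, in essence, Proposition \ref{PropInflOfEq} read off a maximal glueing scheme: one parameterises the branch by its scheme, locates where inflexion points of $\M$ can occur along it, applies Proposition \ref{PropInflOfEq}, and finally deals separately with the degenerate nodes sitting at inflexion points of $\M$ --- which is where the exception in the statement comes from.

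First I would invoke Theorem \ref{ThmGlueSchemeIsBranch} to write $\C=\pi_{\lambda}(\mathcal{G})$ for a maximal glueing scheme $\mathcal{G}$, regarded as a chain of nodes $\begin{array}{c}\p_{k_j}\\ \hline \p_{l_j}\\ \hline\end{array}$ joined by arcs from common sets of parallel arcs, and parameterise $\C$ by letting a parallel pair $(a,b)$ of $\M$ run along these arcs (Proposition \ref{PropDiffeoBetweenParallelArcs}), the corresponding point of $\C$ being $\lambda a+(1-\lambda)b$. By Remark \ref{RemInflexionInScheme} an arc of a set of parallel arcs has no inflexion point of $\M$ in its interior, so along this parameterisation $a$ or $b$ is an inflexion point of $\M$ only at the parameter values hitting a node of $\mathcal{G}$; and by genericity no inflexion point of $\M$ is a self-intersection point and no two of them form a parallel pair, so a node carries an inflexion point of $\M$ precisely when the corresponding parallel pair of $\mathcal{G}$ contains one.

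Next, for a node whose parallel pair $(a,b)$ is non-degenerate ($a\neq b$), Proposition \ref{PropInflOfEq} tells us that $\lambda a+(1-\lambda)b$ is an inflexion point of $\Eq_{\lambda}(\M)$ if and only if $a$ or $b$ is an inflexion point of $\M$; and such a point is a regular point of the smooth branch $\C$, since by Proposition \ref{PropSingularPointOfEq} a cusp there would force $\lambda=0$, while by Lemma \ref{LemParallelCurvature}(ii) the curvature of $\C$ at it equals $0$. Hence each non-degenerate ``inflexion node'' contributes exactly one genuine inflexion point of $\C$, and, again by Remark \ref{RemInflexionInScheme} and Proposition \ref{PropInflOfEq}, no inflexion points of $\C$ arise in the interiors of arcs. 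For $\lambda=\frac{1}{2}$ the two points $\lambda a+(1-\lambda)b$ and $(1-\lambda)a+\lambda b$ produced by Proposition \ref{PropInflOfEq} coincide, so the node is counted once; for $\lambda\neq\frac{1}{2}$ the reversed node $\begin{array}{c}\p_{l_j}\\ \hline \p_{k_j}\\ \hline\end{array}$ lies in a different maximal glueing scheme, hence on a different branch, so there is no double counting either way.

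Finally I would treat the degenerate nodes $\begin{array}{c}\p_k\\ \hline \p_k\\ \hline\end{array}$. By Lemma \ref{LemPropMaxGlueSchemes}(iii)--(iv) these occur exactly when $\p_k$ is an inflexion point of $\M$ lying on $\C$, and they map under $\pi_{\lambda}$ to the point $\lambda\p_k+(1-\lambda)\p_k=\p_k$ itself. For $\lambda\neq\frac{1}{2}$ the branch is a closed curve (Proposition \ref{PropAlgPartBetweenIflPt}(ii)), such a node is an interior point of $\C$, and by Remark \ref{RemInflPoint} $\Eq_{\lambda}(\M)$ does have an inflexion point at $\p_k$, so it is counted, as required. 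For $\lambda=\frac{1}{2}$, however, a branch carrying an inflexion point of $\M$ connects two of them, $\p_k$ and $\p_l$ (Proposition \ref{PropAlgPartBetweenIflPt}(i)), and by Lemma \ref{LemPropMaxGlueSchemes}(iv) its maximal glueing scheme begins with $\begin{array}{c}\p_k\\ \hline \p_k\\ \hline\end{array}$ and ends with $\begin{array}{c}\p_l\\ \hline \p_l\\ \hline\end{array}$; these map to the points $\p_k,\p_l$, which by Remark \ref{RemInflPoint} are the (singular) \emph{endpoints} of the Wigner caustic branch rather than interior inflexion points, and so must be discarded --- precisely the exception stated, and Lemma \ref{LemPropMaxGlueSchemes} guarantees there are no other degenerate nodes to worry about. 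Combining the three cases yields the claimed equality. The part I expect to require the most care, and would write out in full, is exactly this bookkeeping at the two ends of a Wigner caustic on shell branch, together with the verification that every interior inflexion node is a regular point of $\C$; everything else is a mechanical reformulation of Proposition \ref{PropInflOfEq} and Remark \ref{RemInflexionInScheme}.
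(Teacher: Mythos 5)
Your argument is essentially the paper's own derivation: the corollary is obtained there directly from Proposition \ref{PropInflOfEq} combined with the structure of maximal glueing schemes (Remark \ref{RemInflexionInScheme}, Lemma \ref{LemPropMaxGlueSchemes}, Proposition \ref{PropAlgPartBetweenIflPt} and Remark \ref{RemInflPoint}), and your write-up is a correct, fuller elaboration of exactly that bookkeeping, including the right treatment of the two endpoint nodes of a Wigner caustic on shell branch. One small factual slip: for $\lambda\neq\frac{1}{2}$ the reversed node $\begin{array}{c}\p_{l_j}\\ \hline \p_{k_j}\\ \hline\end{array}$ need \emph{not} lie in a different maximal glueing scheme (the scheme of Lemma \ref{LemPropMaxGlueSchemes}(iii), and also scheme (\ref{GlueingSchemeEx1}) in the example, contain each node together with its reverse), but this is harmless because $\pi_{\lambda}$ sends the two nodes to two distinct inflexion points of the branch, so the claimed count is unaffected.
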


\begin{cor}\label{PropNumOfInflPoints}
Let $\M$ be a generic regular closed curve. Let $2n>0$ be the number of inflexion points of $\M$ and let $\#\mathcal{S}_{\M}=2m$. Then $\Eq_{\frac{1}{2}}(\M)$ has $2m-2n$ inflexion points.
\end{cor}

Now we study the properties of the Wigner caustic on shell, i.e. the branch of the Wigner caustic connecting two inflexion points, see Fig. \ref{PictureAlgExEasy}(i). We are interested in the parity of the number of cusps and the parity of the number of inflexion points on this branch.

\begin{thm}\label{ThmCuspsWCBetweenInfl}
Let $\M$ be a generic regular closed curve. Let $S^1\ni s\mapsto f(s)\in\mathbb{R}^2$ be a parameterization of $\M$, let $f(t_1)$, $f(t_2)$ be inflexion points of $\M$ and let $\C$ be a branch of the Wigner caustic of $\M$ which connects $f(t_1)$ and $f(t_2)$. Then the number of cusps of $\C$ is odd if and only if exactly one of the inflexion points $f(t_1), f(t_2)$ is a singular point of the curve $\C\cup f\big([t_1,t_2]\big)$.
\end{thm}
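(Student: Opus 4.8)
The plan is to transfer the cusp count on $\C$ to a winding/rotation-number computation using the normal vector field along a suitable closed loop, in the same spirit as the proof of Theorem \ref{PropCuspsEq} and Proposition \ref{PropAlgParityOfCuspsInBranch}. The branch $\C$ is the image of a maximal glueing scheme $\begin{array}{ccccc}\p_{k}&\frown&\ldots&\frown&\p_{l}\\\hline\p_{l}&\frown&\ldots&\frown&\p_{k}\\\hline\end{array}$ with $f(t_1),f(t_2)$ the two inflexion endpoints (Lemma \ref{LemPropMaxGlueSchemes}(iv), Proposition \ref{PropAlgPartBetweenIflPt}(i)). First I would parametrize $\C$ by the midpoint map $s\mapsto\tfrac12\big(f(s)+P(f(s))\big)$ along the arc $f([t_1,t_2])$, where $P$ is the ``parallel partner'' map induced by the glueing scheme; note that as $s$ runs from $t_1$ to $t_2$, $P(f(s))$ also runs over the complementary arc, and at the two endpoints $s=t_i$ the partner coincides with the base point since the endpoints are inflexions (so the chord degenerates). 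Along this arc the normal vector field to $\C$ at a regular point equals $\mathbbm{n}_{\M}(f(s))$ by Definition \ref{DefNormalVectorEq}; at a cusp this unit normal field reverses side, exactly as in Fig. \ref{PictureNormalVectorToCusp}.

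The key step is to relate the parity of the number of cusps of $\C$ to whether $\mathbbm{n}_{\M}(f(t_1))$ and $\mathbbm{n}_{\M}(f(t_2))$ point to the same side or opposite sides. As in the proof of Proposition \ref{PropAlgParityOfCuspsInBranch}, the total turning of the continuous normal field along $\C$ (which exists as a continuous field through cusps, flipping regular component at each cusp) between the two endpoints is a half-integer; reducing modulo the contribution of the cusps, the number of cusps of $\C$ is odd precisely when the normal field at $f(t_2)$ is the negative of the one at $f(t_1)$ after parallel transport along $\C$, i.e. $\mathbbm{n}_{\M}(f(t_2))=-\mathbbm{n}_{\M}(f(t_1))$ as unit vectors (both tangent lines being parallel to the common tangent direction at the inflexion chord, so the two normals are either equal or opposite). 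So the problem reduces to: show that $\mathbbm{n}_{\M}(f(t_1))=-\mathbbm{n}_{\M}(f(t_2))$ if and only if exactly one of $f(t_1),f(t_2)$ is a singular point of the closed curve $\C\cup f([t_1,t_2])$.

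For that last equivalence I would argue locally at each inflexion endpoint. Near an ordinary inflexion $f(t_i)$, both $\M$ and $\C$ are tangent to the same line, $\C$ has an endpoint there (Remark \ref{RemInflPoint}), and the concatenation $\C\cup f([t_1,t_2])$ is $C^1$ (hence regular, i.e. $f(t_i)$ is \emph{not} a singular point of the union) precisely when $\C$ leaves $f(t_i)$ on the \emph{opposite} side of the tangent line from the incoming arc of $\M$ — equivalently when a consistent orientation/normal can be chosen — whereas it is a singular point (a ``corner'' where the two arcs meet from the same side, like a cusp-type meeting) in the other case. A short computation with the second- and third-order Taylor data of $f$ at an inflexion (using the order-of-contact formula in the Remark following the definition of ordinary inflexion, exactly as Berry's and the Wigner-caustic-on-shell local analysis in \cite{B1, DMR1, GH1}) shows the midpoint branch approaches $f(t_i)$ from a definite side determined by the sign of $\det[f'''(t_i),f'(t_i)]$ relative to the curvature on the partner arc; combining the two endpoints, the ``number of sign changes'' that makes $\C\cup f([t_1,t_2])$ fail to close up $C^1$ at exactly one endpoint is exactly the condition $\mathbbm{n}_{\M}(f(t_1))=-\mathbbm{n}_{\M}(f(t_2))$. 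The main obstacle is making this local side-of-the-tangent-line analysis at an ordinary inflexion fully rigorous and correctly bookkeeping the orientation conventions (which regular component of a cusp carries the outward normal, and how the normal $\mathbbm{n}_\M$ along $f([t_1,t_2])$ matches the normal of $\C$ at the shared endpoints) so that the global half-integer rotation-number count and the two local computations fit together with consistent signs; everything else is a routine adaptation of the arguments already used for Theorem \ref{PropCuspsEq} and Proposition \ref{PropAlgParityOfCuspsInBranch}.
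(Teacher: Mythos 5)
Your proposal is correct and follows essentially the same route as the paper: the paper also closes the branch up into the curve $N=\C\cup f\big([t_1,t_2]\big)$, equips $N$ with a continuous unit normal field, and observes that the cusps of $\C$ together with the singular endpoints of type (\ref{halfparabola}) are exactly the points where that field switches sides, so their total number on the closed curve is even. The only packaging difference is that the paper counts all side-switches of $N$ at once — quoting Corollary 4.8 of \cite{DMR1} for the two possible germs of $N$ at an inflexion endpoint and deferring your Taylor computation at the inflexion to Proposition \ref{PropLimitRatioCurv} — rather than first proving an endpoint-normal-mismatch criterion for the open arc $\C$ alone (which, as you note, requires careful left/right bookkeeping to be well-posed) and then matching it against the two germ types.
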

\begin{proof}
By genericity of $\M$ the points $f(t_1)$ and $f(t_2)$ are ordinary inflexion points of $\M$.

By Corollary 4.8. in \cite{DMR1} we know that the germ of the Wigner caustic at an inflexion point of a generic curve $\M$ together with $\M$ are locally diffeomorphic to the following germ at $(0,0)$:
\begin{align*}
\displaystyle\left\{(p,q)\in\mathbb{R}^2: p=0\right\}\cup\left\{(p,q)\in\mathbb{R}^2: p=-q^2, q\leqslant 0\right\}.
\end{align*}

Let $N=\C\cup f\big([t_1,t_2]\big)$. Then $N$ is a closed curve. The germ of $N$ at $f(t_i)$ for $i=1,2$ is locally diffeomorphic to one of the following germs at $(0,0)$:
\begin{align}\label{halfparabola}
\displaystyle\left\{(p,q)\in\mathbb{R}^2: p=0, q\leqslant 0\right\}\cup\left\{(p,q)\in\mathbb{R}^2: p=-q^2, q\leqslant 0\right\},
\end{align}
\begin{align}\label{halfparabola2}
\displaystyle\left\{(p,q)\in\mathbb{R}^2: p=0, q>0\right\}\cup\left\{(p,q)\in\mathbb{R}^2: p=-q^2, q\leqslant 0\right\}.
\end{align}

In other points $N$ has at most cusp singularities.

Note that the point $(0,0)$ is a singular point of the germ of type (\ref{halfparabola}) and the point $(0,0)$ is a $C^1$-regular point of the germ of type (\ref{halfparabola2}) (see Fig. \ref{HalfParabolaVF}).

Let $\M\ni p\mapsto\mathbbm{n}_{\M}(p)\in S^2$ be a continuous normal vector field to $\M$. Let us assume that the maximal glueing scheme for $C$ has the following form
\begin{align*}
\begin{array}{ccccccccc}
\p_{k_1}&\frown&\p_{k_2}&\frown&\ldots&\frown&\p_{k_{n-1}}&\frown&\p_{k_n}\\ \hline
\p_{l_1}&\frown&\p_{l_2}&\frown&\ldots&\frown&\p_{l_{n-1}}&\frown&\p_{l_n} \\ \hline \end{array},
\end{align*}
where $k_1=l_1, k_n=l_n$. Without loss of generality we can assume that $k_1<k_n$. Let us define a normal vector field $\mathbbm{n}_{N}$ to $N$ as follows:
\begin{itemize}
\item $\mathbbm{n}_{N}(p)=\mathbbm{n}_{\M}(p)$ for $p\in f\big([t_1,t_2]\big)$,
\item $\mathbbm{n}_{N}(p)=\mathbbm{n}_{\M}(a)$ for $p\in C$, where $p=\frac{a+b}{2}$, $a,b$ is a parallel pair of $\M$ such that there exists $i\in\{1, 2, \ldots, n-1\}$ such that 
\begin{align*}
a\in\overarc{\p_{k_i}}{\p_{k_{i+1}}}, b\in\overarc{\p_{l_i}}{\p_{l_{i+1}}}.
\end{align*}
\end{itemize}

\begin{figure}[h]
\centering
\includegraphics[scale=0.22]{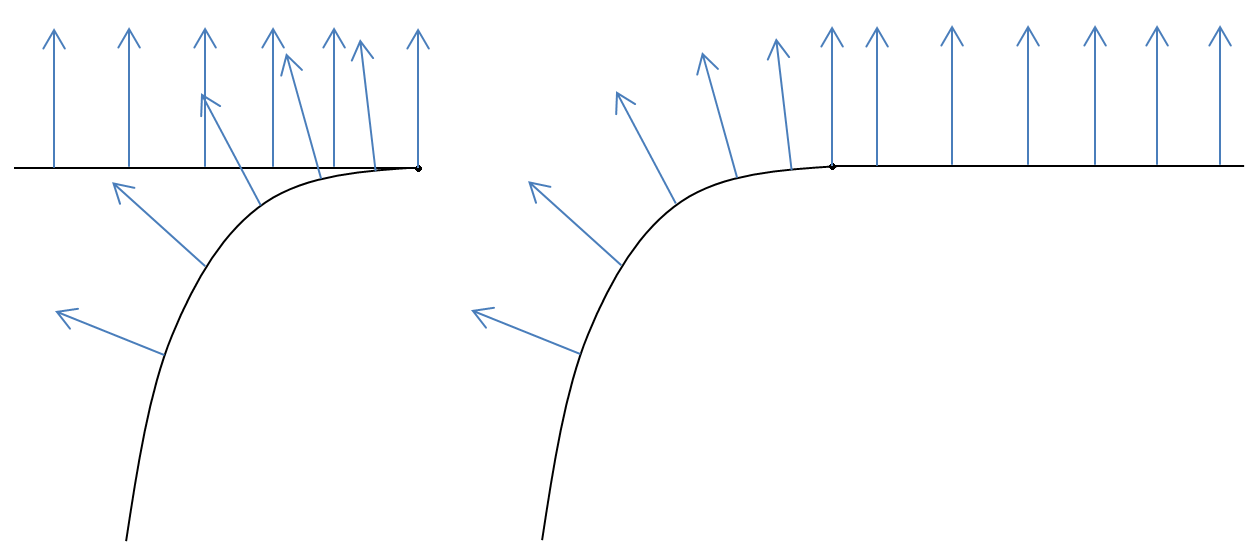}
\caption{A continous normal vector field to the germs of type (\ref{halfparabola}) and (\ref{halfparabola2})}
\label{HalfParabolaVF}
\end{figure}

The vector field $\mathbbm{n}_{N}$ is a continuous unit normal field to $N$. The normal vector field around the points of type (\ref{halfparabola}) and (\ref{halfparabola2}) is described in Fig. \ref{HalfParabolaVF}. Thus by the same argument as in the proof of Lemma \ref{PropCuspsEq} we can get that the total number of cusps and singularities of type (\ref{halfparabola}) in $N$ is even, so the number of cusps of $C$ is odd if and only if exactly one of the inflexion points $f(t_1), f(t_2)$ is of type (\ref{halfparabola}).

\end{proof}

In Fig. \ref{PictureAlgExam}(iii) there is exactly one point of type (\ref{halfparabola}), in Fig. \ref{PictureAlgExEasy}(i) there is an even number of points of type (\ref{halfparabola}).

\begin{prop}\label{PropLimitRatioCurv}
Let $\M$ be a regular curve. Let $(a,b)\ni s\mapsto f(s)\in\mathbb{R}^2$ be a parameterization of $\M$ and let $f(s_0)$ be an ordinary inflexion point of $\M$. Let $t$ be a smooth function-germ on $\mathbb{R}$ at $s_0$ such that $f(s), f(t(s))$ is a parallel pair and $\displaystyle\lim_{s\to s_0}t(s)=s_0$. Let $\kappa_{\M}(s)$ be the curvature of $\M$ at a point $f(s)$. Then
\begin{align}\label{LimitRatioCurv}
\lim_{s\to s_0}\frac{\kappa_{\M}(s)}{\kappa_{\M}(t(s))}=-1.
\end{align}

Furthermore let $C$ be a branch of the Wigner caustic which ends in $f(s_0)$. If \begin{align}\label{f4thderivative}
\det\left[\frac{\mathrm{d}^{4}f}{\mathrm{d}s^{4}}(s_0), \frac{\mathrm{d}f}{\mathrm{d}s}(s_0)\right]\neq 0,
\end{align}
then $\C\cup f\big([s_0,b)\big)$ at $f(s_0)$ is of type (\ref{halfparabola}) if
\begin{align}\label{CondLimitOfDiffOfRatioCurv}
\lim_{s\to s_0}\frac{\mathrm{d}}{\mathrm{d}s}\left(\frac{\kappa_{\M}(s)}{\kappa_{\M}(t(s))}\right)>0
\end{align}
and $\C\cup f\big([s_0,b)\big)$ at $f(s_0)$ is of type (\ref{halfparabola2}) if
\begin{align}\label{CondLimitOfDiffOfRatioCurv}
\lim_{s\to s_0}\frac{\mathrm{d}}{\mathrm{d}s}\left(\frac{\kappa_{\M}(s)}{\kappa_{\M}(t(s))}\right)<0.
\end{align}

\end{prop}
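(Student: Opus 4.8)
The plan is to work in arc-length coordinates near the ordinary inflexion point $f(s_0)$ and to extract the first two terms of the Taylor expansion of the ratio $\kappa_{\M}(f(s))/\kappa_{\M}(f(t(s)))$, then to translate the sign of the linear coefficient into the geometric type (\ref{halfparabola}) versus (\ref{halfparabola2}). First I would set up normalized coordinates so that $f(s_0)=(0,0)$, $\frac{df}{ds}(s_0)=(1,0)$, and the $x$-axis is the tangent line at the inflexion; genericity (the ordinary inflexion condition $\det[\frac{d^2f}{ds^2}(s_0),\frac{df}{ds}(s_0)]=0$ but $\det[\frac{d^3f}{ds^3}(s_0),\frac{df}{ds}(s_0)]\neq 0$) together with (\ref{f4thderivative}) means the curve looks like $f(s)\approx(s, c_3(s-s_0)^3+c_4(s-s_0)^4+\cdots)$ with $c_3\neq 0$ and a genuinely present $c_4$ term. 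Here $\kappa_{\M}(f(s))$ vanishes simply at $s_0$, so $\kappa_{\M}(f(s))=6c_3(s-s_0)+O((s-s_0)^2)$.

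Next I would analyze the parallel-pair function $t$. The condition $f'(s)=\pm f'(t(s))$ (the tangent directions agree up to sign) combined with $t(s_0)=s_0$ forces, by an implicit-function-theorem argument applied to the slope $y'(s)$ of the curve (which near the inflexion is a function with a nondegenerate critical-type behavior determined by $c_3,c_4$), an expansion $t(s)=s_0-(s-s_0)+\mu(s-s_0)^2+O((s-s_0)^3)$ for a constant $\mu$ depending on $c_3,c_4$ (explicitly $\mu$ is proportional to $c_4/c_3$). Substituting into $\kappa_{\M}(f(t(s)))=6c_3(t(s)-s_0)+O((t(s)-s_0)^2)=-6c_3(s-s_0)+6c_3\mu(s-s_0)^2+\cdots$ and dividing gives
\begin{align*}
\frac{\kappa_{\M}(f(s))}{\kappa_{\M}(f(t(s)))}=-1+\nu(s-s_0)+O((s-s_0)^2),
\end{align*}
which immediately yields (\ref{LimitRatioCurv}); the constant $\nu$ is a nonzero multiple of $c_4/c_3$ under hypothesis (\ref{f4thderivative}), and $\displaystyle\lim_{s\to s_0}\frac{d}{ds}\big(\kappa_{\M}(f(s))/\kappa_{\M}(f(t(s)))\big)=\nu$.

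Finally I would connect the sign of $\nu$ to the germ type. The Wigner caustic branch is parametrized by $s\mapsto\frac12\big(f(s)+f(t(s))\big)$; using the expansions above, its leading behavior is a half-parabola $p=-q^2$, $q\le 0$ (this is the content of Corollary 4.8 of \cite{DMR1}, already quoted), and the curve $\C\cup f([s_0,b))$ glues this half-parabola to the half tangent-line branch $\{p=0\}$ coming from $f([s_0,b))$. Whether this glued germ is of type (\ref{halfparabola}) (the tangent branch sits on the side $q\le 0$, making $(0,0)$ singular) or of type (\ref{halfparabola2}) (the tangent branch sits on the side $q>0$, making $(0,0)$ a $C^1$ point) is decided by the direction in which $s$ increases through $s_0$ relative to the side on which the Wigner caustic develops — and that direction is exactly recorded by the sign of $\nu$, since $\nu$ controls whether, for $s$ slightly larger than $s_0$, the midpoint moves to the same side as $f([s_0,b))$ or the opposite side. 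Matching the orientation conventions carefully shows $\nu>0$ gives type (\ref{halfparabola}) and $\nu<0$ gives type (\ref{halfparabola2}).

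The main obstacle I anticipate is the bookkeeping in this last step: keeping track of which half of each local branch (the $q\le 0$ half of the parabola, and the correct half of the tangent line) actually occurs, and verifying that the diffeomorphism from Corollary 4.8 of \cite{DMR1} is compatible with the orientation in which $\C\cup f([s_0,b))$ is traversed, so that the sign of $\nu$ is matched to the right germ. The Taylor computations for $t(s)$ and for the ratio of curvatures are routine once the normalization is fixed; the delicate part is the geometric identification of the germ type with the sign of the derivative limit.
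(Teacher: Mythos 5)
Your proposal is correct and follows essentially the same route as the paper: local graph coordinates $f(s)=(s,F(s))$ with $F$ beginning at a nonzero cubic term, the implicit function theorem to produce $t(s)$ with $t'(s_0)=-1$, and a sign analysis of the midpoint curve's velocity relative to $f'$ to distinguish types (\ref{halfparabola}) and (\ref{halfparabola2}). The orientation bookkeeping you flag as the delicate step is resolved in the paper by the exact identity $t'(s)=\kappa_{\M}(f(s))/\kappa_{\M}(f(t(s)))$ (valid because $F'(s)=F'(t(s))$), so the derivative limit in question is just $t''(s_0)$ and the germ-type criterion reduces to the sign of $1+t'(s)$ through the parameterization $x_{\frac{1}{2}}'(s)=\frac{1}{2}\left(1+t'(s)\right)\left(1,F'(s)\right)$.
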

\begin{proof}
Without loss of generality we may assume that locally
\begin{align}\label{fTaylor}
f(s)=(s,F(s)),
\end{align}
where $\displaystyle F(s)=as^3+G(s)$, $a\neq 0$ and $s_0=0$, where $G(s)\in m_1^4$, where $m_n$ is the maximal ideal of smooth function-germs $\mathbb{R}^n\to\mathbb{R}$ vanishing at $0$. Let us notice that $(s, F(s)), (t, F(t))$ is a parallel pair of $\M$ nearby $f(0)$ if and only if $s\neq t$ and
\begin{align*}
F'(s)-F'(t) &=0.
\end{align*}
This is equivalent to
\begin{align*}
(s-t)(3as+3at+H(s,t)) &=0,
\end{align*}
where $H\in m_2^2$ and let $P(s,t)=3as+3at+H(s,t)$. Let $t:(\mathbb{R},0)\to(\mathbb{R},0)$ be a function-germ at $0$ such that 
\begin{align}\label{Pdefinition}
P(s,t(s))=0.
\end{align}
By the implicit function theorem the function-germ $t$ is well defined, because $\displaystyle\frac{\partial P}{\partial t}(0,0)=3a\neq 0$.
By (\ref{Pdefinition}) we get that
\begin{align}
\label{tprimebyP}t'(s) &=-\frac{\displaystyle\frac{\partial P}{\partial s}(s,t)}{\displaystyle\frac{\partial P}{\partial t}(s,t)}.
\end{align}
It implies that
\begin{align}
\label{tprimeofZero}t'(0) &=-1.
\end{align}
Since $F'(s)=F'(t(s))$, then for $s\neq 0$ 
\begin{align}\label{tprimebyKappa}
t'(s) &=\frac{F''(s)}{F''(t(s))}=\frac{\kappa_{\M}(s)}{\kappa_{\M}(t(s))}.
\end{align}
Thus (\ref{LimitRatioCurv}) holds.

\begin{figure}[h]
\centering
\includegraphics[scale=0.25]{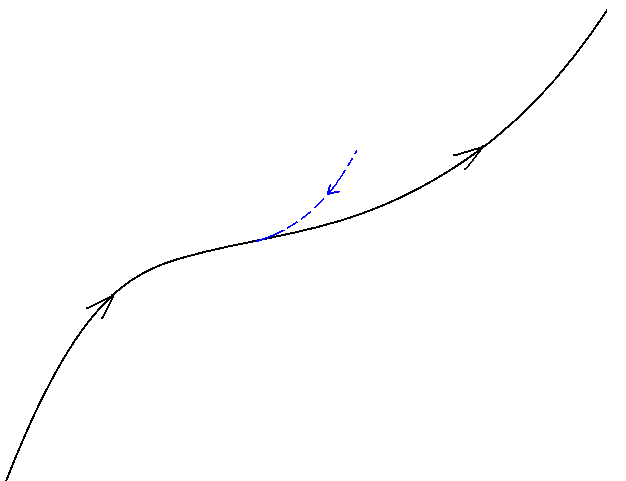}
\caption{A curve $\M$ with an inflexion point and the Wigner caustic of $\M$ (the dashed line)}
\label{InflPointLocal}
\end{figure}

The condition (\ref{f4thderivative}) means that $F^{(4)}(0)\neq 0$. It implies that $\M$ is not locally central symmetric around $f(s_0)=(0,0)$.

The branch of the Wigner caustic which contains $f(0)$ has the following parameterization
\begin{align}
x_{\frac{1}{2}}(s) & = \frac{1}{2}\big(s+t(s), F(s)+F(t(s))\big).
\end{align}
Therefore
\begin{align}\label{wcprime}
x'_{\frac{1}{2}}(s) & = \frac{1}{2}(1+t'(s))\big(1,F'(s)\big).
\end{align}

Since $\C\cup f\big([t_1, t_2]\big)$ at $f(t_1)$ can be only of type (\ref{halfparabola}) or (\ref{halfparabola2}), then $\C\cup f\big([t_1,t_2]\big)$ at $f(t_1)$ is of type (\ref{halfparabola}) if and only if $x_{\frac{1}{2}}'(s)f'(s)<0$ whenever $s\to t_1^-$, therefore by (\ref{wcprime}) we get that $1+t'(s)<0$. By (\ref{tprimeofZero}) we get that $t''(s)>0$ and by (\ref{tprimebyKappa}) we finish the proof.

\end{proof}

\begin{rem}
Under the assumptions of Theorem \ref{PropLimitRatioCurv} if locally $f(s)=(s,F(s))$ then
\begin{align}
\lim_{s\to s_0}\frac{\mathrm{d}}{\mathrm{d}s}\left(\frac{\kappa_{\M}(s)}{\kappa_{\M}(t(s))}\right)=
-\frac{2F^{(4)}(s_0)}{3F^{(3)}(s_0)}.
\end{align}
\end{rem}

\begin{thm}
Let $\M$ be a generic regular closed curve. Let $S^1\ni s\mapsto f(s)\in\mathbb{R}^2$ be a parameterization of $\M$, let $f(s_1)$, $f(s_2)$ be inflexion points of $\M$ and let $\C$ be a branch of the Wigner caustic of $\M$ which connects $f(s_1)$ and $f(s_2)$. Then the number of cusps of $\C$ is odd if and only if 
\begin{align}
\lim_{s\to s_1^\pm}\frac{\mathrm{d}}{\mathrm{d}s}\left(\frac{\kappa_{\M}(s)}{\kappa_{\M}(t_1(s))}\right)\cdot \lim_{s\to s_2^\mp}\frac{\mathrm{d}}{\mathrm{d}s}\left(\frac{\kappa_{\M}(s)}{\kappa_{\M}(t_2(s))}\right)>0,
\end{align}
where $\kappa_{\M}(s)$ denotes the curvature of $\M$ at $f(s)$, the pairs $f(s), f(t_1(s))$ and $f(s), f(t_2(s))$ are parallel pairs such that $t_i(s)\to s_i$ whenever $s\to s_i$ and $s<t_i(s)$ for the left-hand side neighborhood of $s_i$ for $i=1,2$.
\end{thm}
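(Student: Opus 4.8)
The plan is to combine the two results just established: Theorem~\ref{ThmCuspsWCBetweenInfl}, which says the number of cusps of $\C$ is odd if and only if exactly one of the two endpoints $f(s_1)$, $f(s_2)$ is a singular point (equivalently, of type~(\ref{halfparabola})) of the closed curve $\C\cup f\big([s_1,s_2]\big)$, and Proposition~\ref{PropLimitRatioCurv}, which translates the local type at an ordinary inflexion point into the sign of $\displaystyle\lim_{s\to s_i}\frac{d}{ds}\left(\frac{\kappa_{\M}(f(s))}{\kappa_{\M}(f(t_i(s)))}\right)$. So the theorem should follow by a short logical bookkeeping argument once the two local pictures are matched up correctly.

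First I would fix notation: let $f(s_1)$, $f(s_2)$ be the inflexion points joined by $\C$, and by genericity they are ordinary inflexion points with $F^{(4)}\neq 0$ in the adapted coordinates, so that the hypothesis (\ref{f4thderivative}) of Proposition~\ref{PropLimitRatioCurv} holds at each endpoint. One must be slightly careful about which half-arc of $\M$ at $f(s_i)$ is used: the closed curve $N=\C\cup f\big([s_1,s_2]\big)$ uses the piece of $\M$ running from $f(s_1)$ to $f(s_2)$, so at $f(s_1)$ the relevant germ is $\C\cup f\big([s_1,s_1+\eps)\big)$ and at $f(s_2)$ it is $\C\cup f\big((s_2-\eps,s_2]\big)$; in the latter case Proposition~\ref{PropLimitRatioCurv} is applied with the orientation reversed, which flips neither the conclusion about $C^1$-regularity vs.\ singularity nor — crucially — the sign of the limit, since $\dfrac{\kappa_{\M}(f(s))}{\kappa_{\M}(f(t(s)))}$ and its derivative are intrinsic to the parallel-pair correspondence and independent of the direction of traversal. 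I would spell out this orientation check explicitly, as it is the one place a sign error can creep in.

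Then the argument is: by Proposition~\ref{PropLimitRatioCurv}, the endpoint $f(s_i)$ is a singular point of $N$ (i.e.\ of type~(\ref{halfparabola})) precisely when $\displaystyle\lim_{s\to s_i}\frac{d}{ds}\left(\frac{\kappa_{\M}(f(s))}{\kappa_{\M}(f(t_i(s)))}\right)>0$, and it is a $C^1$-regular point of $N$ (type~(\ref{halfparabola2})) precisely when this limit is $<0$ (the limit is nonzero by (\ref{f4thderivative}), using the formula in the Remark relating it to $-2F^{(4)}/(3F^{(3)})$). Hence ``exactly one of $f(s_1),f(s_2)$ is a singular point of $N$'' is equivalent to ``exactly one of the two limits is positive,'' which is equivalent to the two limits having opposite signs, i.e.\ to their product being negative. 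Negating, the number of cusps of $\C$ is odd if and only if the product of the two limits is positive, which is the claimed statement. By Theorem~\ref{ThmCuspsWCBetweenInfl} this is exactly the cusp-parity condition, so the proof concludes.

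I do not expect a genuine obstacle here — the theorem is essentially a corollary obtained by chaining Theorem~\ref{ThmCuspsWCBetweenInfl} and Proposition~\ref{PropLimitRatioCurv}. The only real care needed is the orientation/half-arc bookkeeping at $f(s_2)$ described above, and confirming that the limits $\displaystyle\lim_{s\to s_i}\frac{d}{ds}\left(\frac{\kappa_{\M}}{\kappa_{\M}\circ t_i}\right)$ are indeed nonzero for a generic curve so that the trichotomy of signs collapses to the clean ``$>0$ vs.\ $<0$'' dichotomy; this is where genericity (condition (\ref{f4thderivative}), equivalently $F^{(4)}(s_i)\neq 0$) is used.
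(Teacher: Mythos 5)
Your overall strategy is exactly the paper's: the published proof is literally the two sentences ``by genericity $f(s_1)$, $f(s_2)$ are ordinary inflexion points; the theorem is a consequence of Theorem~\ref{ThmCuspsWCBetweenInfl} and Proposition~\ref{PropLimitRatioCurv}.'' However, your execution of the sign bookkeeping --- the one place you yourself identify as delicate --- contains a genuine error, which you then mask with an unjustified logical step. The claim that reversing the direction of traversal ``flips neither the conclusion about $C^1$-regularity vs.\ singularity nor the sign of the limit'' is false for the sign of the limit. The ratio $\kappa_{\M}(f(s))/\kappa_{\M}(f(t(s)))$ is indeed orientation-invariant as a function of the point (both signed curvatures flip), but the operator $\frac{d}{ds}$ is odd under the reparameterization $s\mapsto -s$; equivalently, in the adapted coordinates of Proposition~\ref{PropLimitRatioCurv} the limit equals $t''(s_0)=-2F^{(4)}(s_0)/(3F^{(3)}(s_0))$, and $F^{(3)}$ changes sign when the graph is traversed backwards while $F^{(4)}$ does not. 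Consequently, at $f(s_2)$, where the closed curve $N=\C\cup f([s_1,s_2])$ uses the \emph{backward} half-arc $f((s_2-\eps,s_2])$, the criterion for type~(\ref{halfparabola}) in terms of the limit taken in the fixed global parameterization is $\lim_{s\to s_2}\frac{d}{ds}\bigl(\kappa_{\M}(f(s))/\kappa_{\M}(f(t_2(s)))\bigr)<0$, not $>0$.

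With the correct signs the bookkeeping closes without any negation: $f(s_1)$ is singular for $N$ iff $L_1>0$ and $f(s_2)$ is singular for $N$ iff $L_2<0$, so ``exactly one endpoint singular'' means either ($L_1>0$ and $L_2>0$) or ($L_1<0$ and $L_2<0$), i.e.\ $L_1L_2>0$, which is the stated criterion. As written, your chain of equivalences actually yields ``odd number of cusps iff $L_1L_2<0$,'' and the final sentence ``Negating, the number of cusps of $\C$ is odd if and only if the product is positive'' is a non sequitur --- there is nothing to negate, since Theorem~\ref{ThmCuspsWCBetweenInfl} already equates oddness of the cusp count with ``exactly one singular endpoint.'' You should replace the orientation-invariance claim by the sign-flip computation above and delete the negation step; the rest of the argument (genericity giving ordinary inflexions, condition~(\ref{f4thderivative}) guaranteeing the limits are nonzero, and the appeal to Theorem~\ref{ThmCuspsWCBetweenInfl}) is correct and matches the paper.
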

\begin{proof} By genericity of $\M$ we get that $f(s_1)$ and $f(s_2)$ are ordinary inflexion points. Then the theorem is a consequence of Theorem \ref{ThmCuspsWCBetweenInfl} and Proposition \ref{PropLimitRatioCurv}.

\end{proof}

Now we study inflexion points on the Wigner caustic on shell.

\begin{thm}\label{ThmEvenNumberOnShell}
Let $\M$ be a generic regular closed curve. Let $S^1\ni s\mapsto f(s)\in\mathbb{R}^2$ be a parameterization of $\M$ and let $\C$ be a branch of the Wigner caustic which connects two inflexion points $f(t_1)$ and $f(t_2)$ of $\M$. Then the number of inflexion points of $\C$ and the number of inflexion points of the arc $f\big((t_1,t_2)\big)$ are even.
\end{thm}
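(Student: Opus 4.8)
The plan is to use the angle function $\varphi_{\M}$ and Proposition \ref{PropInflOfEq} to reduce the count of inflexion points of $\C$ and of the arc $f\big((t_1,t_2)\big)$ to a parity statement about the number of inflexion points of $\M$ lying on the two arcs of $\M$ cut out by $f(t_1)$ and $f(t_2)$. By genericity, $f(t_1)$ and $f(t_2)$ are ordinary inflexion points; by Proposition \ref{PropAlgPartBetweenIflPt}(i), the branch $\C$ is precisely the image of a maximal glueing scheme of the form $\begin{array}{ccccc}\p_{k}&\frown&\ldots&\frown&\p_{l}\\ \hline \p_{l}&\frown&\ldots&\frown&\p_{k}\\ \hline\end{array}$, where $\p_k=f(t_1)$ and $\p_l=f(t_2)$ correspond to the only two inflexion points appearing in that scheme (by Lemma \ref{LemPropMaxGlueSchemes}(iii)--(iv)). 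So $\C$ together with the parametrization $a\mapsto\frac{a+P(a)}{2}$ covers $\C$ by an arc of $\M$ running from $f(t_1)$ to $f(t_2)$; denote its image $f\big([t_1,t_2]\big)$ (relabelling so this is the arc of $\M$ swept by the ``top row'' of the scheme), and let $f\big([t_2,t_1]\big)$ be the complementary arc.

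First I would show the number of inflexion points of the arc $f\big((t_1,t_2)\big)$ is even. By Proposition \ref{PropAlgAngleFun}(iii) inflexion points of $\M$ correspond to local extrema of $\varphi_{\M}$. Since $f(t_1)$ and $f(t_2)$ form a parallel pair, $\varphi_{\M}(t_1)=\varphi_{\M}(t_2)$ by Proposition \ref{PropAlgAngleFun}(i), and both are local extrema of $\varphi_{\M}$. The key point is that the two endpoints $t_1,t_2$ of the branch $\C$ are parametrized \emph{in opposite directions} along $\M$ (this is how the Wigner caustic midpoint construction glues the two ends of the scheme), which forces $\varphi_{\M}(t_1)$ and $\varphi_{\M}(t_2)$ to be extrema of the \emph{same type} — either both maxima or both minima of $\varphi_{\M}$. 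Then I invoke Proposition \ref{PropAlgAngleFun}(iv): between two consecutive extrema the types alternate, so the number of extrema of $\varphi_{\M}$ strictly between $t_1$ and $t_2$ (along the arc $(t_1,t_2)$) must be even, since the two endpoint extrema have the same type. That is exactly the statement that $f\big((t_1,t_2)\big)$ contains an even number of inflexion points of $\M$.

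Next, for the number of inflexion points of $\C$ itself, I would apply Proposition \ref{PropInflOfEq}: a point $\lambda a+(1-\lambda)b=\frac{a+b}{2}$ of $\C$ is an inflexion point of $\C$ exactly when one of $a,b$ is an inflexion point of $\M$. Walking along the glueing scheme of $\C$ from the endpoint pair $\begin{array}{c}\p_k\\ \hline \p_l\\ \hline\end{array}$ to itself, each interior inflexion point of $\C$ corresponds to a parallel pair $(a,b)$ in the scheme in which exactly one of $a,b$ is an inflexion point of $\M$ (the endpoints $f(t_1),f(t_2)$ being excluded since they are the endpoints of the branch, per Corollary \ref{CorAlgNumberOfInflInEachComponent}). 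Such a parallel pair arises each time the top row $f\big([t_1,t_2]\big)$ passes through an inflexion point of $\M$ and each time the bottom row passes through one; but the bottom row sweeps the \emph{same} arc $f\big([t_1,t_2]\big)$ as the top (traversed oppositely), by the symmetry of the scheme in Lemma \ref{LemPropMaxGlueSchemes}(iv). Hence the inflexion points of $\C$ come in pairs — one contributed when the ``$a$-coordinate'' hits a given inflexion point of $\M$ on $f\big((t_1,t_2)\big)$ and one when the ``$b$-coordinate'' does — so their number equals $2$ times the number of inflexion points of $\M$ on $f\big((t_1,t_2)\big)$, in particular even. (Alternatively, since we have just shown the latter number is even, and the number of inflexion points of $\C$ equals the number of inflexion points of $\M$ strictly between $t_1$ and $t_2$ counted with the scheme's multiplicity, evenness of $\C$'s inflexion count also follows directly from Corollary \ref{CorAlgNumberOfInflInEachComponent} together with the first part.)

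The main obstacle is the claim that $\varphi_{\M}(t_1)$ and $\varphi_{\M}(t_2)$ are local extrema of $\varphi_{\M}$ of the \emph{same} type: this is where the specific structure of the Wigner caustic on shell — that its endpoints are a parallel pair with \emph{opposite} tangent orientations — is essential, and it must be extracted carefully from the form of the maximal glueing scheme in Lemma \ref{LemPropMaxGlueSchemes}(iv) (the scheme begins $\begin{array}{c}\p_k\\ \hline \p_k\\ \hline\end{array}$ and ends $\begin{array}{c}\p_l\\ \hline \p_l\\ \hline\end{array}$, i.e. at the diagonal, which pins down the relative orientation). Once that orientation bookkeeping is in place, everything else is a direct application of Proposition \ref{PropAlgAngleFun}, Proposition \ref{PropInflOfEq} and Corollary \ref{CorAlgNumberOfInflInEachComponent}.
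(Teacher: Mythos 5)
Your overall strategy (angle function, alternation of extrema, Proposition \ref{PropInflOfEq}) is the same as the paper's, but the two central claims you rely on are both false, and the crux of the argument is missing.

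First, the endpoint extrema are of \emph{opposite} type, not the same type. By Proposition \ref{PropAlgAngleFun}(iv) consecutive local extrema of $\varphi_{\M}$ alternate between maxima and minima, so if $\varphi_{\M}(t_1)$ and $\varphi_{\M}(t_2)$ had the same type, the number of extrema strictly between them would be \emph{odd} --- your parity deduction is backwards. The correct statement, which the paper proves, is that $\varphi_{\M}(t_1)$ is a local minimum if and only if $\varphi_{\M}(t_2)$ is a local maximum, and it is precisely this opposite-type property that forces an even interior count. Establishing it is the hard part of the proof: the paper lifts the two rows of the glueing scheme to paths $\psi_1,\psi_2$ on the graph of $\varphi_{\M}$, tracks the relative orientation they induce, and uses the fact that this orientation flips exactly when one of the paths crosses a local extremum; since the orientations are opposite at both ends, the type of $\varphi_{\M}(t_2)$ is pinned down. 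Your appeal to ``opposite tangent orientations at the endpoints'' does not substitute for this bookkeeping, and as stated it leads to the wrong intermediate claim.

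Second, the two rows of the maximal glueing scheme do \emph{not} sweep the same arc of $\M$: in Lemma \ref{LemPropMaxGlueSchemes}(iv) the interior entries satisfy $p_{k_i}\neq p_{l_i}$, and the rows generally cover overlapping, back-tracking, and different portions of $\M$. Consequently your formula ``number of inflexion points of $\C$ equals twice the number of inflexion points of $\M$ on $f\big((t_1,t_2)\big)$'' is false; the paper's own example (the scheme (\ref{GlueingSchemeEx4}) in Subsection \ref{SectionExample}) produces a branch with exactly two inflexion points while the arc of $\M$ joining its endpoints contains none. The correct count is the number of interior entries of the scheme that are inflexion points of $\M$ (Corollary \ref{CorAlgNumberOfInflInEachComponent}), equivalently the total number of orientation changes of $\psi_1$ and $\psi_2$, and its evenness again falls out of the same orientation argument (the orientations start and end opposite, so the number of flips is even), not from a pairing of the two rows.
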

\begin{proof}
Let $\varphi_{\M}:S^1\to S^1$ be the angle function of $\M$. By genericity of $\M$ all local extrema of $\varphi_{\M}$ are different. Let 
\begin{align*}
\psi_1,\psi_2:[0,T]\to\mbox{graph}\big(\varphi_{\M}\big)\subset S^1\times S^1
\end{align*} 
be the following continuous functions:
\begin{align*}
\psi_1(0) &=\psi_2(0)=\big(t_1,\varphi_{\M}(t_1)\big), \psi_1(T)=\psi_2(T)=\big(t_2, \varphi_{\M}(t_2)\big),\\
\psi_i(t) &=\big(s_i(t), \varphi_{\M}(s_i(t))\big)\ \text{ for }i=1,2,
\end{align*}
where continuous functions $s_1, s_2:[0,T]\to S_1$ satisfy $\varphi_{\M}\big(s_1(t)\big)=\varphi_{\M}\big(s_2(t)\big)$ and $s_1(t)\neq s_2(t)$ for $t\in(0,T)$.

Since $f(t_1)$ is an inflexion point then $\varphi_{\M}(t_1)$ is a local extremum. Without loss of generality we assume that $\varphi_{\M}(t_1)$ is a local minimum. To prove that the number of inflexion points in $f\big((t_1,t_2)\big)$ is even it is enough to show that $\varphi_{\M}(t_2)$ is a local maximum. 

The numbers of local maxima and local minima of $\varphi_{\M}$ are equal. Thus the difference between the number of local maxima and local minima of $\varphi_{\M}\Big|_{S^1-\{t_1\}}$ is one. For small $\varepsilon>0$ the arcs $\psi_1\big|_{[0,\varepsilon]}$ and $\psi_2\big|_{[0,\varepsilon]}$ define the opposite orientations of the graph of $\varphi_{\M}$ and $\varphi_{\M}\circ s_i\big|_{[0,\varepsilon]}$ increases. Let $\varphi_{\M}\big(s_i(\tilde{t})\big)$ for $i=1$ or $i=2$ be a local extremum of $\varphi_{\M}$ such that there are no extremum on $\varphi_{\M}\big(s_1(0,\tilde{t})\big)$ and $\varphi_{\M}\big(s_2(0,\tilde{t})\big)$. Since $\varphi_{\M}(t_1)$ is a local minimum then $\varphi_{\M}(s_i(\tilde{t}))$ is a local maximum and $\psi_j(\tilde{t}-\varepsilon, \tilde{t}+\varepsilon)$ for $j\neq i$ changes the orientation in $\tilde{t}$ (see Fig. \ref{FigAngleFunChange}).

\begin{figure}[h]
\centering
\includegraphics[scale=0.25]{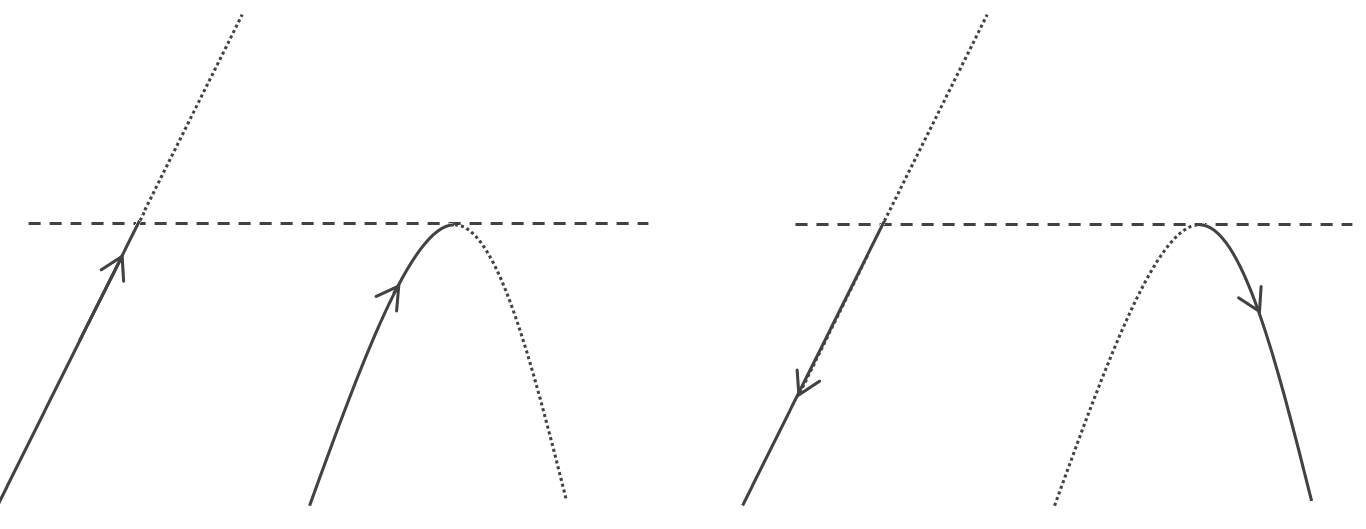}
\caption{A change of the angle function $\varphi_\M$}
\label{FigAngleFunChange}
\end{figure}

The numbers of local maxima and local minima of $\varphi_{\M}\Big|_{S^1-[t_1,\tilde{t}]}$ are equal but the arcs $\psi_1\Big|_{[\tilde{t},\tilde{t}+\varepsilon]}$ and $\psi_2\Big|_{[\tilde{t},\tilde{t}+\varepsilon]}$ define the same orientation of $\mbox{graph}\big(\varphi_{\M}\big)$. Since the function $\varphi_{\M}\Big|_{[\tilde{t},\tilde{t}+\varepsilon]}$ increases then the next extremum is a local minimum. The number of local minima decreases by $1$ and the orientations are opposite after crossing the minimum. Thus the defined orientations are opposite if and only if the difference between the number of local maxima and local minima to cross is one. Since for smal $\varepsilon>0$ the arcs $\psi_1\big|_{[T-\varepsilon,T]}$ and $\psi_2\big|_{[T-\varepsilon,T]}$ define the opposite orientations of the graph of $\varphi_{\M}$, then $\varphi_{\M}(t_2)$ must be a local maximum.

A point $\frac{1}{2}(a+b)$ is an inflexion point of $\C$ if and only if one of points of the parallel pair $a,b$ is an inflexion point of $\M$. 
The number of inflexion points of $\C$ is equal to the sum of the number of changes of the orientations of $\psi_1$ and $\psi_2$ because $\psi_i(\tilde{t}-\varepsilon, \tilde{t}+\varepsilon)$ chagnes the orientation in $\tilde{t}$ if and only if $\varphi_{\M}(s_i(\tilde{t}))$ is a local extremum. Since $\varphi_{\M}(t_1)$ is a minimum and $\varphi_{\M}(t_2)$ is a maximum, then the total number of changes of the orientations is even.

\end{proof}

\begin{figure}[h]
\centering
\includegraphics[scale=0.28]{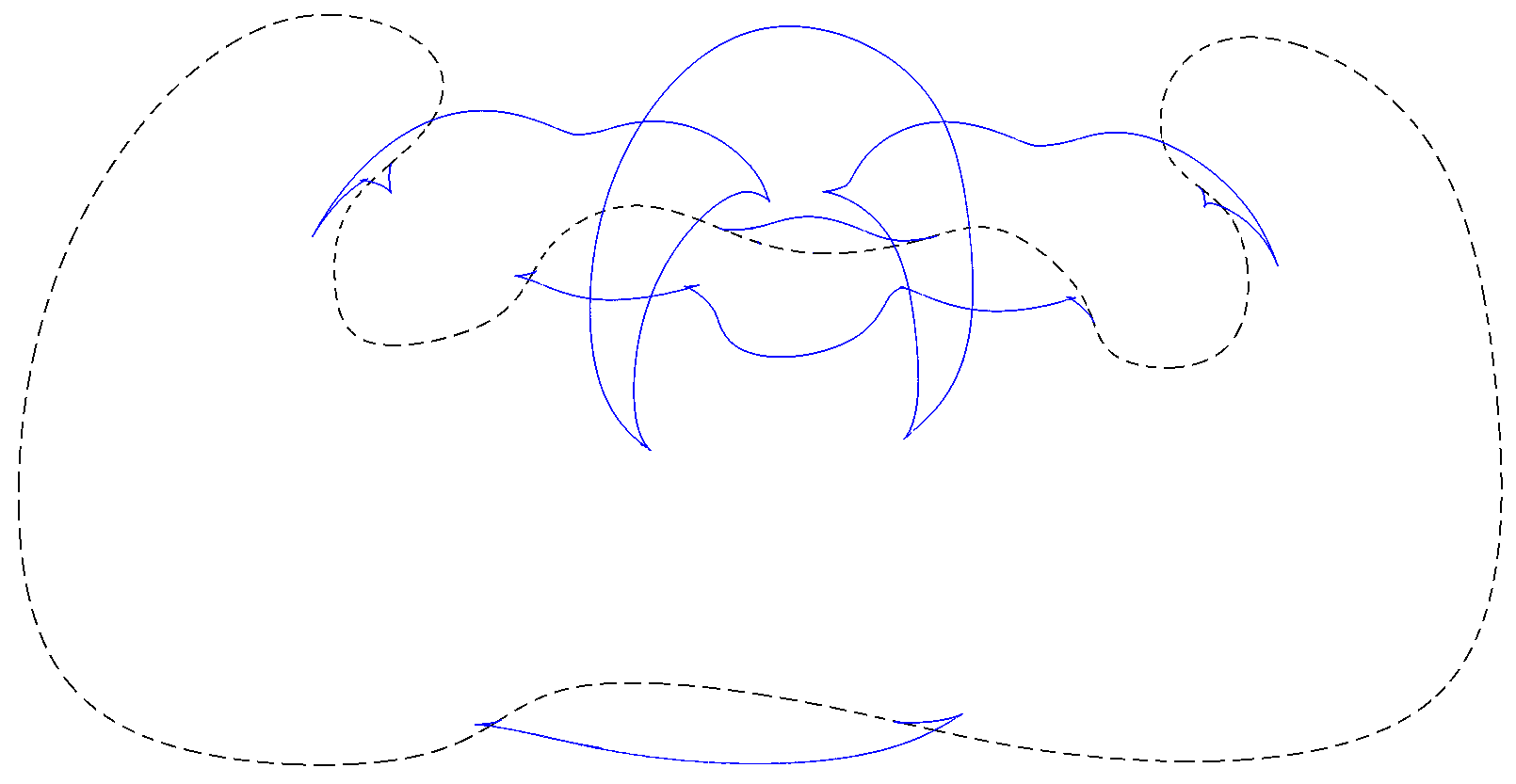}
\caption{A curve $\M$ with $8$ inflexion points (the dashed line) and branches of the Wigner caustic between inflexion points of $\M$}
\label{Fig_4_wc_on_shell}
\end{figure}

In Fig. \ref{Fig_4_wc_on_shell} we illustrate a closed curve $\M$ and branches of the Wigner caustic between inflexion points of $\M$. In Fig. \ref{FigZeroInflPoints} we illustrate a closed curve $\M$ such that the branch of the Wigner caustic which connects two inflexion points of $\M$ has no inflexion points.

\begin{figure}[h]
\centering
\includegraphics[scale=0.4]{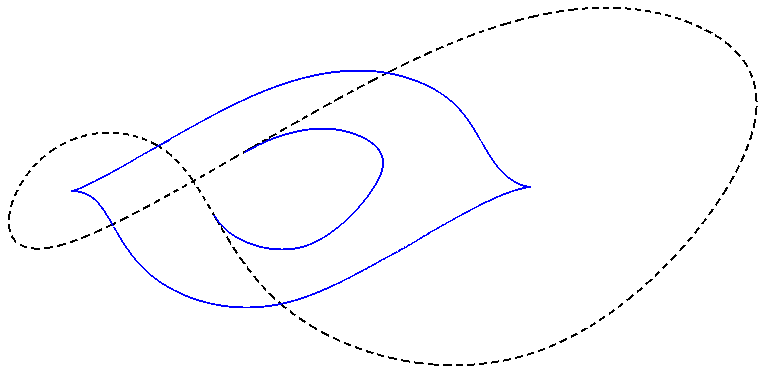}
\caption{A curve $\M$ (the dashed line) and $\Eq_{\frac{1}{2}}(\M)$}
\label{FigZeroInflPoints}
\end{figure}

\begin{lem}\label{LemEvenNumOfInflOfClosedWithCusp}
Let $\C$ be a smooth closed curve with at most cusp singularities. Then the number of inflexion points of $\C$ is even.
\end{lem}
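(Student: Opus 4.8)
The plan is to run the same parity argument used in Lemma \ref{LemmaAlgEvenNumOfInfl} and Proposition \ref{PropAlgAngleFun}(iii), with one extra ingredient that takes care of the cusps. Parametrize $\C$ by $S^1\ni s\mapsto f(s)\in\mathbb R^2$, smooth and regular away from the finitely many cusp points, and let $\theta\colon S^1\to\mathbb RP^1$ be the \emph{tangent line function}, which sends $s$ to the direction modulo $\pi$ of the tangent line of $\C$ at $f(s)$ (at a cusp this is the tangent line in the sense of the limit of tangent directions). On the complement of the cusps, $\theta$ is just the angle function of the regular part, so $\theta'$ equals the signed curvature (up to a positive factor) and the inflexion points of $\C$ are exactly the points where $\theta'$ changes sign, i.e.\ the strict local extrema of $\theta$. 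Thus the lemma will follow once we show that $\theta$ is a continuous, piecewise strictly monotone map $S^1\to\mathbb RP^1$ whose strict local extrema are precisely the inflexion points of $\C$.

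The key step is the local analysis at a cusp: I would show that $\theta$ extends to a smooth function across each cusp and that $\theta'$ does \emph{not} vanish there. Using the normal form $f(t)=(t^2,t^3)+\ldots$ of an ordinary cusp, the tangent direction of $f$ is spanned by $f'(t)=(2t,3t^2)+\ldots$, so in the affine chart of $\mathbb RP^1$ around the first coordinate axis one gets $\theta(t)=\tfrac{3t}{2}+O(t^2)$, which is smooth in $t$ with $\theta'(0)=\tfrac32\neq 0$. Hence a cusp is ``invisible'' to $\theta$: even though $f$ is singular and the signed curvature blows up there, the tangent line sweeps monotonically through the cusp. Consequently no cusp is an inflexion point, the signed curvature has the same sign on the two branches meeting at a cusp, and \emph{all} strict local extrema of $\theta$ occur at regular points and coincide with the inflexion points of $\C$. (Here I take a cusp singularity to be an ordinary cusp, in accordance with the convention used elsewhere in the paper.)

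It then remains to conclude exactly as in Lemma \ref{LemmaAlgEvenNumOfInfl}. The map $\theta\colon S^1\to\mathbb RP^1\cong S^1$ is continuous with finitely many critical points (the inflexion points), and between two consecutive critical points it is strictly monotone; at each critical point the sense of monotonicity switches, so along $S^1$ the local maxima and local minima of $\theta$ alternate. Therefore $\theta$ has equally many local maxima and local minima, and the total number of its strict local extrema — that is, the number of inflexion points of $\C$ — is even.

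The step I expect to be the main obstacle is the local computation at a cusp in the second paragraph: one must verify carefully that the tangent line function is of class $C^\infty$ (or at least $C^1$) across an ordinary cusp with nonvanishing derivative, so that cusps neither introduce spurious local extrema of $\theta$ nor interrupt the alternation of maxima and minima. Everything else is a verbatim repetition of the regular-curve situation already handled in Section \ref{SectionAlgorithm}; should one wish to allow non-generic higher singularities under the name ``cusp'', one would additionally have to check that the tangent line still turns monotonically through them.
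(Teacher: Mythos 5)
Your proof is correct, but it handles the cusps by a genuinely different device than the paper. The paper's proof is a surgery argument: near each cusp it replaces $\C$ by a regular arc as in Fig.~\ref{CuspInflPointsLocal}, which creates exactly two new inflexion points and hence preserves the parity of their number, and then invokes Lemma~\ref{LemmaAlgEvenNumOfInfl} for the resulting regular closed curve $\widetilde{C}$. You instead leave the curve alone and show that the tangent-line map $\theta\colon S^1\to\mathbb{R}P^1$ is smooth with nonvanishing derivative across an ordinary cusp; your normal-form computation is right, since $f'(t)=t\,(2,3t)+\dots$ has projective direction depending smoothly on $t$ and turning at nonzero speed, so cusps contribute no extrema of $\theta$, and the alternation of maxima and minima of a circle-valued map gives the evenness directly. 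Both arguments ultimately rest on the same parity fact for extrema of maps $S^1\to S^1$; yours has the merit of explaining \emph{why} the cusp is invisible to the tangent-line function (which is also the hidden reason the paper's local modification changes the inflexion count by exactly two), while the paper's version is shorter because it black-boxes the local picture into a figure. One caveat common to both proofs: the conclusion needs all inflexion points to be ordinary (curvature changing sign, i.e.\ nondegenerate extrema of $\theta$), which is supplied by the genericity assumptions running through the section but is used implicitly when you identify inflexion points with strict local extrema of $\theta$.
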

\begin{proof}

If $\C$ is regular, i.e. has no cusp singularities, then by Lemma \ref{LemmaAlgEvenNumOfInfl} we get that $\C$ has an even number of inflexion points.
If $\C$ has cusp singularities, then we change $\C$ nearby each cusp in the way illustrated in Fig. \ref{CuspInflPointsLocal} creating two more inflexion points. After this transformation of $\C$ we obtain a regular closed curve $\widetilde{C}$ such that the parity of the numbers of inflexion points of $\widetilde{C}$ and $C$ are equal. Therefore the number of inflexion points of $\C$ is even.

\begin{figure}[h]
\centering
\includegraphics[scale=0.4]{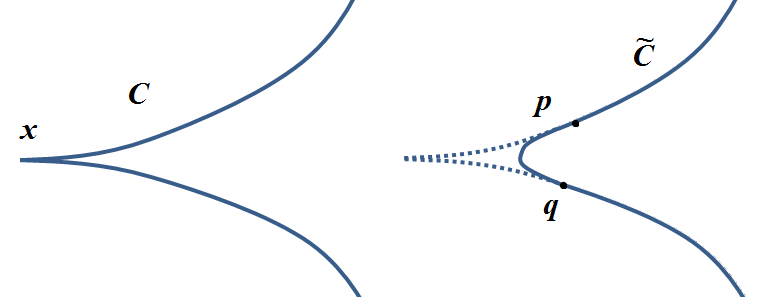}
\caption{A curve $\C$ with the cusp singularity at $x$ and a curve $\widetilde{C}$ with inflexion points at $p$ and $q$}
\label{CuspInflPointsLocal}
\end{figure}

\end{proof}

\begin{prop}
Let $\M$ be a generic regular closed curve. Then the number of inflexion points of each smooth branch of the Wigner caustic of $\M$ is even.
\end{prop}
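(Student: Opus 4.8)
The plan is to reduce the statement to Lemma \ref{LemEvenNumOfInflOfClosedWithCusp} by checking that, for a generic $\lambda$, every smooth branch of $\Eq_{\lambda}(\M)$ is a \emph{closed} curve with \emph{at most cusp singularities}. Since genericity of $\lambda$ excludes the value $\tfrac12$ (the Wigner caustic having been handled separately in Theorem \ref{ThmEvenNumberOnShell} and its corollary together with Lemma \ref{LemEvenNumOfInflOfClosedWithCusp} for its closed branches), we may assume $\lambda\neq 0,\tfrac12,1$. For a generic such $\lambda$ the set $\Eq_{\lambda}(\M)$ has only cusp singularities --- exactly the fact already used in the proof of Proposition \ref{PropAlgEvenNoOfCuspAndInfl} --- so each branch $\C$ of $\Eq_{\lambda}(\M)$ is a smooth parametrized curve whose image has at worst ordinary cusps. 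By Proposition \ref{PropAlgPartBetweenIflPt}(ii) every such branch is moreover a closed curve; in particular the $n$ distinguished branches meeting the inflexion points of $\M$ are closed as well.

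Thus each branch $\C$ satisfies the hypotheses of Lemma \ref{LemEvenNumOfInflOfClosedWithCusp} and hence has an even number of inflexion points, which is the assertion. The one point that deserves a line of checking is the behaviour of $\C$ at an inflexion point $p=f(s_0)$ of $\M$ lying on $\C$: writing the local model $f(s)=(s,as^3+\cdots)$ and parameterizing the branch by $s\mapsto\lambda f(s)+(1-\lambda)f(t(s))$ with $t(s_0)=s_0$ and $t'(s_0)=-1$, the velocity at $s_0$ equals $(2\lambda-1)\bigl(1,F'(s_0)\bigr)$, which is nonzero for $\lambda\neq\tfrac12$; hence $\C$ is $C^\infty$ at $p$ and has an inflexion point (not a singular point) there, consistently with Proposition \ref{PropInflOfEq} and Remark \ref{RemInflPoint}, so $p$ is correctly tallied in the parity argument underlying Lemma \ref{LemEvenNumOfInflOfClosedWithCusp}.

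The proof is therefore short; its only content is the combination of Proposition \ref{PropAlgPartBetweenIflPt}(ii) (branches are closed for generic $\lambda$), the genericity statement that $\Eq_{\lambda}(\M)$ has at most cusps, and Lemma \ref{LemEvenNumOfInflOfClosedWithCusp}. The main obstacle, such as it is, is purely bookkeeping at the inflexion points of $\M$ --- ensuring a branch through such a point is smooth there and that the point is counted as an inflexion point rather than as a singularity --- which the local computation above resolves; one should also state explicitly that ``generic $\lambda$'' here means $\lambda$ outside $\{0,\tfrac12,1\}$ together with the finitely many values at which $\Eq_{\lambda}(\M)$ develops a singularity worse than an ordinary cusp.
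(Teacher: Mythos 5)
Your proposal is correct and follows essentially the same route as the paper: the paper's proof likewise observes that all branches other than the Wigner-caustic branches joining inflexion points are closed curves with at most cusp singularities, applies Lemma \ref{LemEvenNumOfInflOfClosedWithCusp} to those, and invokes Theorem \ref{ThmEvenNumberOnShell} for the remaining (on-shell) branches. Your additional local computation showing that a branch with $\lambda\neq\tfrac12$ is regular at an inflexion point of $\M$ (velocity $(2\lambda-1)f'(s_0)\neq 0$) is a worthwhile verification that the paper leaves implicit.
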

\begin{proof}
Let us notice that all branches of $\Eq_{\frac{1}{2}}(\M)$ except the branches of the Wigner caustic which connect two inflexion points of  $\M$ are closed curves. So the result for these branches follows from Lemma \ref{LemEvenNumOfInflOfClosedWithCusp}. Otherwise it follows from Theorem \ref{ThmEvenNumberOnShell}.

\end{proof}

\section{The Wigner caustic of closed curves with at most $2$ inflexion points}\label{SectionRosettes}

In this section we study the geometry of the Wigner caustic of closed regular curves with non-vanishing curvature (\textit{rosettes}) and of closed regular curves with exactly two inflexion points.

\begin{defn}
A smooth curve $\gamma: (s_1,s_2)\to\mathbb{R}^2$ is called a \textit{loop} if it is a simple curve with non-vanishing curvature such that $\displaystyle\lim_{s\to s_1^+}\gamma(s)=\lim_{s\to s_2^-}\gamma(s)$. A loop $\gamma$ is called \textit{convex} if the absolute value of its rotation number is not greater than $1$, otherwise it is called \textit{non-convex}.
\end{defn}

\begin{figure}[h]
\centering
\includegraphics[scale=0.27]{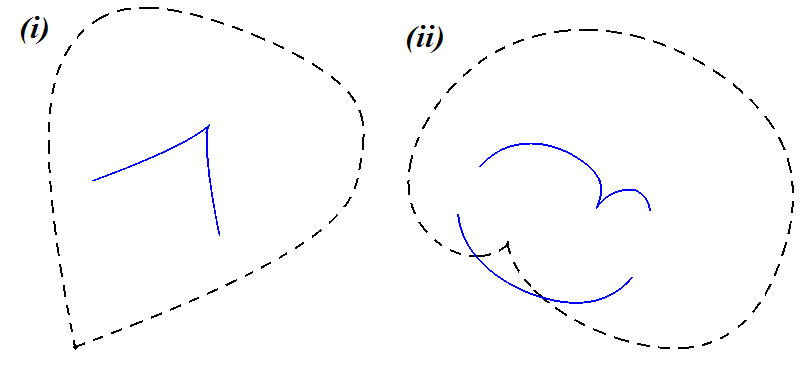}
\caption{(i) A convex loop $L$ (the dashed line) and $\Eq_{\frac{1}{2}}(L)$, (ii) a non-convex loop $L$ (the dashed line) and $\Eq_{\frac{1}{2}}(L)$}
\label{PictureLoops}
\end{figure}

We illustrate examples of loops in Fig. \ref{PictureLoops}.

\begin{thm}[\cite{DZ-singular}]\label{CorWCLoop}
The Wigner caustic of a loop has a singular point.
\end{thm}

\begin{thm}\label{ThmConvex}
Let $C_{n}$ be a generic regular closed parameterized curve with non-vanishing curvature with rotation number equal to $n$. Then
\begin{enumerate}[(i)]
\item the number of smooth branches of $\Eq_{\frac{1}{2}}(C_n)$ is equal to $n$,
\item at least $\displaystyle\left\lfloor\frac{n}{2}\right\rfloor$ branches of $\Eq_{\frac{1}{2}}(C_n)$ are regular closed parameterized curves with non-vanishing curvature,
\item $n-1$ branches of $\Eq_{\frac{1}{2}}(C_n)$ have a rotation number equal to $n$ and one branch has a rotation number equal to $\frac{n}{2}$,
\item every smooth branch of $\Eq_{\frac{1}{2}}(C_n)$ has an even number of cusps if $n$ is even,
\item exactly one branch of $\Eq_{\frac{1}{2}}(C_n)$ has an odd number of cusps if $n$ is odd,
\item cusps of $\Eq_{\frac{1}{2}}(C_n)$ created from loops of $C_n$ are in the same smooth branch of $\Eq_{\frac{1}{2}}(C_n)$,
\item the total number of cusps of $\Eq_{\frac{1}{2}}(C_n)$ is not smaller than $2$,
\end{enumerate}
\end{thm}

\begin{proof}
Since the rotation number of $C_n$ is $n$, for any point $a$ in $C_n$ there exist exactly $2n-1$ points $b\neq a$ such that $a,b$ is a parallal pair of $C_n$. Thus the set of parallel arcs has the following form
$$\Phi_0=\left\{\overarc{\p_0}{\p_1}, \overarc{\p_1}{\p_2}, \ldots, \overarc{\p_{2n-2}}{\p_{2n-1}}, \overarc{\p_{2n-1}}{\p_0}\right\}.$$

Let $\Eq_{\frac{1}{2},k}(C_n)$ be a smooth branch of $\Eq_{\frac{1}{2}}(C_n)$. We can create the following maximal glueing schemes.
\begin{itemize}
\item A maximal glueing scheme of $\Eq_{\frac{1}{2},k}(C_n)$ for $k\in\{1,2,\ldots,n-1\}$:
$$\begin{array}{ccccccccccccc}
\p_0	&\frown&	\p_1	&\frown&	\p_2	&\frown&	\ldots 	&\frown&	\p_{2n-2}	&\frown&	\p_{2n-1} 	&\frown&	\p_0 	\\ \hline 
\p_k	&\frown&	\p_{k+1}	&\frown&	\p_{k+2}	&\frown&	\ldots 	&\frown&	\p_{k-2}	&\frown&	\p_{k-1}	&\frown&	\p_k	\\ \hline
\end{array}.$$
\item A maximal glueing scheme of $\Eq_{\frac{1}{2},n}(C_n)$:
$$\begin{array}{ccccccccccc} 
\p_0	&\frown&	\p_1	&\frown&	\p_2	&\frown&	\ldots 	&\frown&	  \p_{n-1} 	&\frown& \p_n \\ \hline
\p_n	&\frown&	\p_{n+1} &\frown&	\p_{n+2} &\frown&	\ldots 	&\frown&	\p_{2n-1} 	&\frown& \p_0 \\ \hline
\end{array}.$$
\end{itemize}

The total number of arcs of the glueing schemes for the Wigner caustic presented above is $n(2n-1)$. By Proposition \ref{PropNumDiffArcs} the total number of different arcs of the Wigner caustic is equal to the same number. Thus there is no more maximal glueing schemes for the Wigner caustic of $C_n$. 

If $(a_0, a_1, \ldots, a_{2n-1})$ is a sequence of points in $C_n$ with the order compatible with the orientation of $C_n$ such that $a_i, a_j$ is a parallel pair, then $C_n$ is curved in the same side at $a_i$ and $a_j$ if and only if $i-j$ is even. Thus branches $\Eq_{\frac{1}{2},2}(C_n), \Eq_{\frac{1}{2},4}(C_n), \ldots, \Eq_{\frac{1}{2},2\cdot\left\lfloor\frac{n}{2}\right\rfloor}(C_n)$ are created from parallel pairs $a,b$ in $C_n$ such that $C_n$ is curved in the same side at $a$ and $b$ and all the other branches of the Wigner caustic of $C_n$ are created from parallel pairs $a,b$ in $C_n$ such that $C_n$ is curved in the different sides at $a$ and $b$. By Corollaries \ref{PropRegularPointOfEq} and \ref{PropNumOfInflPoints} branches $\Eq_{\frac{1}{2},2}(C_n), \Eq_{\frac{1}{2},4}(C_n), \ldots, \Eq_{\frac{1}{2},2\cdot\left\lfloor\frac{n}{2}\right\rfloor}(C_n)$ are regular closed parameterized curves with non-vanishing curvature. 

By Proposition \ref{PropAlgParityOfCuspsInBranch} the branch $\Eq_{\frac{1}{2},n}(C_n)$ is the only branch of the Wigner caustic of $C_n$ which has an odd number of cusps if $n$ is odd.

We can see that the part of the Wigner caustic created from loops of $C_n$ are all in $\Eq_{\frac{1}{2}, 1}(C_n)$. Every $C_n$ for $n>1$ has at least one loop, so $\Eq_{\frac{1}{2}, 1}(C_n)$ has at least one cusp, but because $\Eq_{\frac{1}{2}, 1}(C_n)$ has an even number of cusps, then $\Eq_{\frac{1}{2}, 1}(C_n)$ has at least two cusps.

\end{proof}

\begin{figure}[h]
\centering
\includegraphics[scale=0.28]{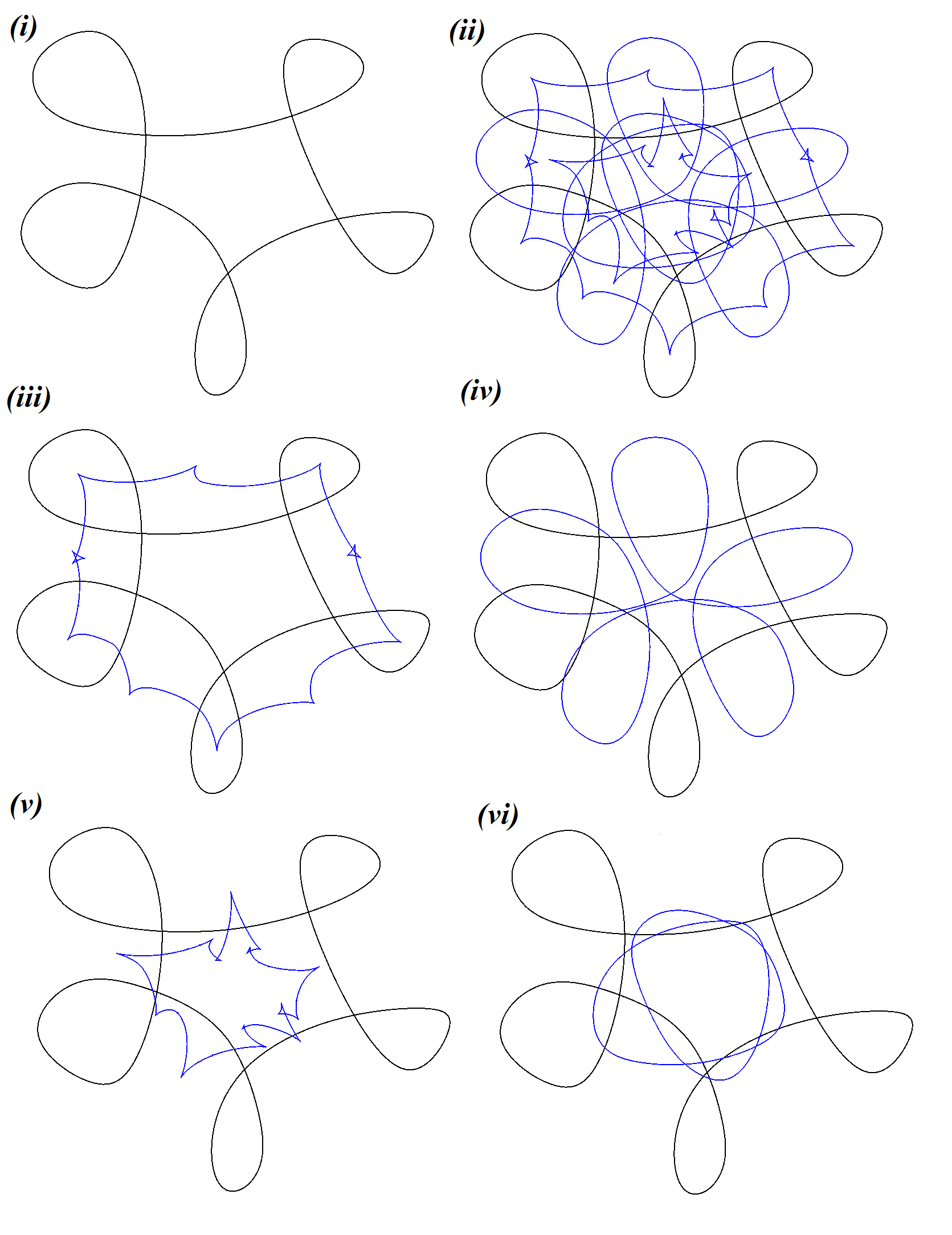}
\caption{(i) A curve $C_4$, (ii) $\Eq_{\frac{1}{2}}(C_4)$, (iii-vi) $C_4$ and different smooth branches of $\Eq_{\frac{1}{2}}(C_4)$}
\label{PictureThmConvex}
\end{figure}

In Fig. \ref{PictureThmConvex}(i) we illustrate a curve of the type $C_4$ and $\Eq_{\frac{1}{2}}(C_4)$. In Fig. \ref{PictureThmConvex}(iii-vi) we illustrate different smooth branches of $\Eq_{\frac{1}{2}}(C_4)$.

\begin{thm}
Let $W_n$ be a generic closed curve with the rotation number $n$. Let $W_n$ has exactly two inflexion points such that one of the arcs of $W_n$ connecting inflexion points is an embedded curve with the absolute value of the rotation number smaller than $\frac{1}{2}$. Then
\begin{enumerate}[(i)]
\item the number of smooth branches of $\Eq_{\frac{1}{2}}(W_n)$ is equal to $n+1$,
\item $n-1$ branches of $\Eq_{\frac{1}{2}}(W_n)$ have a rotation number equal to $n$ and one branch has a rotation number equal to $\frac{n}{2}$,
\item $n-1$ branches of $\Eq_{\frac{1}{2}}(W_n)$ have four inflexion points and two branches has two inflexion points,
\item every smooth branch of $\Eq_{\frac{1}{2}}(W_n)$, except a branch connecting inflexion points of $W_n$, has an even number of cusps if $n$ is even,
\item exactly one smooth branch of $\Eq_{\frac{1}{2}}(W_n)$, except a branch connecting inflexion points of $W_n$, has an odd number of cusps if $n$ is odd,
\item cusps of $\Eq_{\frac{1}{2}}(W_n)$ created from convex loops of $W_n$ are in the same smooth branch of $\Eq_{\frac{1}{2}}(W_n)$.
\end{enumerate}
\end{thm}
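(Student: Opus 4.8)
The plan is to run the algorithm of Section~\ref{SectionAlgorithm} on $W_n$ in the same way as in the proof of Theorem~\ref{ThmConvex}, and then to read off (i)--(ix) from the resulting list of maximal glueing schemes, using Propositions~\ref{PropAlgEvenNoOfCuspAndInfl} and \ref{PropAlgParityOfCuspsInBranch}, Corollaries~\ref{PropRegularPointOfEq}, \ref{CorAlgNumberOfInflInEachComponent} and \ref{PropNumOfInflPoints}, Theorem~\ref{ThmEvenNumberOnShell} together with the corollary immediately after it, and Theorem~\ref{PropCuspsEq}. First I would analyse the angle function. By Proposition~\ref{PropAlgAngleFun}(iii)--(iv) the function $\varphi_{W_n}$ has exactly two local extrema, a minimum at one inflexion point of $W_n$ and a maximum at the other; denote their values $\alpha<\beta$. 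The embedded arc $\gamma_0$ joining the inflexion points whose rotation number has absolute value less than $\frac{1}{2}$ is mapped by $\varphi_{W_n}$ monotonically and bijectively onto the arc of $S^1=\varphi_{W_n}(S^1)$ joining $\alpha$ and $\beta$ directly, and, since its interior meets no point of $\varphi_{W_n}^{-1}(\{\alpha,\beta\})$, the whole of $\gamma_0$ is a single basic arc; on the complementary arc the function $\varphi_{W_n}$ is monotone in the lifted sense and sweeps $S^1$ exactly $2n$ further times before covering, once more, the arc covered by $\gamma_0$. Counting preimages of $\alpha$ and of $\beta$ gives $\#\mathcal{S}_{W_n}=2m=4n+2$, and the $2m$ basic arcs split as $\#\Phi_0=2n+2$ (those whose $\varphi_{W_n}$--image is the arc joining $\alpha$ and $\beta$ directly, one of which is $\gamma_0$) and $\#\Phi_1=2n$.

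Next, exactly as for $C_n$, I would write down explicitly $n+1$ maximal glueing schemes for $\Eq_{\frac{1}{2}}(W_n)$ and $2n$ maximal glueing schemes for $\Eq_{\lambda}(W_n)$, $\lambda\neq 0,\frac{1}{2},1$. Two of them --- one on-shell branch when $\lambda=\frac{1}{2}$ and one when $\lambda\neq\frac{1}{2}$ --- pass through both inflexion points, hence are of the symmetric type \eqref{SymGS3} with the forced turning piece $\begin{array}{c}\p_r\\ \hline\p_r\\ \hline\end{array}$; the others are of the closed type of Lemma~\ref{LemPropMaxGlueSchemes}(ii). Using Remark~\ref{RemInflexionInScheme} and Proposition~\ref{PropAlgAlwaysGoFurhter} one checks that each scheme is well formed and maximal, and that the total number of arcs they use is $\sum_i\binom{\#\Phi_i}{2}=4n^2+2n$ for $\lambda=\frac{1}{2}$ (respectively $2\sum_i\binom{\#\Phi_i}{2}$ otherwise), which by Proposition~\ref{PropNumDiffArcs} is the number of arcs available; hence the list is complete. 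By Theorem~\ref{ThmGlueSchemeIsBranch} this gives (i) and (vii), and isolates the on-shell branches figuring in (iii), (viii), (ix), whose existence is also ensured by Proposition~\ref{PropAlgPartBetweenIflPt} and Lemma~\ref{LemPropMaxGlueSchemes}(iii)--(iv). I expect this combinatorial bookkeeping --- organising the $n+1$ (resp. $2n$) schemes in the presence of the two inflexion points, and checking that they consume exactly $\sum_i\binom{\#\Phi_i}{2}$ arcs so that no branch is missed --- to be the main obstacle; once it is settled, the rest is a transcription of the arguments already used for $C_n$.

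Finally I would extract the remaining statements. Equipping each branch with the normal field of Definition~\ref{DefNormalVectorEq}, its rotation number (Definition~\ref{DefRotationNumber}) equals the rotation number $n$ of $W_n$ unless the scheme is the $\lambda=\frac{1}{2}$ scheme whose two ends carry opposite normal vectors, in which case it equals $\frac{n}{2}$ (the on-shell branch for $\lambda=\frac{1}{2}$ is an arc and is not counted here); exactly one closed branch of $\Eq_{\frac{1}{2}}(W_n)$ is of this kind, as for $C_n$, which proves (ii) and (viii). Proposition~\ref{PropAlgEvenNoOfCuspAndInfl} gives (iv) for $\lambda\neq\frac{1}{2}$, and Proposition~\ref{PropAlgParityOfCuspsInBranch} shows that among the branches of $\Eq_{\frac{1}{2}}(W_n)$ only the half--covering branch can have an odd number of cusps, and does so precisely when $n$ is odd, which is (v). For inflexion points, Corollary~\ref{CorAlgNumberOfInflInEachComponent} reduces the count on a branch to the number of parallel pairs of its scheme containing an inflexion point of $W_n$ (excluding the two ends of the on-shell $\lambda=\frac{1}{2}$ scheme); since $W_n$ has just two inflexion points, each lying on exactly two basic arcs, a short bookkeeping --- or, alternatively, Corollary~\ref{PropNumOfInflPoints} (total $4n$, resp. $8n+2$, inflexion points) together with Theorem~\ref{ThmEvenNumberOnShell} and the corollary after it (the on-shell branch has $2$ inflexion points for $\lambda=\frac{1}{2}$ and $6$ for $\lambda\neq\frac{1}{2}$) --- forces the distributions in (iii) and (ix). The claims on non-vanishing curvature contained in (ii)--(iii) and (viii)--(ix) follow from Corollary~\ref{PropRegularPointOfEq} applied to the parallel pairs of each scheme, as for $C_n$. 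Lastly, for (vi), the cusps of $\Eq_{\frac{1}{2}}(W_n)$ produced by a convex loop of $W_n$ arise from parallel pairs lying on that loop, and following such pairs through the schemes, verbatim as in the proof of Theorem~\ref{ThmConvex}(vi), shows they all lie in one and the same branch.
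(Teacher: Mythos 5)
Your proposal is correct and follows essentially the same route as the paper: determine the angle function of $W_n$, deduce $\#\mathcal{S}_{W_n}=4n+2$ and the split of the parallel arcs into two sets of sizes $2n$ and $2n+2$ (exactly what the paper's proof records), then write down the maximal glueing schemes as in Theorem \ref{ThmConvex} and read off (i)--(ix) from the general results of Sections \ref{SectionAlgorithm} and \ref{SectionOnShellGeometryOfConvex}. The only defect is an arithmetic slip in your completeness check: $\sum_i\binom{\#\Phi_i}{2}=\binom{2n+2}{2}+\binom{2n}{2}=4n^2+2n+1$, not $4n^2+2n$ (for $n=1$ this is the $7$ arcs of the worked example in Subsection \ref{SectionExample}), and the $n+1$ schemes do consume exactly that many arcs.
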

\begin{proof} 

\begin{figure}[h]
\centering
\includegraphics[scale=0.30]{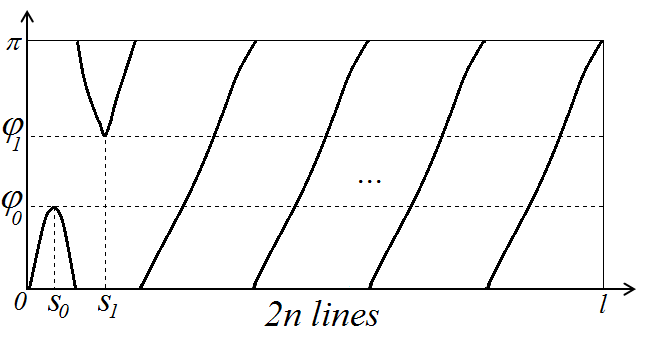}
\caption{An angle function of $W_n$}
\label{PictureAlgWnExample}
\end{figure}

One can notice that the graph of the angle function $\varphi_{W_n}$ has the form presented in Fig. \ref{PictureAlgWnExample}. For that parameterization we get that $f(s_0)$ and $f(s_1)$ corresponds to inflexion points of $W_{n}$ and the sets of the parallel arcs are as follows:
\begin{align*}
\Phi_0 &=\left\{\overarc{\p_2}{\p_3}, \overarc{\p_4}{\p_5}, \overarc{\p_6}{\p_7}, \overarc{\p_8}{\p_9}, \ldots, \overarc{\p_{4n-2}}{\p_{4n-1}}, \overarc{\p_{4n}}{\p_{4n+1}}\right\},\\
\Phi_1 &=\left\{\overarc{\p_0}{\p_1}, \overarc{\p_1}{\p_2}, \overarc{\p_3}{\p_4}, \overarc{\p_5}{\p_6}, \ldots, \overarc{\p_{4n-1}}{\p_{4n}}, \overarc{\p_{4n+1}}{\p_0}\right\}.
\end{align*}

We proceed in the same way like in the proof of Theorem \ref{ThmConvex}.

\end{proof}

An example of a curve $W_1$ and its Wigner caustic are illustrated in Fig. \ref{PictureAlgExEasy}.

\section{The Wigner cuastic of whorls}

In \cite{BO} waves with vacuum wavenumber $k$, travelling in the $\xi$ direction, incident normally on a medium that varies periodically and weakly in the $\eta$ direction were studied. This problem describes the diffraction of light by ultrasound and diffraction of beams of atoms by beams of light and dynamics of quantum particle in optical lattice potential (\cite{CPB}).

In natural dimensionless variables $y=\frac{1}{2}q\eta$, $x=q\sqrt{\frac{n_1}{n_0}}\xi$ (for details see \cite{BO}) the rays regarded as curves $\eta(\xi)$ are described in the following way:
\begin{align*}
y(x, t) &=\sin^{-1}\left[\sin t\,\textrm{sn}\left(x+K(\sin^2t)|\sin^2t)\right)\right],\\ 
p(x, t) &=\dfrac{\mathrm{d} y(x; t)}{\mathrm{d}x}=\sin t\,\mathrm{cn}\left(x+K(\sin^2t)|\sin^2t\right),
\end{align*}
where  $0\leqslant x<\infty$ and $-\frac{\pi}{2}\leqslant t\leqslant\frac{\pi}{2}$, $K(m)$ is the elliptic function, $\textrm{sn}(n|m)$ and $\textrm{cn}(n|m)$ are Jacobi's elliptic sine and Jacobi's cosine functions, respectively. In Figure \ref{PictureWhorlSurface} we illustrate a surface parameterized by
\begin{align}
\label{eq:whorl_surface}\left[0,\dfrac{3\pi}{2}\right]\times\left[-\dfrac{\pi}{2},\dfrac{\pi}{2}\right]\ni (x,t)\mapsto\big(x, y(x, t), p(x, t)\big)\in\mathbb{R}^3.
\end{align}

\begin{figure}[h]
\centering
\includegraphics[scale=0.295]{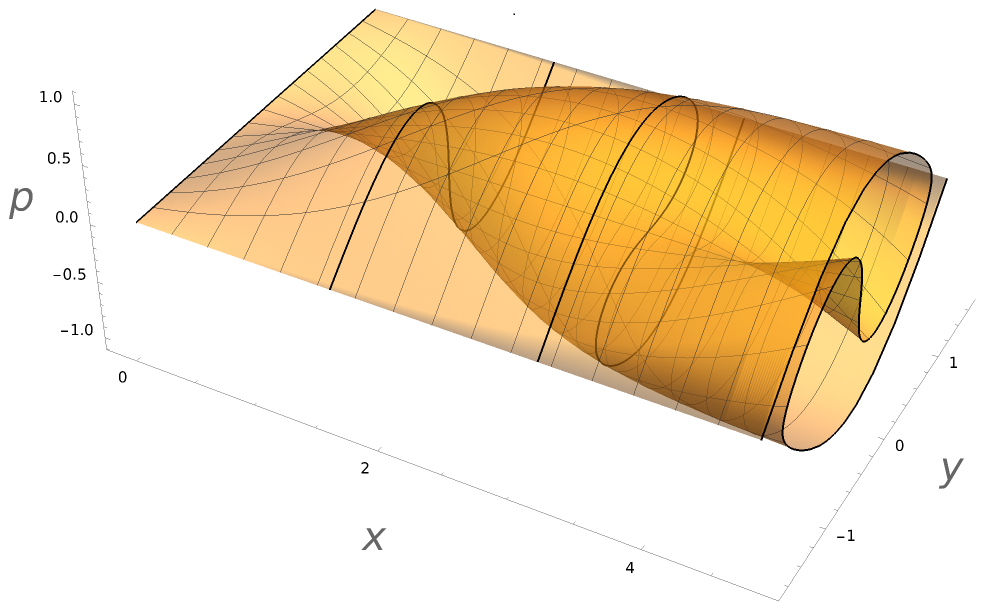}
\includegraphics[scale=0.295]{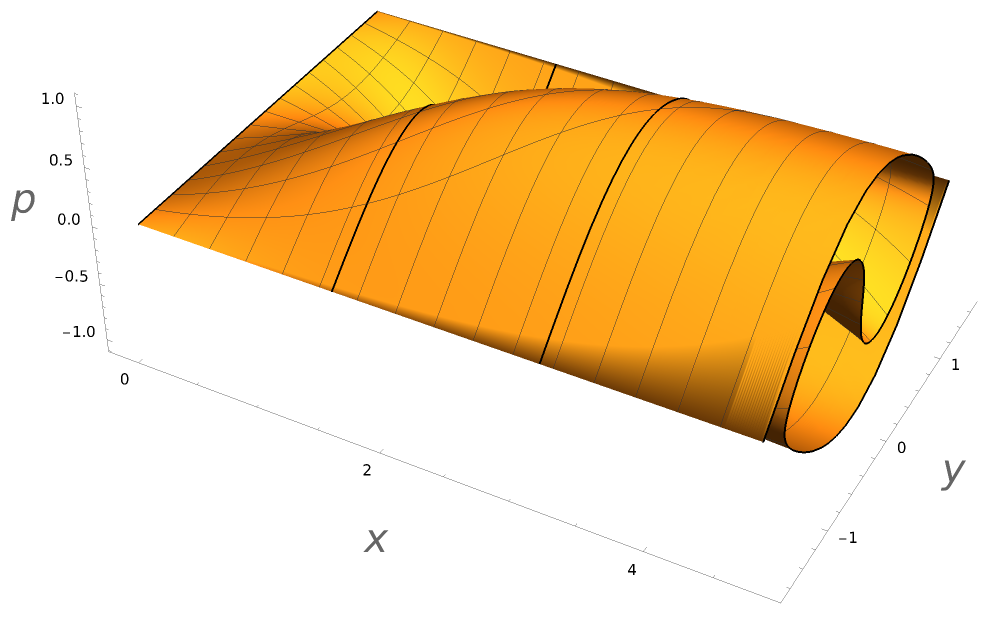}
\caption{The surface parameterized by \eqref{eq:whorl_surface} with different opacities}
\label{PictureWhorlSurface}
\end{figure}

\noindent For a fixed value of $x$ in \eqref{eq:whorl_surface} we obtain so-called \textit{whorl} (\cite{BO}) or \textit{rainbow diagram} (\cite{CPB}) -- see Figure \ref{fig:PictureWorls}.

Catastrophic manifolds of the semiclassical Wigner catastrophes are formed by the Wigner caustic of a fixed whorl and by the whorl by itself (\cite{CPB}). It is worth mentioning that by its construction (\cite{BO}), whorls are $\pi$-periodic in the $y$-value (see Figure \ref{fig:Picture_periodic_whorl}).

We illustrate the Wigner caustic of the periodic whorl from Figure \ref{fig:Picture_periodic_whorl} in Figure \ref{fig:Picture_periodic_whorl_wc}. Notice that the centers of symmetry of the $\pi$-whorl belong to the Wigner caustic.

Now, we explain why the Wigner caustic of the whorl for $x=\pi$ has singular points. We apply a result on existence of singular points of the Wigner caustic (\cite{DZ-singular}).

\pagebreak

\begin{figure}[h!]
\centering
\includegraphics[scale=0.165]{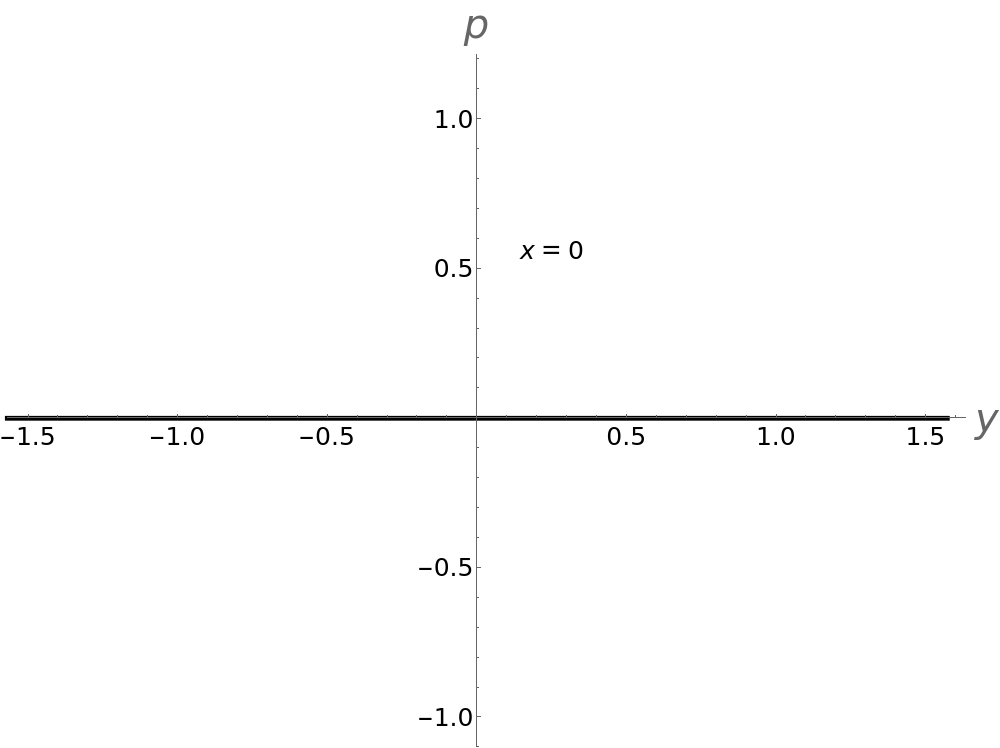}
\includegraphics[scale=0.165]{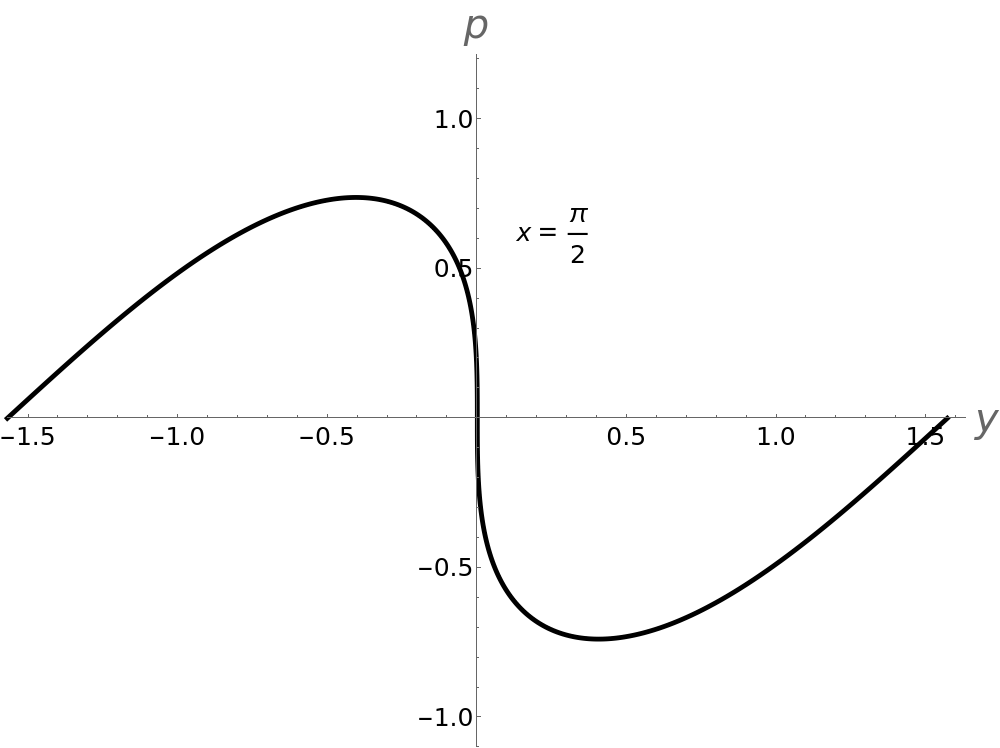}\\
\includegraphics[scale=0.165]{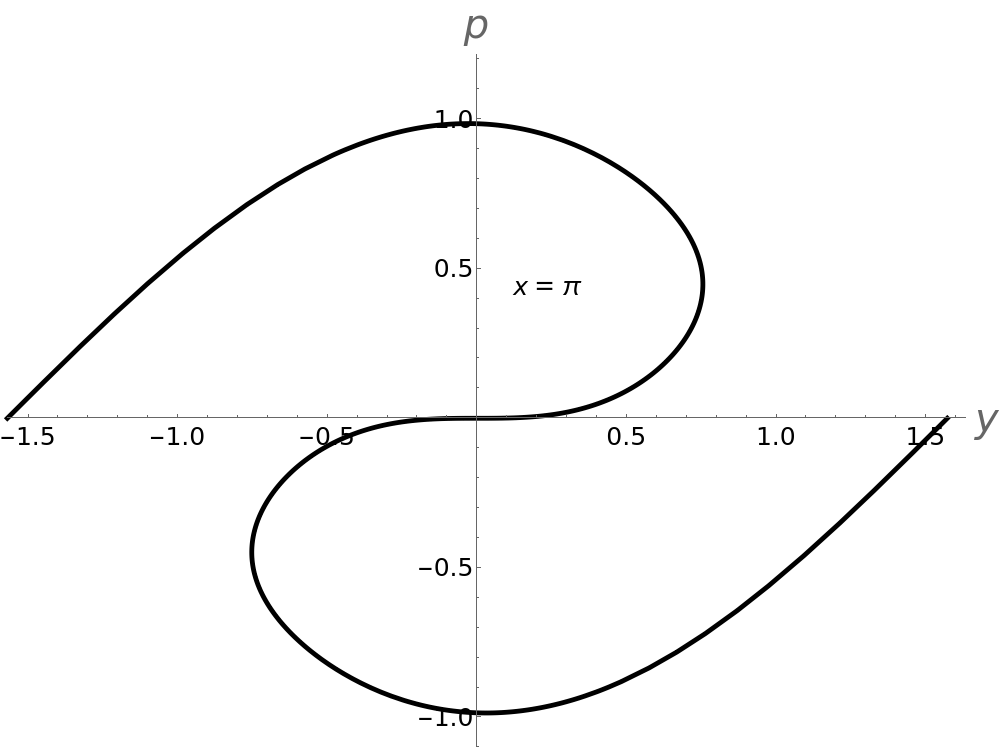}
\includegraphics[scale=0.165]{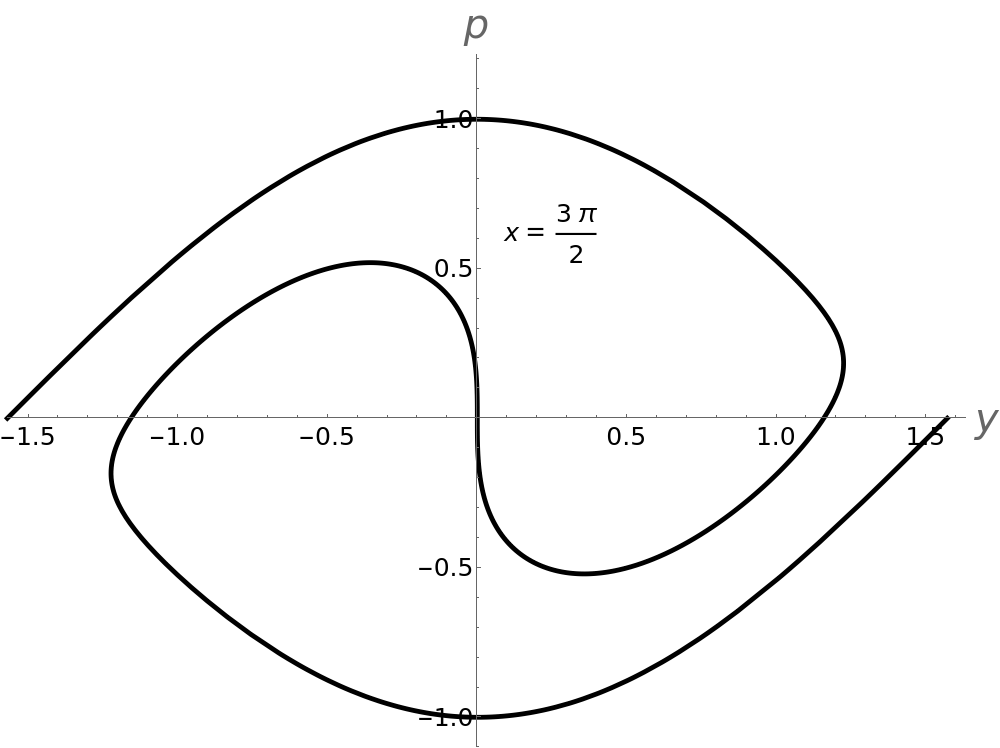}
\caption{Whorls / Rainbow diagrams}
\label{fig:PictureWorls}
\end{figure}

\begin{figure}[h!]
\centering
\includegraphics[width=0.95\textwidth]{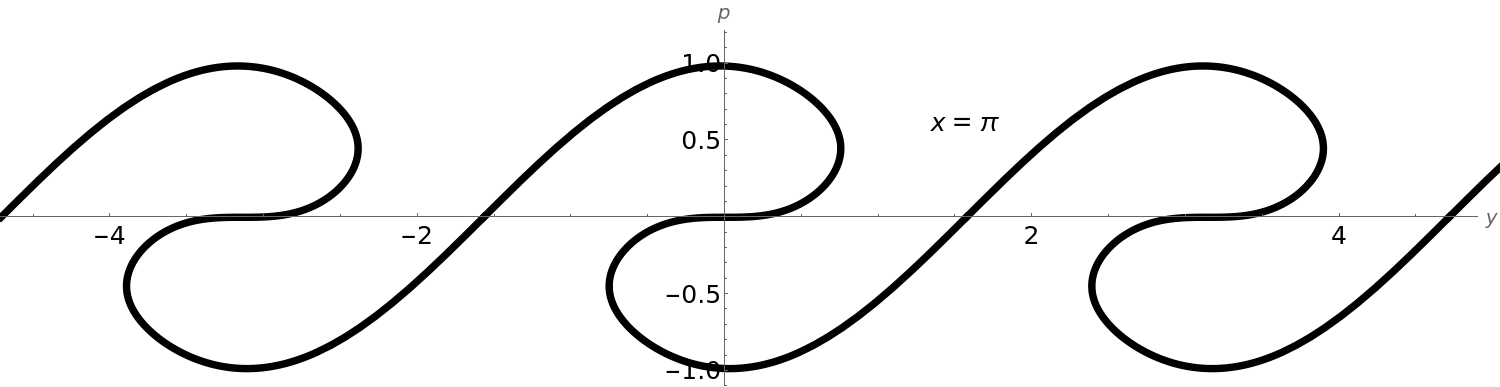}
\caption{The periodic whorl for $x=\pi$}
\label{fig:Picture_periodic_whorl}
\end{figure}

\begin{figure}[h!]
\centering
\includegraphics[width=0.95\textwidth]{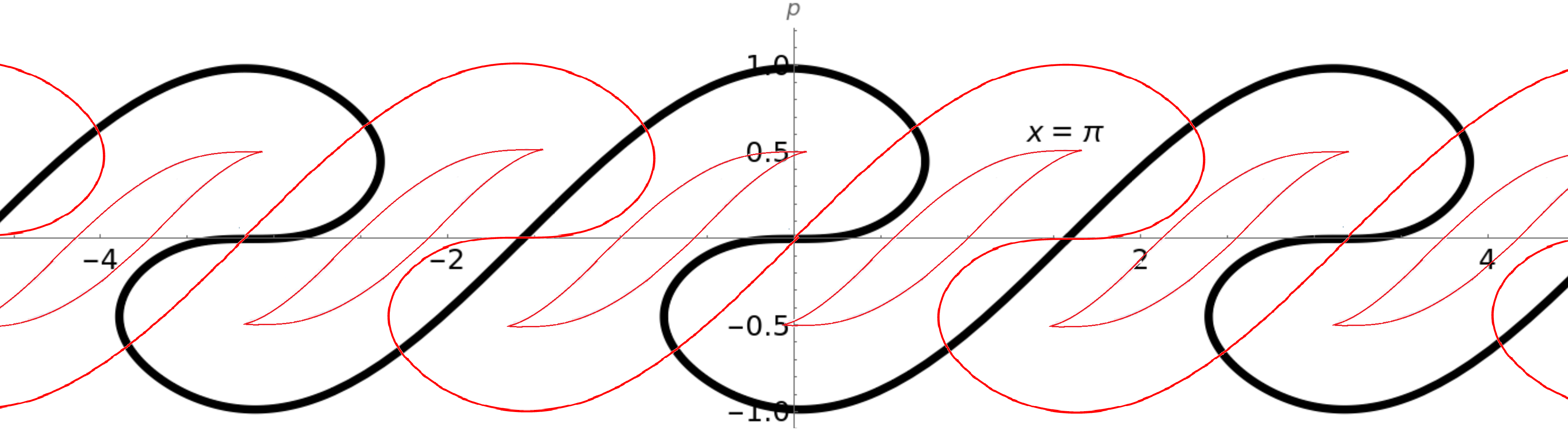}
\caption{The periodic whorl for $x=\pi$ and its Wigner caustic}
\label{fig:Picture_periodic_whorl_wc}
\end{figure}

\pagebreak

\begin{prop}[Proposition 3.7 in \cite{DZ-singular}]\label{prop:SingularWCArcs}
Let $\mathcal{F}_0$ and $\mathcal{F}_1$ be embedded regular curves with endpoints $p$, $q_0$ and $p$, $q_1$, respectively. Let $\ell_0$ be the line through $q_1$ parallel to $T_p\mathcal{F}_0$ and let $\ell_1$ be the line through $q_0$ parallel to $T_p\mathcal{F}_1$. Let $c=\ell_0\cap\ell_1$, $b_0=\ell_0\cap T_p\mathcal{F}_1$, $b_1=\ell_1\cap T_p\mathcal{F}_0$. Let us assume that
\begin{enumerate}[(i)]
\item $T_p\mathcal{F}_0\| T_{q_1}\mathcal{F}_1$ and $T_{q_0}\mathcal{F}_0\| T_p\mathcal{F}_1$,
\item the curvature of $\mathcal{F}_i$ for $i=0,1$ does not vanish at any point,
\item absolute values of rotation numbers of $\mathcal{F}_0$ and $\mathcal{F}_1$ are the same and smaller than $\frac{1}{2}$,
\item for every point $a_i$ in $\mathcal{F}_i$ there is exactly one point $a_j$ in $\mathcal{F}_j$ such that $a_i,a_j$ is a parallel pair for $i\neq j$,
\item $\mathcal{F}_0$, $\mathcal{F}_1$ are curved in the different sides at every parallel pair $a_0,a_1$ such that $a_i\in\mathcal{F}_i$ for $i=0,1$.
\end{enumerate}
Let $\rho_{\textrm{max}}$ (respectively $\rho_{\textrm{min}}$) be the maximum (respectively the minimum) of the set $\displaystyle\left\{\frac{c-b_1}{q_1-b_1}, \frac{c-b_0}{q_0-b_0}\right\}$. 
If $\rho_{\textrm{max}}<1$ or $\rho_{\textrm{min}}>1$, then the Wigner caustic of $\mathcal{F}_0\cup\mathcal{F}_1$ has a singular point.
\end{prop}

\begin{figure}[h]
\centering
\includegraphics[scale=0.225]{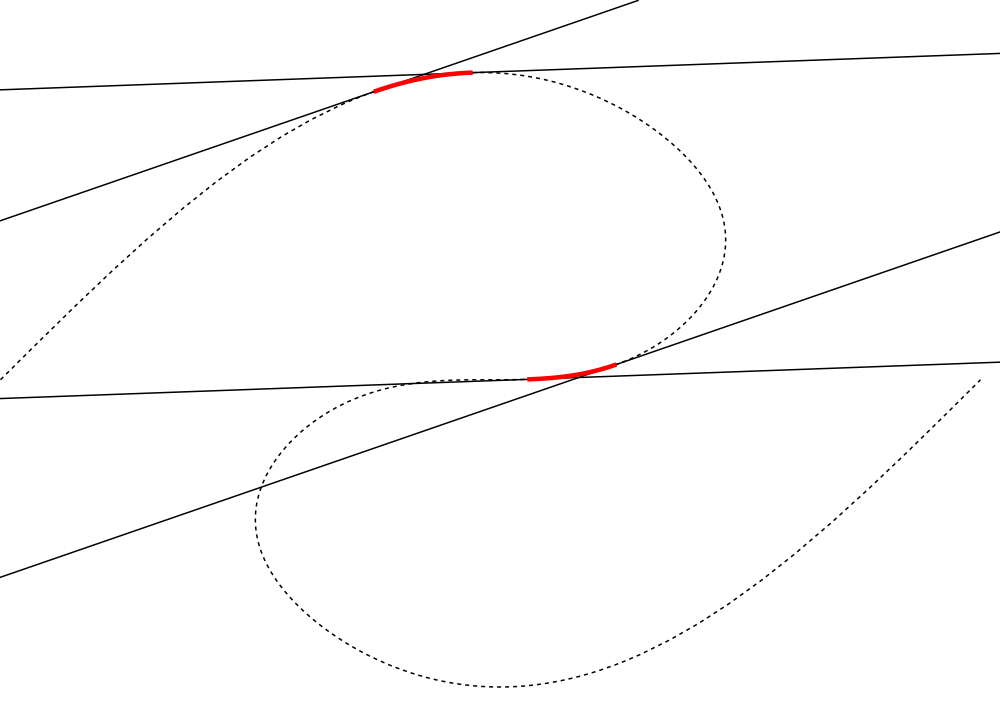}
\caption{The whorl for $x=\pi$ with tangent lines and parallel arcs}
\label{fig:FigureWhorlArcs}
\end{figure}

\begin{figure}[h]
\centering
\includegraphics[scale=0.225]{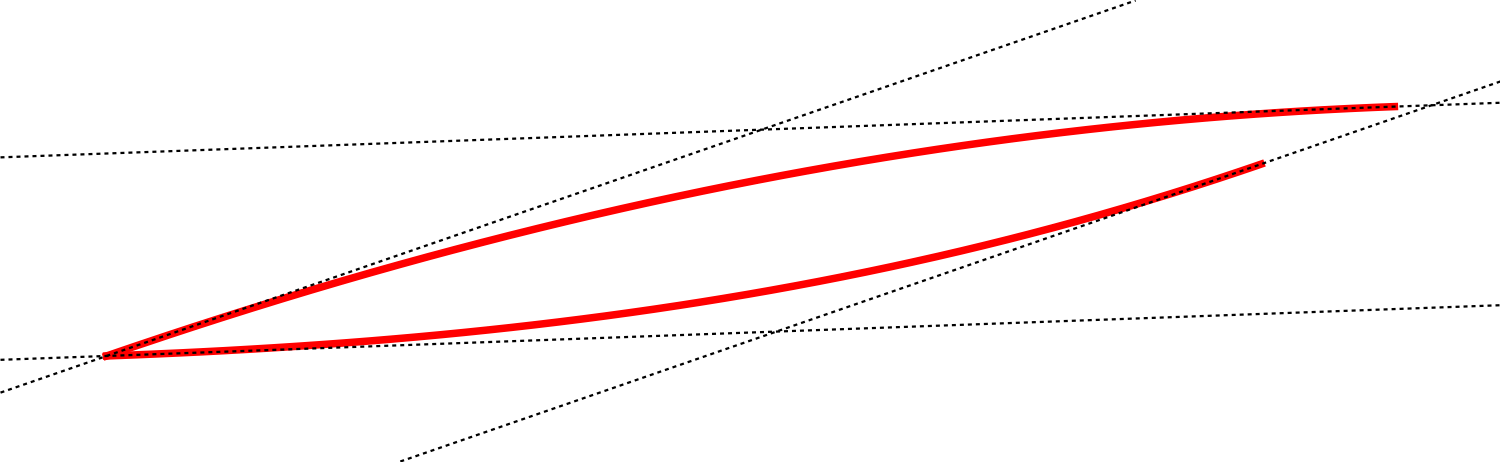}
\caption{Translated parallel arcs from Figure \ref{fig:FigureWhorlArcs}}
\label{fig:FigureWhorlArcsTranslated}
\end{figure}

In Figure \ref{fig:FigureWhorlArcs} we present a $\pi$-whorl with tangent lines for parameters: $t=-0.125$, $t\approx -1.40562$, $t=-0.4$, $t\approx -1.4511$, together with parallel arcs with endpoints at these points. In Figure \ref{fig:FigureWhorlArcsTranslated} we illustrate translated parallel arcs from Figure \ref{fig:FigureWhorlArcs}, which fulfil assumptions of Proposition \ref{prop:SingularWCArcs}. Therefore, the Wigner caustic created from parallel arcs in Figure \ref{fig:FigureWhorlArcs} has a singular point. This method can be applied for other whorls, too.

Furthermore, notice that the tangent lines to the $\pi$-whorl at the points $a_0=(0,0)$, $a=(0,1)$, $b=(0,-1)$ are horizontal, and $a_0$ is an inflexion point of the $\pi$-whorl. Hence, by Proposition \ref{PropInflOfEq} the points $\frac{a_0+a}{2}=(0, 0.5)$ and $\frac{a_0+b}{2}=(0, -0.5)$ are inflexion points of the Wigner caustic of the $\pi$-whorl. These points are nearby singular points of the Wigner caustic of the $\pi$-whorl.

For more figures of the whorls and its Wigner caustics see \cite{CPB}.

\section*{Acknowledgements}
The authors benefitted from the hospitality of the Faculty of Mathematics of the University of Valencia during the preparation of this manuscript. Special thanks to their host, M. Carmen Romero Fuster, for suggesting the subject of this paper and many useful comments. The authors also thank Zbigniew Szafraniec for fruitful discussions and suggestions.

\bibliographystyle{amsalpha}

\end{document}